\newtheorem{theorem}{Theorem}%[section]
\newtheorem{corollary}{Corollary}%[section]
\newtheorem{remark}{Remark}%[section]
\def\today{\number\day \space\ifcase\month\or
  January\or February\or March\or April\or May\or June\or
  July\or August\or September\or October\or November\or December\fi
  \space\number\year}
	\newcommand{\notes}[1]{[\textcolor{red}{\emph{#1}}]}
\newcommand{\notes}[1]{}
\newcommand*\patchAmsMathEnvironmentForLineno[1]{%
	\expandafter\let\csname old#1\expandafter\endcsname\csname #1\endcsname
	\expandafter\let\csname oldend#1\expandafter\endcsname\csname end#1\endcsname
	\renewenvironment{#1}%
	{\linenomath\csname old#1\endcsname}%
	{\csname oldend#1\endcsname\endlinenomath}}%
\newcommand*\patchBothAmsMathEnvironmentsForLineno[1]{%
	\patchAmsMathEnvironmentForLineno{#1}%
	\patchAmsMathEnvironmentForLineno{#1*}}%
\begin{document}
%\begin{linenumbers}
\begin{center}  
{\Large \it \bf Bifurcation Thresholds and Optimal Control in Transmission Dynamics of Arboviral Diseases

	%Bifurcation Thresholds in Transmission Dynamics of Arboviral Diseases and Optimal Control
	%Bifurcation thresholds and optimal control of a transmission dynamics model of arboviral diseases that includes some vector control mechanisms, individual protection, treatment and a waning vaccine--induced immunity
}
\end{center}

\smallskip
\begin{center}
{\footnotesize \textsc{Hamadjam Abboubakar}\footnote{Corresponding author. Present Address:  UIT--Department of Computer Science, P. O. Box 455,  Ngaoundere, Cameroon, email: abboubakarhamadjam@yahoo.fr or h.abboubakar@gmail.com, Tel. (+237) 694 52 31 11}$^{,\star,\dagger}$, 
\textsc{Jean C. Kamgang}\footnote{Co-author address: ENSAI, Department of Mathematics and Computer Science, P.O. Box 455,  Ngaoundere, Cameroon, 
	email: jckamgang@gmail.com, Tel. (+237) 697 961 489
	}$^{,\dagger}$, 
%\textsc{Leontine N. Nkamba}\footnote{Co-author address: ENS, Department of Mathematics, P.O. Box 47,  Yaounde, Cameroon, \\
%email: lnkague@gmail.com, Tel. (+237) 661 528 833
%}$^{,\star}$,
\textsc{Daniel Tieudjo}\footnote{Co-author address: ENSAI, Department of Mathematics and Computer Science, P.O. Box 455,  Ngaoundere, Cameroon, 
	email: tieudjo@yahoo.com, Tel. (+237) 677 562 433
	}$^{,\dagger}$
}
\end{center}
\begin{center} 
{\small \sl $^{\star}$ The University of Ngaoundere, UIT, Laboratoire d'Analyse, Simulation et Essai, P. O. Box 455 Ngaoundere, Cameroon.}\\
{\small \sl $^{\dagger}$ The University of Ngaoundere, ENSAI, Laboratoire de Math\'ematiques Exp\'erimentales, P. O. Box 455 Ngaoundere, Cameroon.
}
\end{center}

\bigskip
{\small \centerline{\bf Abstract} 
In this paper, we derive and analyse a model for the control of arboviral diseases which takes into account an imperfect vaccine combined with some other mechanisms of control already studied in the literature. We begin by analyse the basic model without controls. We prove the existence of two disease-free equilibrium points and the possible existence of up to two endemic equilibrium
points (where the disease persists in the population). We show the existence of a transcritical bifurcation and a possible saddle-node bifurcation and explicitly derive threshold conditions for both, including defining the basic reproduction number, $\mathcal R_0$, which determines whether the disease can persist in the population or not.~The epidemiological consequence of saddle-node bifurcation (backward bifurcation) is that the classical requirement of having the reproduction number less than unity, while necessary, is no longer sufficient for disease elimination from the population. It is further shown that in the absence of disease--induced death, the model does not exhibit this phenomenon. We perform the sensitivity analysis to determine the model robustness to parameter values. That is to help us to know the parameters that are most influential in determining disease dynamics. The model is extended by reformulating the model as an optimal control problem, with the use of five time dependent controls, to assess the impact of vaccination combined with treatment, individual protection and vector control strategies (killing adult vectors, reduction of eggs and larvae). By using optimal control theory, we establish optimal conditions under which the disease can be eradicated and we examine the impact of a possible combined control tools on the disease transmission. The Pontryagin's maximum principle is used to characterize the optimal control. Numerical simulations, efficiency analysis and cost effectiveness analysis show that, vaccination combined with other control mechanisms, would reduce the spread of the disease appreciably, and this at low cost.
}
\medskip

\noindent {\bf Keywords}: Arboviral diseases; Bifurcation; Sensitivity analysis; Optimal control; Pontryagin's Maximum Principle; Efficiency analysis, Cost effectiveness analysis.
\medskip

\noindent {\bf AMS Subject Classification (2010)}: 37N25, 49J15, 92D30.

\section{Introduction}
\label{sec:IntAr3opc}

Arboviral diseases are affections transmitted by hematophagous arthropods. There are currently 534 viruses registered in the International Catalog of Arboviruses and 25\% of them have caused documented illness in human populations \cite{ch,ka,gu}. Examples of those kinds of diseases are Dengue, Yellow fever, Saint Louis fever, Encephalitis, West Nile fever and Chikungunya. A wide range of arboviral diseases are transmitted by mosquito bites and constitute a public health emergency of international concern. For example, Dengue, caused by any of four closely-related virus serotypes (DEN-1-4) of the genus Flavivirus, causes 50--100 million infections worldwide every year, and the majority of patients worldwide are children aged 9 to 16 years \cite{Sanofi2013,WHO2006,WHO2009}. 

The dynamics of arboviral diseases like Dengue or Chikungunya are influenced by many factors such as human and mosquito behaviours. The virus itself (multiple serotypes of dengue virus~\cite{WHO2006,WHO2009}, and  multiple strains of chikungunya virus~\cite{moulayThesis,Parola}), as well as the environment, affects directly or indirectly all the present mechanisms of control~\cite{BrasseurThsesis,Carvalho2015}. Indeed, in the absence of conditions which favour the development of their larvae, eggs of certain Aedes mosquitoes (Aedes albopictus, for example) enter in diapause phenomenon, allowing the eggs to hatch even after two years~\cite{Paupy2009,Sota1992}. 
Taking the case of Aedes mosquitoes for example, the main control method used in many countries continues to be space spraying of insecticide for adult mosquito control. This strategy must be repeated constantly, its cost is high, and its effectiveness is limited. Also, Ae. aegypti, for example, prefers to rest inside houses, so truck or aerial insecticide spraying simply does not reach mosquitoes resting in hidden places such as cupboards~\cite{WHO2004}. 
The different types of control mechanisms put in place to reduce the proliferation of vectors responsible for the transmission of pathogens such as arboviruses are listed below..

\begin{itemize}
	\item[(i)] Biological control or "biocontrol" is the use of natural enemies to manage vector populations: introduction of parasites, pathogens and predators to target vectors. for example, effective biocontrol agents include predatory fish that feed on mosquito larvae such as mosquitofish (Gambusia affinis) and some cyprinids (carps and minnows) and killifish. Tilapia also consume mosquito larvae~\cite{Alcaraz2007}. As biological control does not cause chemical pollution, it is considered as a better method for mosquito control by many people. However, there are limitations on employing biological agents for mosquito control. The agent introduced usually has to be substantial in number for giving desirable effect.
	\item[(ii)] Mechanical control consist at the environmental sanitation measures to reduce mosquito breeding sites, such as the physical management of water containers (e.g. mosquito-proof covers for water storage containers, polystyrene beads in water tanks), better designed and reliable water supplies, and recycling of solid waste such as discarded tyres, bottles, and cans~\cite{WHO2004,duch}.
	\item[(iii)] Chemical methods~\cite{WHO2004,duch}: 
	\begin{itemize}
		\item chemical methods against the mosquito's aquatic stages for use in water containers (larviciding --killing of larvae),
		\item chemical methods directed against adult mosquitoes, such as insecticide space sprays or residual applications (adulticiding --killing of adult mosquitoes),.
	\end{itemize}
	\item[(iv)] Personal protection consist at the use of repellents, vaporizers, mosquito coils, and insecticide treated screens, curtains, and bednets (for daytime use against Aedes)~\cite{WHO2004}.
\end{itemize}

The main problem encountered in the implementation of some of these control mechanisms is the preservation of the ecological systems. For example, in the "biocontrol" mechanism, direct introduction of tilapia and mosquitofish into ecosystems around the world have had disastrous consequences~\cite{Alcaraz2007}. Also, the chemical methods can not be applied in continuous times. Some chemical product like {\it Deltamethrin} seems to be effective only during a couple of hours~\cite{duch,Bosc,HelenaSofia}. So its use over a long period and continuously, leads to strong resistance of the wild populations of {\it Aedes aegypti}~\cite{IRD}, for example.

%Given the increasing number of outbreaks caused by arboviruses in the world, it is clear that the 	
%implementation of the various control mechanisms mentioned above are of limited effectiveness. 
%over time.

For all the diseases mentioned above, only yellow fever has a licensed vaccine. Nevertheless, considerable efforts are made to obtain vaccines for other diseases. In the case of dengue, for example, tests carried out in Asia and Latin America, have shown that the future dengue vaccine will have a efficacy between 30.2\%  and 77.7\%, and this, depending on the serotype~\cite{Arunee2012,Villar2015}. Also, the future dengue vaccine will have an overall efficacy of 60.8\% against all forms of the disease in children and adolescents aged 9-16 years who received three doses of the vaccine\cite{Sanofi2014}.
 
As the future vaccines (e.g., dengue vaccine) will be imperfect, it is therefore necessary to combine such vaccines with some control mechanisms cited above, to find the best sufficient combination (in terms of efficacy and costs), which permit to decrease the expansion of these kind of diseases in human communities.

A number of studies have been conducted to study host-vector models for arboviral diseases transmission (see~
\cite{duch,AbbouEtAl2015,AbbouEtAl2016,Aldila2013,Antonio2001,BlaynehaetAl,caga,coutinho2006,cretal,Derouich2006,esva98,esva99,fevh,gaguab,maya,moaaca,moaaHee2012,poetal,HelenaSofiaRodrigues2014}). 
Some of these works have been conducted to explore optimal control theory for arboviral disease models~(see~\cite{Aldila2013,BlaynehaetAl,moaaHee2012,HelenaSofiaRodrigues2014,Dias2015}). 

In~\cite{Aldila2013}, Dipo Aldila and co-workers derive a optimal control problem for a host-vector Dengue transmission model, in which treatments with mosquito repellent are given to adults and children and those who undergo treatment are classified in treated compartments. The only control considered by the authors is the treatment of people with clinical signs of the disease. 
Blayneh et {\it al.} in~\cite{BlaynehaetAl} consider a deterministic model for the transmission dynamics of West Nile virus (WNV) in the mosquito-bird-human zoonotic cycle. They use two control functions, one for mosquito-reduction strategies and the other for personal (human) protection, and redefining the demographic parameters as density-dependent rates.
In~\cite{moaaHee2012}, Moulay et {\it al.} derive optimal prevention (individual protection), vector control (Larvae reduction) and treatment strategies used during the Chikungunya R\'eunion Island epidemic in 2006.
Authors in~\cite{HelenaSofiaRodrigues2014} derive the optimal control efforts for vaccination in order to prevent the spread of a Dengue disease using a system of ordinary differential equations (ODEs) for the host and vector populations.
Recently, Dias et {\it al.} in~\cite{Dias2015} analyse the Dengue vector control problem in a multiobjective optimization approach, in which the intention is to minimize both social and economic costs, using a dynamic mathematical model representing the mosquitoes' population. This multiobjective optimization approach consists in finding optimal alternated step-size control policies combining chemical (via application of insecticides) and biological control (via insertion of sterile males produced by irradiation).

None of the above mentioned models ~\cite{Aldila2013,BlaynehaetAl,moaaHee2012,HelenaSofiaRodrigues2014,Dias2015} takes into account the combination of optimal control mechanisms such as vaccination, individual protection, treatment and vector control strategies.
In our effort, we investigate such optimal strategies for vaccination combined with individual protection, treatment and two vector controls (adulticiding--killing of adult vectors, and larviciding--killing eggs and larvae), using two systems of ODEs which consist of a complete stage structured model Eggs-Larvae-Pupae for the vectors, and a SEI/SEIR type model for the vector/host population. This provides a new different mathematical perspective to the subject. Furthermore, a efficiency analysis and cost effectiveness analysis, are performed here in order to evaluate the control combination that is most effective in the design of optimal strategies.

We start with the formulation of a model without control which is an modified of the previous models developed in \cite{AbbouEtAl2015,AbbouEtAl2016}. 
We compute the net reproductive number $\mathcal{N}$, as well as the basic reproduction number, $\mathcal R_0$, and investigate the existence and stability of equilibria. We prove that the trivial equilibrium is globally asymptotically stable whenever $\mathcal{N}<1$. When $\mathcal{N}>1$ and $\mathcal R_0<1$, we prove that the system exhibit the backward bifurcation phenomenon. The implication of this occurrence is that the classical epidemiological requirement for effective eradication of the disease, $\mathcal R_0<1$, is no longer sufficient, even though necessary. We show the existence of a transcritical bifurcation and a possible saddle-node bifurcation and explicitly derive threshold conditions for both.

Then, we formulate an optimal control model by adding five control functions: three for human  (vaccination, protection against mosquitoes bites and treatment), and two for mosquito-reduction strategies (the use of adulticide to kill adult vectors, and the use of larvicide to increase the mortality rate of eggs and larvae). By Using optimal control theory, we derive the conditions under which it is optimal to eradicate the disease and examine the impact of a possible combination of vaccination, treatment, individual protection and vector control strategies on the disease transmission. The Pontryagin's maximum principle is used to characterize the optimal control. Numerical simulations, efficiency analysis, as well as, the cost effectiveness analysis, are performed to determine the best combination (in terms of efficacy and cost).

The rest of the paper is organized as follows. In section~\ref{sec:basicModelOPC}, we present the basic transmission model and carry out some analysis by determining important thresholds such as the net reproductive number $\mathcal{N}$ and the basic reproduction number $\mathcal R_0$, and different equilibria of the model. We then demonstrate the  stability of equilibria and carry out bifurcation analysis, by deriving the threshold conditions for saddle--node bifurcation. In Section~\ref{sec2ar3opc} we present the optimal control problem and its mathematical analysis. Section \ref{NUMopc} is devoted to numerical simulations, efficiency analysis and cost effectiveness analysis. A conclusion round up the paper.

\section{The basic model and its analysis}
\label{sec:basicModelOPC}
The model we propose here is based on the modelling approach given in \cite{AbbouEtAl2015,AbbouEtAl2016}. For the reader convenience, we briefly recall here
main results which are developed in this work.

It is assumed that the human and vector populations are divided into compartments described by time--dependent state variables. The compartments in which the populations are divided are the following ones:

\indent(i) For humans, we consider a SEIR model: Susceptible (denoted by $S_h$), exposed ($E_h$), infectious ($I_h$) and resistant or immune ($R_h$) which includes naturally-immune individuals. The recruitment in human population is at the constant rate $\Lambda_h$, and newly recruited individuals enter the susceptible compartment $S_h$. Are concern by recruitment people that are totally naive from the disease. Each human compartment, individual goes out from the dynamics at natural mortality rates $\mu_h$. The human susceptible population is decreased following infection, which can be acquired via effective contact with an exposed or infectious vector at a rate 
\begin{equation}
\label{fihOPC}
\lambda_h=\dfrac{a\beta_{hv}(\eta_vE_v+I_v)}{N_h},
\end{equation}
where $a$ is the biting rate per susceptible vector, $\beta_{hv}$ is the transmission probability from an exposed/infectious vector ($E_v$ or $I_v$) to a susceptible human ($S_h$). The expression of $\lambda_h$ is obtained as follows. The probability that a vector chooses a particular human or other source of blood to bite can be assumed as $\dfrac{1}{N_h}$. Thus, a human receives in average $a\dfrac{N_v}{N_h}$ bites per unit of times. Then, the infection rate per susceptible human is given $a\beta_{hv}\dfrac{N_v}{N_h}\dfrac{(\eta_vE_v+I_v)}{N_v}$. In expression of $\lambda_h$, the modification parameter $0 <\eta_v< 1$ accounts for the assumed reduction in transmissibility of exposed mosquitoes relative to infectious mosquitoes~\cite{AbbouEtAl2015,AbbouEtAl2016,gaguab} (see the references therein for the specific sources). Latent humans ($E_h$) become infectious ($I_h$) at rate $\gamma_h$. Infectious humans recover at a constant rate, $\sigma$ or dies  as consequence of infection, at a disease-induced death rate $\delta$. Immune humans retain their immunity for life. 
We denote the total human population by $N_h$,
	\begin{equation}
		\label{NhBasic}
	N_h = S_h + E_h + I_h + R_h.
	\end{equation}

\indent(ii) Following \cite{moaaca}, the stage structured model is used to describe the vector population dynamics, which consists of three main stages: embryonic (E), larvae (L) and pupae (P). Even if eggs (E) and immature stages (L and P) are both aquatic, it is important to dissociate them because, for optimal control point of view, drying the breeding sites does not kill eggs, but only larvae and pupae. Moreover, chemical interventions on the breeding sites have a more great impact on the larval population, but not on the eggs~\cite{moaaca}. The number of laid eggs is assumed proportional to the 
number of females. The system of stage structured model of aquatic phase development of vector is given by (see \cite{moaaca} for details) 
\begin{subequations}
\label{AR3}
\begin{align}
%\left\lbrace  \begin{array}{ll}
\dot{E}&=\mu_b\left(1-\dfrac{E}{\Gamma_{E}} \right)(S_v+E_v+I_v)-(s+\mu_E)E\\
\dot{L}&=sE\left(1-\dfrac{L}{\Gamma_{L}} \right)-(l+\mu_L)L\\
\dot{P}&=lL-(\theta+\mu_P)P.
%\end{array}\right.
\end{align}
\end{subequations}
Unlike the authors of \cite{moaaca}, we take into account the pupal stage in the development of the vector. This is justified by the fact that they do not feed during this transitional stage of development, as they transform from larvae to adults. So, the control mechanisms can not be applied to them. 

With a rate $\theta$, pupae become female adults. Each vector compartment, individuals goes out from the dynamics at natural mortality rates $\mu_v$. The vector susceptible population is decreased following infection, which can be acquired via effective contact with an exposed or infectious human at a rate \begin{equation}
\label{fivOPC}
\lambda_v=\dfrac{a\beta_{vh}(\eta_hE_h+I_h)}{N_h},
\end{equation}
 where $\beta_{vh}$ is the transmission probability from an exposed/infectious human ($E_h$ or $I_h$) to a susceptible vector ($S_v$). As well as in the expression of $\lambda_h$, the modification parameter $0 <\eta_h< 1$ in the expression of $\lambda_v$ accounts for the assumed reduction in transmissibility of exposed humans relative to infectious humans~\cite{AbbouEtAl2015,AbbouEtAl2016,gaguab}. Latent vectors ($E_v$) become infectious ($I_v$) at rate $\gamma_v$. The vector population does not have an immune class, since it is assumed that their infectious period ends with their death \cite{esva99}. So, we denote the total adult vector population by $N_v$,
	\begin{equation}
	\label{NvBasic}
	N_v = S_v + E_v + I_v.
	\end{equation}

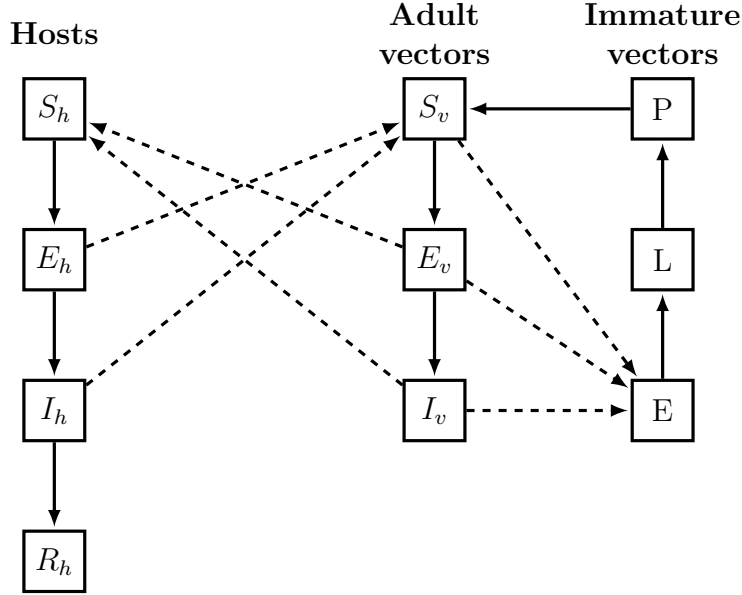
\begin{figure}[h!]
	\begin{center}
		%\hspace*{-35mm}
		\begin{tikzpicture}
		[host/.style={rectangle,draw=black!0,fill=black!0,very thick,
			inner sep=3pt,minimum size=8mm, font=\bf, align=center},
		compartment/.style={rectangle,draw=black!100,fill=black!0,very thick,
			inner sep=3pt,minimum size=8mm},
		intervention/.style={align=center,red},
		bend angle = 15,
		post/.style={->,shorten >=1pt,>=latex, very thick},
		pre/.style={<-,shorten >=1pt,>=latex, very thick},
		both/.style={<->,shorten >=1pt,>=latex, very thick},
		predashed/.style={dashed,<-,shorten >=1pt,>=latex, very thick},
		postdashed/.style={dashed,->,shorten >=1pt,>=latex, very thick},
		]
		
		\node  [host]  (hosts)        at (0,1)        {Hosts};
		\node  [host]  (hosts)        at (5,1)        {Adult\\ vectors};
		\node  [host]  (hosts)        at (8,1)        {Immature\\ vectors};
		%Human
		\node [compartment] (noeud1) at (0,0) {$S_h$};
		%\node [compartment] (noeud2) at (-3,0) {$V_h$};
		\node [compartment] (noeud3) at (0,-2) {$E_h$};
		\node [compartment] (noeud4) at (0,-4) {$I_h$};
		\node [compartment] (noeud5) at (0,-6) {$R_h$};
		
		%Vector
		\node [compartment] (noeud6) at (5,0) {$S_v$};
		\node [compartment] (noeud7) at (5,-2) {$E_v$};
		\node [compartment] (noeud8) at (5,-4) {$I_v$};
		%Aquatic
		
		\node [compartment] (noeud9) at (8,0) {P};
		\node [compartment] (noeud10) at (8,-2) {L};
		\node [compartment] (noeud11) at (8,-4) {E};
		%figure proprement dites
		%\draw[both] (noeud1)-- (noeud2);
		\draw[post] (noeud1)-- (noeud3);
		%\draw[post] (noeud2)-- (noeud3);
		\draw[post] (noeud3)-- (noeud4);
		\draw[post] (noeud4)-- (noeud5);
		\draw[post] (noeud6)-- (noeud7);
		\draw[post] (noeud7)-- (noeud8);
		\draw[postdashed] (noeud3)-- (noeud6);
		\draw[postdashed] (noeud4)-- (noeud6);
		\draw[postdashed] (noeud7)-- (noeud1);
		\draw[postdashed] (noeud8)-- (noeud1);
		%\draw[postdashed] (noeud7)-- (noeud2);
		%\draw[postdashed] (noeud8)-- (noeud2);
		\draw[post] (noeud11)-- (noeud10);
		\draw[post] (noeud10)-- (noeud9);
		\draw[post] (noeud9)-- (noeud6);
		\draw[postdashed] (noeud6)-- (noeud11);
		\draw[postdashed] (noeud7)-- (noeud11);
		\draw[postdashed] (noeud8)-- (noeud11);
		%\draw[curve] (noeud5)-- (noeud1);
		
		\end{tikzpicture}
	\end{center}
	\caption{Schematic of the vector-borne epidemic model with development stage of vectors.} \label{shematicBasicModel}
\end{figure}

Therefore, our basic arboviral disease model reads as
\begin{subequations}
	\label{BasicModel}
	\begin{align}
	%\left\lbrace \begin{array}{ll}
	\dot{S}_h&=\Lambda_h-(\lambda_h+\mu_h) S_h \\
	\dot{E}_h&=\lambda_hS_h-(\mu_h+\gamma_h)E_h\\
	\dot{I}_h&=\gamma_hE_h-\left[\mu_h+\delta+\sigma\right] I_h\\
	\dot{R}_h&=\sigma I_h-\mu_hR_h\\
	\dot{S}_v&=\theta P-\lambda_vS_v-\mu_vS_v\\
	\dot{E}_v&=\lambda_vS_v-(\mu_v+\gamma_v)E_v\\
	\dot{I}_v&=\gamma_vE_v-(\mu_v)I_v\\
	\dot{E}&=\mu_b\left(1-\dfrac{E}{\Gamma_E} \right)N_v-(s+\mu_E)E\\
	\dot{L}&=sE\left(1-\dfrac{L}{\Gamma_L} \right)-(l+\mu_L)L\\
	\dot{P}&=lL-(\theta+\mu_P)P
	%\end{array}\right.
	\end{align}
\end{subequations}
where the upper dot denotes the time derivative and $\lambda_h$ and $\lambda_v$ are given by \eqref{fihOPC} and \eqref{fivOPC}, respectively. A schematic of the model is shown in Figure \ref{shematicBasicModel}. The states and parameters are all strictly positive constants and are described in Table \ref{octab1} and \ref{octab2}, respectively.

\begin{table}[h!]
	\begin{center}
		\caption{The state variables of model (\ref{BasicModel}).}
		\label{octab1}
		\begin{tabular}{rlrlrlrl}
			\hline
			& Humans  &  &Aquatic Vectors&& Adult Vectors \\
			\hline
			$S_{h}$:& Susceptible & $E$:& Eggs&$S_{v}$:& Susceptible\\
			$E_{h}$:& Infected in latent stage  &$L$:& Larvae&$E_v$& Infected in latent stage\\
			$I_{h}$:& Infectious   &$P$:& Pupae&$I_v$&Infectious\\
			$R_{h}$:& Resistant (immune)  & & & &\\
			\hline
		\end{tabular}
	\end{center}
\end{table}
\begin{table}[h!]
	\begin{center}
		\caption{Description and baseline values/range of parameters of model (\ref{BasicModel}). The baseline values refer to dengue fever transmission. \label{octab2}} 
		\begin{tabular}{llll}
			\hline\noalign{\smallskip}
			Parameter& Description&Baseline&Sources\\&& value/range &\\
			\noalign{\smallskip}\hline\noalign{\smallskip}
			$\Lambda_h$ & Recruitment rate of humans &  2.5 $day^{-1}$&\cite{gaguab}  \\
			$\mu_h$ & Natural mortality rate in humans & $\frac{1}{(67\times 365)}~day^{-1}$&\cite{gaguab}\\
			$a$ & Average number of bites& 1~$day^{-1}$&\cite{Aldila2013,gaguab}\\
			$\beta_{hv}$ & Probability of transmission of &0.1, 0.75 $day^{-1}$&\cite{Aldila2013,gaguab}\\
			&infection from an infected vector&&\\
			&to a susceptible human&&\\
			$\gamma_h$ & Progression rate from $E_h$ to $I_h$ &$\left[\frac{1}{15},\frac{1}{3}\right]~day^{-1}$ &\cite{duch,Scott2010}\\
			$\delta$ & Disease--induced death rate& 10$^{-3}~day^{-1}$ &\cite{gaguab}\\
			$\sigma$ & Recovery rate for humans & 0.1428~$day^{-1}$&\cite{Aldila2013,gaguab}\\
			$\eta_h$,$\eta_v$& Modifications parameter&$\left[0,1\right)$&\cite{gaguab}\\
			$\mu_v$ & Natural mortality rate of vectors &$\left[\frac{1}{30},\frac{1}{14}\right]~ day^{-1}$&\cite{Aldila2013,gaguab}\\
			$\gamma_v$ & Progression rate from $E_v$ to $I_v$ &$\left[\frac{1}{21},\frac{1}{2}\right]day^{-1}$&\cite{duch,Scott2010} \\
			$\beta_{vh}$ &Probability of transmission of &0.1, 0.75~$day^{-1}$&\cite{Aldila2013,gaguab}\\
			&infection from an infected human&&\\
			&to a susceptible vector&&\\
			$\theta$& Maturation rate from pupae to adult & 0.08~$day^{-1}$&\cite{duch,moaaca,moaaHee2012}\\
			$\mu_b$&Number of eggs at each deposit&6~$day^{-1}$&\cite{duch,moaaca,moaaHee2012}\\
			$\Gamma_{E}$&Carrying capacity for eggs&$10^3,10^6$&\cite{Aldila2013,moaaca}\\
			$\Gamma_{L}$&Carrying capacity for larvae&$5\times 10^2,5\times10^5$&\cite{Aldila2013,moaaca}\\
			$\mu_{E}$&Eggs death rate  &0.2 or 0.4 &\cite{moaaHee2012}\\
			$\mu_{L}$&Larvae death rate&0.2 or 0.4&\cite{moaaHee2012}\\
			$\mu_{P}$&Pupae death rate&$0.4$&Assumed\\
			$s$&Transfer rate from eggs to larvae&0.7~$day^{-1}$&\cite{moaaHee2012}\\
			$l$&Transfer rate from larvae to pupae&0.5~$day^{-1}$&\cite{moaaca}\\
			%$\eta_{1},\eta_{2}$&Eggs and larvae mortality rates&0.001,0.3&\cite{moaaHee2012}\\
			\noalign{\smallskip}\hline
		\end{tabular}
	\end{center}
\end{table}
\begin{remark}
	\indent(i) It is important to note that, in	the case of other arboviral diseases (e.g, Chikungunya), the exposed humans and vectors do not play any role in the infectious process, in this case $\eta_h=\eta_v=0$.	\\
	\indent(ii) The model \eqref{BasicModel} is the same that we studied in a previous work (see model 18 in~\cite{AbbouEtAl2016}) to show that the backward bifurcation is caused by the disease--induced death in human. In this previous work, we have just showed that the occurrence of the backward bifurcation is possible in the model without vaccination. In the present work, we give a sufficient and necessary condition %(See subsection \ref{sec:BackwardBasicModel})
	, as well as the explicit expressions of the thresholds which governing this phenomenon. %(See subsection \ref{sec:snBasic}).	
\end{remark}

\subsection{Basic properties and equilibria}
The rates of change of the total populations of humans~\eqref{NhBasic} and adult vectors \eqref{NvBasic} for the basic arboviral model \eqref{BasicModel} are,
\begin{align*}
\dot N_h &= \Lambda_h - \mu_h N_h - \delta I_h, \\
\dot N_v &=\theta P-\mu_v N_v.
\end{align*}
Therefore, by standard arguments (see~\cite{AbbouEtAl2015,AbbouEtAl2016,moaaca}) it follows that the feasible region for model \eqref{BasicModel} is
{\footnotesize
\begin{equation*}
\begin{split}
\mathcal{D}&=\left\lbrace
(S_h,E_h,I_h,R_h,S_v,E_v,I_v,E,L,P)\in\mathbb R^{10}_{+}:
N_h\leq\Lambda_h/\mu_h; E\leq K_E; L\leq K_L;P\leq\dfrac{lK_L}{k_7};N_v\leq \dfrac{\theta lK_L}{k_7k_8}\right\rbrace,\\
\end{split}
\end{equation*}
}
where $\mathbb{R}^{10}_+$ represents the non-negative orthant of $\mathbb{R}^{10}$.

The model is epidemiologically (the state variables have a valid physical interpretation) and mathematically (the system of equations has a unique solution that is bounded and exists for all time) well-posed in the region $\mathcal{D}$.

For easier readability, we introduce the following quantities,
\begin{equation}
\label{intervaria}
\begin{array}{l}
%%b_1=(1-\alpha_1)\beta_{hv};\,\,\,b_2=(1-\alpha_1)\beta_{vh};\\
k_1:=\mu_h; k_3:=\mu_h+\gamma_h;\,\,\,
k_4:=\mu_h+\delta+\sigma,\,\, 
k_5:=s+\mu_E;\\
k_6:=l+\mu_L;\,\,\,k_7:=\theta+\mu_P;\,\,\,k_8:=\mu_v;\,\,
k_9:=\mu_v+\gamma_v; k_{10}=\eta_hk_4+\gamma_h;\,\,k_{11}=\eta_vk_8+\gamma_v,
\end{array}
\end{equation}
and (the positive quantity), $k_2=k_3k_4-\delta\gamma_h =\mu_hk_4+\gamma_h(\mu_h+\sigma)$.
%\begin{align*}
%	k_ 2 &= k_3k_4-\delta\gamma_h \\
%	&= \mu_hk_4+\gamma_h(\mu_h+\sigma)
%\end{align*}
%\subsection{The disease--free equilibria}

Without disease in the both populations (i.e $\lambda_h=\lambda_v=0$ or $E_h=I_h=E_v=I_v=0$), the basic arboviral model~\eqref{BasicModel} have two \emph{disease--free equilibria } given by
$\mathcal{E}_{0}=\left(N^{0}_h,0,0,0,0,0,0,0,0,0\right)$ which correspond to the trivial equilibrium, and
$\mathcal{E}_{1}=\left(N^{0}_h,0,0,0,N^{0}_v,0,0,E,L,P\right)$ which correspond to the biological disease--free equilibrium, where
\begin{equation}
\label{TEandDFE}
\begin{array}{l}
N^{0}_h=\dfrac{\Lambda_h}{\mu_h},\,\,\,
N^{0}_v=\dfrac{\Gamma_{E}\Gamma_{L}k_{5}k_{6}\left(\mathcal{N}-1\right)} {\mu_{b}\left(\Gamma_{E}s+k_{6}\Gamma_{L}\right)},
P=\dfrac{\Gamma_{E}\Gamma_{L}k_{5}k_{6}k_{8}\left(\mathcal{N}-1\right)}
{\mu_{b}\theta\left(\Gamma_{E}s+k_{6}\Gamma_{L}\right)}\\
L=\dfrac{\Gamma_{E}\Gamma_{L}k_{5}k_{6}k_{7}k_{8}\left(\mathcal{N}-1\right)}{\mu_{b}\theta l\left(\Gamma_{E}s+k_{6}\Gamma_{L}\right)},\,\,
E=\dfrac{\Gamma_{E}\Gamma_{L}k_{5}k_{6}k_{7}k_{8}\left(\mathcal{N}-1\right)}
{s\left(\mu_{b}l\Gamma_{L}\theta+k_{5}k_{7} k_{8}\Gamma_{E}\right)},
\end{array} 
\end{equation}
and $\mathcal{N}$ is the net reproductive number~\cite{AbbouEtAl2016,moaaca} given by
\begin{equation}
\label{Nar3}
\mathcal{N}=\dfrac{\mu_b\theta ls}{k_5k_6k_7k_8}.
\end{equation}

Define the basic reproductive number~\cite{dihe,vawa02}
%{\small
\begin{equation}
\label{R0ar3}
\begin{split}
\mathcal R_{0}=\sqrt{\dfrac{a^2\beta_{hv}\beta_{vh}(\gamma_{h}+k_4\eta_h)(\gamma_v+k_8\eta_v)N^{0}_{v}}{k_{3}k_{4}k_{8}k_{9}N^{0}_{h}}}.
\end{split}
\end{equation}

We note that,
\begin{equation*}
\mathcal	R_0 = \sqrt{K_{vh} K_{hv}},
\end{equation*}
where,
\begin{equation*}
\begin{split}
	K_{vh} &=(K^{E_h}_{vh}+K^{I_h}_{vh})\\
	%&=\left[ \left(a\right)	\left(\beta_{vh}\right)\left(\eta_{h}\right)	\left(\frac{N^{0}_v}{N^{0}_h}\right)
	%\left(\frac{1}{\mu_h + \gamma_h}\right)\right]+\left[\left(a\right)\left(\beta_{vh}\right)
	%\left(\frac{N^{0}_v}{N^{0}_h}\right)
	%\left(\frac{\gamma_h}{\mu_h + \gamma_h}\right)
	%\left(\frac{1}{\mu_h + \delta_1 + \sigma_h}\right)\right]\\
	&= \left(a\right)\left(\beta_{vh}\right)\left(\frac{N^{0}_v}{N^{0}_h}\right)
	\left(\frac{1}{k_3}\right)
	\left[\left(\eta_{h}\right)
	+\left(\frac{\gamma_h}{k_4}\right)\right]\\
	&=\dfrac{a\beta_{vh}(\gamma_{h}+k_4\eta_h)N^{0}_{v}}{k_{3}k_{4}N^{0}_{h}},
	\end{split}
\end{equation*}
is the number of vector that one human infects through his/her latent/infectious life time. It is equal to the sum of the number of vector infections generated by an exposed human (near the DFE, $\mathcal E_1$) $K^{E_h}_{vh}$, and the number of vector infections generated by an infectious human (near the DFE) $K^{I_h}_{vh}$.
$K^{E_h}_{vh}$ is given by the product of the infection rate of exposed humans ($a\beta_{vh}\eta_hN^{0}_v/N^{0}_h$) and the average duration in the exposed ($E_h$ ) class ($1/k_3$). $K^{I_h}_{vh}$ is given by the product of the infection rate of infectious humans ($a\beta_{vh}N^{0}_v/N^{0}_h$), the probability that an exposed human survives the exposed stage and move to the infectious stage ($\gamma_h/(\mu_h+\gamma_h)$) and the average duration in the infectious stage ($1/(\mu_h+\delta+\sigma)$).

Analogously, we have
\begin{equation*}
\begin{split}
K_{hv}&=K^{E_v}_{hv}+K^{I_v}_{hv}\\
%&=\left[ \left(a\right)\left(\beta_{hv}\right)\left(\eta_{v}\right)\left(\frac{1}{\mu_v + \gamma_v}\right)\right]+\left[\left(a\right)\left(\beta_{hv}\right)\left(\frac{\gamma_v}{\mu_v + \gamma_v}\right)\left(\frac{1}{\mu_v}\right)\right]\\
&=\left(a\right)\left(\beta_{hv}\right)\left(\frac{1}{k_9}\right)
\left(\eta_{v}+\frac{\gamma_v}{k_8}\right)\\
&=\dfrac{a\beta_{hv}(\gamma_{v}+k_8\eta_h)}{k_{8}k_{9}},
\end{split}
\end{equation*}
which is the number of humans that one vector infects through its infectious life time.
It is equal to the sum of the number of human infections generated by an exposed vector (near the DFE, $\mathcal E_1$),$K^{E_v}_{hv}$, and the number of human infections generated by an infectious vector (near the DFE), $K^{I_v}_{hv}$. $K^{E_v}_{hv}$ is given by the product of the infection rate of exposed vectors ($a\beta_{vh}\eta_h$) and the average duration in the exposed ($E_v$ ) class ($1/(\mu_v+\gamma_v)$). $K^{I_v}_{vh}$ is given by the product of the infection rate of infectious humans ($a\beta_{vh}$), the probability that an exposed human survives the exposed stage and move to the infectious stage ($\gamma_h/k_9$) and the average duration in the infectious stage ($1/k_8$). The basic reproduction number is equal to the geometric mean of $K_{vh}$ and $K_{hv}$ because infection from human to human goes through one generation of vectors.

The local asymptotic stability result of equilibria $\mathcal{E}_{0}$ and $\mathcal{E}_{1}$  is given in the following.
\begin{theorem}
	\label{th3Basic}$\;$
	
	\begin{itemize}
		\item[(i)] if $\mathcal{N}\leq 1$, the trivial equilibrium $\mathcal{E}_{0}$ is locally asymptotically stable in $\mathcal{D}$; 
		\item[(ii)] if $\mathcal{N}>1$, the trivial equilibrium is unstable and the disease--free equilibrium $\mathcal{E}_{1}$ is locally asymptotically stable in $\mathcal{D}$ whenever $\mathcal R_0< 1$.
	\end{itemize}
\end{theorem}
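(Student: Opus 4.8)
The plan is to linearise the system \eqref{BasicModel} at each of the two disease--free equilibria and to exploit the fact that, at such an equilibrium, the Jacobian is block triangular: the infected compartments collapse into a small ``epidemic'' block, while the vector stages $(S_v,E,L,P)$ form a ``demographic'' block, so the stability question reduces to two quartics together with manifestly stable scalar factors. First I would record the partial derivatives of $\lambda_h,\lambda_v$ and note that at a DFE all terms proportional to $\eta_vE_v+I_v$ or $\eta_hE_h+I_h$ (hence many coupling entries) vanish.

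\textbf{Part (i).} At $\mathcal E_0$ we have $S_v=E_v=I_v=E=L=P=0$ and $S_h=N_h^0$. In a suitable ordering the Jacobian is lower block triangular: $S_h$ and $R_h$ each contribute the eigenvalue $-\mu_h$; $(E_v,I_v)$ decouples and contributes $-k_9,-k_8$; $(E_h,I_h)$ is driven by $(E_v,I_v)$ alone and contributes $-k_3,-k_4$; and $(S_v,E,L,P)$ forms a cyclic block with diagonal $(-k_8,-k_5,-k_6,-k_7)$ and cycle entries $\theta,\mu_b,s,l$, whose characteristic polynomial is
\begin{equation*}
q(x)=(x+k_5)(x+k_6)(x+k_7)(x+k_8)-\mu_b\theta s l=(x+k_5)(x+k_6)(x+k_7)(x+k_8)-\mathcal N\,k_5k_6k_7k_8 .
\end{equation*}
If $\mathcal N<1$ and $\mathrm{Re}(x)\ge 0$, then $|x+k_i|\ge k_i$ for $i=5,6,7,8$, so $|(x+k_5)(x+k_6)(x+k_7)(x+k_8)|\ge k_5k_6k_7k_8>\mathcal N k_5k_6k_7k_8$, making $q(x)=0$ impossible; hence the Jacobian at $\mathcal E_0$ is Hurwitz and $\mathcal E_0$ is locally asymptotically stable. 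If $\mathcal N=1$, the same estimate forces any root in the closed right half plane to be $x=0$, a simple root with all others in the open left half plane, and local asymptotic stability in $\mathcal D$ follows from a center--manifold reduction whose one-dimensional reduced flow is of transcritical type and attracting within $\mathcal D$. If $\mathcal N>1$, then $q(0)=k_5k_6k_7k_8(1-\mathcal N)<0$ while $q(x)\to+\infty$, so $q$ has a positive real root and $\mathcal E_0$ is unstable.

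\textbf{Part (ii).} Assume $\mathcal N>1$, so $\mathcal E_1\in\mathcal D$ with $S_v=N_v^0$ and $E^*,L^*,P^*>0$ from \eqref{TEandDFE}. At $\mathcal E_1$ the infected classes still vanish, so $S_h,R_h$ again give $-\mu_h$ each, and the key observation is that the four infected equations $(E_h,I_h,E_v,I_v)$ close up among themselves; the Jacobian is therefore block lower triangular with an epidemic block $J_1$ and a demographic block $J_2$ on $(S_v,E,L,P)$. For $J_2$ the characteristic polynomial is again of cyclic type,
\begin{equation*}
(x+k_8)\Big(x+\tfrac{\mu_bN_v^0}{\Gamma_E}+k_5\Big)\Big(x+\tfrac{sE^*}{\Gamma_L}+k_6\Big)(x+k_7)=\theta\,l\,\mu_b\Big(1-\tfrac{E^*}{\Gamma_E}\Big)s\Big(1-\tfrac{L^*}{\Gamma_L}\Big),
\end{equation*}
and the equilibrium identities $\mu_b(1-E^*/\Gamma_E)N_v^0=k_5E^*$, $sE^*(1-L^*/\Gamma_L)=k_6L^*$, $lL^*=k_7P^*$ and $\theta P^*=k_8N_v^0$ collapse the right-hand side to $k_5k_6k_7k_8$, which is strictly below the product of the positive diagonal constants on the left; the modulus argument of Part (i) then shows $J_2$ is Hurwitz. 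For $J_1$ I would write $J_1=F-V$, with $F$ the new-infection matrix and $V$ the transfer matrix of the next-generation construction of \cite{vawa02}; $V$ is a nonsingular M--matrix, $F\ge0$, and $\rho(FV^{-1})$ is exactly the $\mathcal R_0$ of \eqref{R0ar3} (the computation displayed before the theorem). By the van den Driessche--Watmough theorem, all eigenvalues of $F-V$ have negative real part if and only if $\rho(FV^{-1})=\mathcal R_0<1$. Combining, the Jacobian at $\mathcal E_1$ is Hurwitz when $\mathcal N>1$ and $\mathcal R_0<1$; instability of $\mathcal E_0$ in this regime is immediate because $q(0)<0$ there.

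\textbf{Main obstacle.} The quartics cost essentially nothing thanks to the estimate $|x+k_i|\ge k_i$ on the closed right half plane, and the epidemic block reduces to the already-computed $\mathcal R_0$, so no Routh--Hurwitz manipulation is needed. The genuinely delicate points are the bookkeeping confirming that the off-diagonal coupling entries of the Jacobian vanish at both DFEs (which is what legitimises the block-triangular reductions), and the borderline case $\mathcal N=1$ for $\mathcal E_0$, where a simple zero eigenvalue appears and local asymptotic stability in $\mathcal D$ must be obtained from the reduced center-manifold dynamics rather than from linearisation.
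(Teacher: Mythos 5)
Your argument is correct and, for part (i), it reaches the same quartic as the paper by the same block reduction but disposes of it in a genuinely different way. The paper expands the characteristic polynomial of the $(S_v,E,L,P)$ block into $\phi_1(\lambda)=\lambda^4+A_1\lambda^3+A_2\lambda^2+A_3\lambda+A_4$ with $A_4=k_5k_6k_7\mu_v(1-\mathcal N)$ and then verifies the Routh--Hurwitz determinants $H_1,\dots,H_4$ by brute-force expansion; you instead keep the factored form $q(x)=\prod_i(x+k_i)-\mathcal N\prod_i k_i$ and use the elementary bound $|x+k_i|\ge k_i$ on the closed right half-plane, which settles the Hurwitz property in two lines and simultaneously gives instability for $\mathcal N>1$ from $q(0)<0$. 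This buys a much shorter and more transparent argument, at the price of having to justify that the cyclic structure of the block really yields that factored characteristic polynomial (which it does; your identification of the diagonal and cycle entries matches the paper's Jacobian \eqref{Jacm1Ar3ap}). For part (ii) the two proofs coincide in substance: both delegate the infected block to the next-generation theorem of van den Driessche--Watmough with the $F$ and $V$ displayed in the appendix; you are in fact slightly more complete, since your explicit check that the demographic block $J_2$ at $\mathcal E_1$ is Hurwitz (via the equilibrium identities collapsing the cycle product to $k_5k_6k_7k_8$) is precisely the hypothesis on the disease-free subsystem that the paper invokes that theorem without verifying.

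One caveat: your treatment of the borderline case $\mathcal N=1$ asserts, but does not carry out, a centre-manifold reduction showing the reduced one-dimensional flow is attracting in $\mathcal D$; as written this is an unproved claim. It is, however, not a worse gap than the paper's own, since the Routh--Hurwitz argument there also requires $A_4>0$, i.e.\ $\mathcal N<1$ strictly, and the conclusion for $\mathcal N=1$ is really only covered by the separate Lyapunov argument of Theorem~\ref{GasTEBasic}. If you want to close this point without the centre manifold, the cleanest route is to cite that global-stability result for $\mathcal N\le 1$.
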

\begin{proof}
	See appendix~\ref{AppBasic1}.\hfill.\hfill
\end{proof}
The epidemiological implication of item (ii) in Theorem \ref{th3Basic} is that, in general, when the basic reproduction number, $\mathcal R_0$ is less than unity, a small influx of infectious vectors into the community would not generate large outbreaks, and the disease dies out in time (since the DFE is locally asymptotically stable)~\cite{AbbouEtAl2016,gaguab,dihe,vawa02,cretal2005}. However, we will show in the subsection~\ref{sec:BackwardBasicModel} that the disease may still persist even when $\mathcal R_0 < 1$.

The global stability of the trivial equilibrium is given by the following result:
\begin{theorem}
	\label{GasTEBasic} 
	If $\mathcal{N}\leq 1$, then $\mathcal{E}_0$ is globally asymptotically stable on $\mathcal{D}$.
\end{theorem}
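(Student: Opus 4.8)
The plan is to exploit the fact that the aquatic stages together with the total adult vector population form a closed subsystem that is decoupled from the infection dynamics, drive that subsystem to extinction by a linear Lyapunov function, and then cascade the conclusion through the remaining equations. First I would note that, since $N_v=S_v+E_v+I_v$ satisfies $\dot N_v=\theta P-k_8N_v$ and the equation for $\dot E$ depends on $(S_v,E_v,I_v)$ only through $N_v$, the quadruple $(E,L,P,N_v)$ obeys a closed cooperative system on the compact forward--invariant set $\mathcal D_v$ induced by $\mathcal D$, on which $0\le E\le\Gamma_E$ and $0\le L\le\Gamma_L$ (so the logistic factors lie in $[0,1]$).

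Second, I would introduce the linear Lyapunov function
\[
V=c_1E+c_2L+c_3P+c_4N_v,\qquad c_4=1,\ c_3=\tfrac{\theta}{k_7},\ c_2=\tfrac{\theta l}{k_6k_7},\ c_1=\tfrac{\theta ls}{k_5k_6k_7},
\]
whose weights are chosen so that $c_2k_6=c_3l$, $c_3k_7=c_4\theta$, $c_1k_5=c_2s$, and $c_1\mu_b=k_8\mathcal N$ with $\mathcal N$ as in \eqref{Nar3}. A short computation collapses the telescoping terms and yields
\[
\dot V=k_8(\mathcal N-1)\,N_v-k_8\mathcal N\,\frac{N_vE}{\Gamma_E}-c_2s\,\frac{EL}{\Gamma_L}\le 0\quad\text{on }\mathcal D_v\text{ whenever }\mathcal N\le 1 .
\]
On $\{\dot V=0\}$ each summand vanishes, which forces $N_v E=EL=0$ (and $N_v=0$ when $\mathcal N<1$); in each case the vector equations force, successively, $N_v=0$, $P=0$, $L=0$, $E=0$, so the largest invariant subset of $\{\dot V=0\}$ is just the vector part of $\mathcal E_0$. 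By LaSalle's invariance principle, $(E,L,P,N_v)(t)\to 0$, and in particular $S_v,E_v,I_v\to 0$ since $0\le S_v,E_v,I_v\le N_v$.

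Third, I would propagate this to the human compartments. From $\dot N_h=\Lambda_h-\mu_hN_h-\delta I_h\ge\Lambda_h-(\mu_h+\delta)N_h$ one gets $\liminf_{t\to\infty}N_h(t)\ge\Lambda_h/(\mu_h+\delta)>0$, so that $\lambda_h=a\beta_{hv}(\eta_vE_v+I_v)/N_h\to 0$. Hence $(E_h,I_h,R_h,N_h)$ solves a triangular linear system perturbed by the vanishing forcing term $\lambda_hS_h$, i.e. an asymptotically autonomous system whose limit system has the globally asymptotically stable equilibrium $(0,0,0,N_h^0)$; since all trajectories remain in the compact set $\mathcal D$, the theory of asymptotically autonomous systems gives $E_h,I_h,R_h\to 0$ and $N_h\to N_h^0$, whence $S_h=N_h-E_h-I_h-R_h\to N_h^0$. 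Thus every trajectory in $\mathcal D$ converges to $\mathcal E_0$, and, combined with the local asymptotic stability of $\mathcal E_0$ established in Theorem~\ref{th3Basic}(i), $\mathcal E_0$ is globally asymptotically stable on $\mathcal D$.

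The delicate point is the threshold case $\mathcal N=1$: there the linearization of the vector subsystem at the origin is only marginally (not exponentially) stable, so a naive comparison with a Metzler matrix fails, and it is precisely the global Lyapunov function $V$ — whose decay reduces to the quadratic interaction terms exactly when $\mathcal N=1$ — that closes the argument; everything else is the bookkeeping of the cascade from the vector subsystem to $E_v,I_v$, then to $\lambda_h$, and finally to the human classes.
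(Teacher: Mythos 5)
Your proof is correct, and its engine --- a linear combination of the aquatic and adult--vector compartments whose coefficients make the transfer terms telescope, followed by LaSalle's invariance principle --- is the same one the paper uses in Appendix~\ref{AppBasic2}. The difference is in the decomposition, and it is a genuine improvement. The paper builds a single function $\mathcal L(Y)=\langle g,Y\rangle$ on all ten shifted coordinates $Y=X-\mathcal E_0$, with weight $1$ on every human and adult--vector class and weights on $E,L,P$ proportional to yours up to the factor $1/\mathcal N$ (which is why its leftover term is $\theta\left(1-1/\mathcal N\right)P$ where yours is $k_8(\mathcal N-1)N_v$). Because $Y_1=S_h-S^{0}_h$ may be negative, the paper's $\mathcal L$ is not positive definite on $\mathcal D$ and the term $-\mu_h Y_1$ appearing in $\dot{\mathcal L}$ is not sign--definite, so the asserted inequality $\dot{\mathcal L}<0$ requires more care than the appendix provides. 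You sidestep this entirely by restricting the Lyapunov function to the closed subsystem $(E,L,P,N_v)$, where $V$ is genuinely positive definite and your identity for $\dot V$ checks out term by term, and by then propagating extinction to $S_v,E_v,I_v$ via $0\le S_v,E_v,I_v\le N_v$ and to the human block via the vanishing forcing $\lambda_h S_h$ (using $\liminf_{t\to\infty}N_h\ge \Lambda_h/(\mu_h+\delta)>0$) together with the theory of asymptotically autonomous systems. What your route buys is a clean treatment of the human compartments and an honest handling of the threshold case $\mathcal N=1$, where the decay of $V$ degenerates to the quadratic interaction terms and only the invariance argument closes the loop; what the paper's route buys is brevity. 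One small caveat: the local asymptotic stability you import from Theorem~\ref{th3Basic}(i) is, as actually proved in Appendix~\ref{AppBasic1}, established only for $\mathcal N<1$ (at $\mathcal N=1$ the linearisation has a zero eigenvalue), but your $V$ itself furnishes Lyapunov stability of the vector block at the threshold, so your conclusion stands.
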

\begin{proof}
	See appendix~\ref{AppBasic2}.\hfill
\end{proof}

%\subsection{The endemic equilibria}
We now show the existence of endemic equilibria, that is, steady states of model \eqref{BasicModel} where all state variables are positive.  First we introduce:
\begin{align}
	\label{psiar3opc}
	\psi &=k_{10}a\mu_{h}\beta_{vh}-\delta\gamma_hk_{8}, \\
	\label{R_car3opc}
	\mathcal R_{c} &=\sqrt{\dfrac{2k_{8}k_{2} +k_{10}a\mu_{h}\beta_{vh}}{k_{3}k_{4}k_{8}}} , \\
	\label{R_1bar3opc}
	\mathcal R_{1b} &=\dfrac{1}{k_{3}k_{4}}
	\left( \sqrt{\dfrac{1}{k_{8}k_{9}}}\right) 
	\left| \sqrt{\delta\gamma_h(a\mu_{h}\beta_{vh}k_{10}+k_{2}k_{8})}
	-\sqrt{(-k_{2}\psi)}\right| , \\
	\label{R_2bar3opc}
	\mathcal R_{2b} &=\dfrac{1}{k_{3}k_{4}}
	\left( \sqrt{\dfrac{1}{k_{8}k_{9}}}\right) 
	\left( \sqrt{\delta\gamma_h(a\mu_{h}\beta_{vh}k_{10}+k_{2}k_{8})}
	+\sqrt{(-k_{2}\psi)}\right) .
\end{align}

Note that (as shown in the Appendix~\ref{AppBasic3}), when $\mathcal R_c < \mathcal R_0 < 1$, $\psi \leq 0$ and correspondingly, $\mathcal R_{1b}$ and $\mathcal R_{2b}$ are real.

With the inequalities,
\begin{subequations}
\begin{align}
	\label{R0sn2Basic1}
	\mathcal R_{c}<\mathcal R_{0}<\min(1,\mathcal R_{1b}), \\
	\label{R0sn2Basic2}
	\max(\mathcal R_{c},\mathcal R_{2b})<\mathcal R_{0}<1,
\end{align}
\end{subequations}
we claim the following result:
\begin{theorem}
	\label{EEBasic}
The number of endemic equilibrium points of the basic arboviral disease model~\eqref{BasicModel} depends on $\mathcal R_0$ as follows:
	\begin{enumerate}
		\item[(i)] For $\mathcal R_0 > 1$, the system has a unique endemic equilibrium point.
		\item[(ii)] For $\mathcal R_0 = 1$, the system has
		\begin{enumerate}
			\item A unique endemic equilibrium point if $\mathcal R_c < 1$.
			\item No endemic equilibrium points otherwise.    
		\end{enumerate}
 \item[(iii)] For $\mathcal R_0 < 1$, the system has
\begin{enumerate}
	\item Two endemic equilibrium points if either inequality \eqref{R0sn2Basic1} or \eqref{R0sn2Basic2} is satisfied.
	\item A unique endemic equilibrium point if $\mathcal R_c < \mathcal R_0$ and either $\mathcal R_0 = \mathcal R_{1b}$ or $\mathcal R_0 = \mathcal R_{2b}$.
	\item No endemic equilibrium points otherwise.
\end{enumerate}
	\end{enumerate}
\end{theorem}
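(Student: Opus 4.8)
The plan is to reduce the full ten-dimensional steady-state system \eqref{BasicModel} to a single scalar equation in one endemic variable, count its positive roots, and then translate the root-counting conditions into the threshold inequalities involving $\mathcal R_0$, $\mathcal R_c$, $\mathcal R_{1b}$ and $\mathcal R_{2b}$. First I would set all time derivatives to zero. Since we are looking for an endemic equilibrium, $\lambda_h,\lambda_v>0$, so the aquatic equations and the $S_v$-equation force the vector total $N_v=N^0_v$ exactly as at $\mathcal E_1$ (the aquatic block decouples: it has the unique positive solution recorded in \eqref{TEandDFE} whenever $\mathcal N>1$, which is implicit since otherwise $N^0_v\le 0$ and no endemic state exists). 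Then from the human equations I would express $E_h$, $I_h$, $R_h$, $S_h$ in terms of $\lambda_h$ (using $k_3,k_4,k_1$), and from $N_h = S_h+E_h+I_h+R_h$ get $N_h$ as a rational function of $\lambda_h$; similarly express $E_v$, $I_v$, $S_v$ in terms of $\lambda_v$ (using $k_8,k_9$). Substituting these into the two force-of-infection definitions \eqref{fihOPC} and \eqref{fivOPC} yields two equations linking $\lambda_h$ and $\lambda_v$; eliminating $\lambda_v$ produces a single equation in $x:=\lambda_h$.

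The key step is to show this equation, after clearing denominators, is a quadratic in $x$ of the form
\begin{equation}
\label{quadproof}
A\,x^2 + B\,x + C = 0,
\end{equation}
with $A>0$ (it should be a product of positive $k_i$'s and $N^0_v$), $C$ proportional to $1-\mathcal R_0^2$ (so $C$ has the sign of $1-\mathcal R_0$), and $B$ a linear combination of the $k_i$'s and $\delta\gamma_h$ whose sign is governed by $\psi$ and by comparing $\mathcal R_0$ with $\mathcal R_c$. The reason a quadratic (rather than linear) appears is the disease-induced death term $\delta$: it makes $N_h$ genuinely depend on $\lambda_h$, so the map $x\mapsto N_h(x)$ is nonconstant and the composition becomes degree two; indeed setting $\delta=0$ should collapse $A$ to zero, recovering the earlier remark that without disease-induced death there is no backward bifurcation. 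Once \eqref{quadproof} is established, the count follows from elementary algebra: if $C<0$ (i.e. $\mathcal R_0>1$) there is exactly one positive root — case (i); if $C=0$ (i.e. $\mathcal R_0=1$) the roots are $0$ and $-B/A$, so there is one positive endemic root iff $B<0$, and I would identify $B<0$ with $\mathcal R_c<1$ — case (ii); if $C>0$ (i.e. $\mathcal R_0<1$) two positive roots require $B<0$ and discriminant $B^2-4AC>0$, one positive root (a double root) requires $B<0$ and $B^2=4AC$, and otherwise none — case (iii).

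The last piece is to match $B<0$ and the discriminant condition to the stated inequalities. I would compute that $B<0$ is equivalent to $\mathcal R_0>\mathcal R_c$ (using the definition \eqref{R_car3opc} of $\mathcal R_c$, which is exactly the threshold at which $2k_8k_2+k_{10}a\mu_h\beta_{vh}$ balances $k_3k_4k_8\mathcal R_0^2$, up to the sign bookkeeping that also yields $\psi\le0$ in this regime, as noted before the theorem). Then $B^2-4AC>0$ factors, after completing the square, as $(\mathcal R_0^2-\mathcal R_{1b}^2)(\mathcal R_0^2-\mathcal R_{2b}^2)>0$ with $\mathcal R_{1b}\le\mathcal R_{2b}$ given by \eqref{R_1bar3opc}--\eqref{R_2bar3opc} — the two square-root expressions there are precisely $\pm$ of the two roots of the discriminant quadratic in $\mathcal R_0^2$. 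Since in this regime $\mathcal R_0<1$, $\mathcal R_0^2>\mathcal R_{2b}^2$ forces $\mathcal R_{2b}<\mathcal R_0<1$, i.e. \eqref{R0sn2Basic2}, while $\mathcal R_0^2<\mathcal R_{1b}^2$ gives $\mathcal R_c<\mathcal R_0<\min(1,\mathcal R_{1b})$, i.e. \eqref{R0sn2Basic1}; equality of $\mathcal R_0$ with either $\mathcal R_{1b}$ or $\mathcal R_{2b}$ is the double-root case. The main obstacle I anticipate is purely computational: carrying out the elimination of $\lambda_v$ cleanly and verifying that the resulting $A$, $B$, $C$ have exactly the claimed factored forms — in particular getting the discriminant to factor as a product of the two differences $\mathcal R_0^2-\mathcal R_{ib}^2$ and confirming the sign of $B$ really is controlled by $\mathcal R_c$ and $\psi$ — so I would organize that algebra carefully (grouping terms by powers of $\delta\gamma_h$) rather than expanding blindly, and defer the full computation to the appendix.
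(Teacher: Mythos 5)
Your proposal is correct and follows essentially the same route as the paper's Appendix~\ref{AppBasic3}: solve the steady--state equations for all compartments in terms of $\lambda_h^{*}$ (with the aquatic block fixing $N_v=N^{0}_v$ when $\mathcal N>1$), reduce to a quadratic $d_2(\lambda_h^{*})^{2}+d_1\lambda_h^{*}+d_0=0$ whose constant term has the sign of $\mathcal R_0^{2}-1$ and whose middle coefficient has the sign of $\mathcal R_0^{2}-\mathcal R_c^{2}$, count positive roots by Descartes' rule, and factor the discriminant condition as a quadratic in $\mathcal R_0^{2}$ whose roots are $\mathcal R_{1b}^{2}$ and $\mathcal R_{2b}^{2}$. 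One side remark is inaccurate: at $\delta=0$ the leading coefficient (the paper's $d_2=-k_2\left[k_{10}a\mu_h\beta_{vh}+k_2k_8\right]$) does \emph{not} vanish; rather $N_h$ becomes constant at equilibrium, so the quadratic acquires the common factor $(\mu_h+\lambda_h^{*})$ whose cancellation yields the linear equation of Appendix~\ref{appBasic4} --- but this does not affect your argument for the present theorem.
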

\begin{proof}
	See appendix~\ref{AppBasic3}.\hfill
\end{proof}
It is clear that case \emph{(iii)} (item \emph{(a)}) of theorem \ref{EEBasic} indicate the possibility of backward bifurcation (where the locally-asymptotically stable DFE co-exists with a locally asymptotically stable endemic equilibrium when $\mathcal R_{0}<1$) in the model~\eqref{BasicModel}. In a previous work (see model 18 in~\cite{AbbouEtAl2016}), we just showed that the model exhibited the backward bifurcation phenomenon. In the following, we provide not only a sufficient condition, but also the thresholds which governing this phenomenon.

%In the following, we give a sufficient condition for the occurrence of this phenomenon.	

\subsection{Bifurcation analysis}
\label{sec:BackwardBasicModel}
Here, we use the centre manifold theory \cite{guho} to explore the possibility of bifurcation in \eqref{AR3} at criticality (i.e. the existence and stability
of the equilibrium points bifurcating from $\mathcal E_1$ at $\mathcal R_0=1$) by studying the centre manifold near the criticality through the approach developed in \cite{vawa02,duhucc,ccso,Carr}, which is based on general centre manifold theory~\cite{guho}. To do so, a bifurcation parameter $\beta^{*}_{hv}$ is chosen, by solving for $\beta_{hv}$ from $\mathcal R_0=1$, giving
\begin{equation}
\label{bifparam}
\beta^{*}_{hv}=\dfrac{k_{3}k_{4}k_{8}k_{9}N^{0}_{h}}{a^2\beta_{vh}k_{10}k_{11}N^{0}_{v}}.
\end{equation}
Let $J_{\beta^{*}_{hv}}$ denotes the Jacobian of the system \eqref{AR3} evaluated at the DFE ($\mathcal{E}_1$ ) and with $\beta_{hv}=\beta^{*}_{hv}$. Thus,
{\footnotesize
	$
	J=\left( \begin{array}{cccccccccc}
	-k_1&0&0&0&0&-\beta^{*}_{hv}\eta_v&-\beta^{*}_{hv}&0&0&0\\
	0&-k_3&0&0&0&\beta^{*}_{hv}\eta_v&\beta^{*}_{hv}&0&0&0\\
	0&\gamma_h&-k_4&0&0&0&0&0&0&0\\
	0&0&\sigma&-\mu_h&0&0&0&0&0&0\\
	0&-\dfrac{\beta_{vh}\eta_hS^{0}_v}{N^{0}_h}&-\dfrac{\beta_{vh}S^{0}_v}{N^{0}_h}
	&0&-k_8&0&0&0&0&\theta\\
	0&\dfrac{\beta_{vh}\eta_hS^{0}_v}{N^{0}_h}&\dfrac{\beta_{vh}S^{0}_v}{N^{0}_h}&0&0&-k_9&0&0&0&0\\
	0&0&0&0&0&\gamma_v&-k_8&0&0&0\\
	0&0&0&0&	K_1&K_1&K_1&-K_2&0&0\\
	0&0&0&0&0&0&0&K_3&-K_4&0\\
	0&0&0&0&0&0&0&0&l&-k_7
	\end{array} \right),
	$
}

with $K_1=\mu_b\left(1-\dfrac{E^{*}}{K_E}\right)$, $K_2=k_5+\dfrac{\mu_b}{K_E}S^{0}_v$. $K_3=s\left(1-\dfrac{L^{*}}{K_L}\right)$, and 
$K_4=\left(k_6+\dfrac{sE^{*}}{K_L}\right)$.

Note that the system~\eqref{BasicModel}, with $\beta_{hv}=\beta^{*}_{hv}$, has a hyperbolic equilibrium point, i.e., the linearised system \eqref{BasicModel} has a simple eigenvalue with zero real part and all other eigenvalues have negative real part (this follows from the loss of stability of the disease--free equilibrium, $\mathcal E_1$ through the transcritical bifurcation). Hence, the centre manifold theory~\cite{vawa02,duhucc,ccso,guho,Carr} can be used to analyse the dynamics of the model \eqref{AR3} near $\beta_{hv}=\beta^{*}_{hv}$. The technique in Castillo-Chavez and Song (2004)~\cite{ccso} entails finding the left and right eigenvectors of the linearised system above as follows.

The left eigenvector components of $J_{\beta^{*}_{hv}}$, which correspond to the uninfected states are zero (see Lemma 3 in \cite{vawa02}). Thus a non-zero components correspond to the infected states. It follows that the matrix $J_{\beta^{*}_{hv}}$ has a left eigenvector given by ${\bf v} = (v_1,v_2,\hdots,v_{10} )$, where 
\begin{equation}
\label{LeftVectorBasicModel}
\begin{array}{l}
v_1=v_4=v_5=v_8=v_9=v_{10}=0;\,\, v_2=\dfrac{k_8}{\beta^{*}_{hv}}v_7;\,\,
v_3=\dfrac{\beta_{vh}S^{0}_v(\eta_vk_8+\gamma_v)}{k_4k_9N^{0}_h}v_7;\\
v_6=\dfrac{(\eta_vk_8+\gamma_v)}{k_9}v_7,\,\,
v_7>0.
\end{array} 
\end{equation}

Similarly, the component of the right eigenvector ${\bf w}$ % = (w_1 , w_2 ,\hdots, w_{11} )^{T}$ 
are given by
\begin{equation}
\label{RightVectorBasicModel}
\begin{array}{l}
w_{7}>0,\;\; w_{10}>0,\\
w_1=-\dfrac{\beta^{*}_{hv}(\eta_vk_8+\gamma_v)}{\gamma_vk_1}w_7;\,\,
w_4=\dfrac{\beta^{*}_{hv}\gamma_h\sigma(\eta_vk_8+\gamma_v)}{\mu_h\gamma_vk_3k_4}w_7;\,\,
w_2=\dfrac{\mu_hk_4}{\gamma_h\sigma}w_4;\,\,w_3=\dfrac{\mu_h}{\sigma}w_4;\\
w_5=-\dfrac{k_8+\gamma_v}{\gamma_v}w_7+\dfrac{k_7K_2K_4}{lK_1K_3}w_{10};\,\, w_6=\dfrac{k_8}{\gamma_v}w_7;\,\,w_8=\dfrac{k_7K_4}{lK_3}w_{10};\,\,w_9=\dfrac{k_7}{l}w_{10}.
\end{array} 
\end{equation}

Theorem 4.1 in Castillo-Chavez and Song \cite{ccso} is then applied to establish the existence of backward bifurcation in~\eqref{BasicModel}. To apply such a theorem, it is convenient to let $f_k$ represent the right-hand side of the $k^{th}$ equation of the system \eqref{AR3} and let $x_k$ be the state variables whose derivative is given by the $k^{th}$ equation for $k = 1,\hdots,10$. The local bifurcation analysis near the bifurcation point ($\beta_{hv}=\beta^{*}_{hv}$) is then determined by the signs of two associated constants, denoted by $\mathcal{A}_1$ and $\mathcal{A}_2$, defined by
\begin{equation}
\label{ccsBasicModel}
\mathcal{A}_1=\sum\limits_{k,i,j=1}^{10}v_kw_iw_j\dfrac{\partial^{2}f_k(0,0)}{\partial x_i\partial x_j}\qquad and\qquad
\mathcal{A}_2=\sum\limits_{k,i=1}^{10}v_kw_i\dfrac{\partial^{2}f_k(0,0)}{\partial x_i\partial \phi}
\end{equation}
with $\phi=\beta_{hv}-\beta^{*}_{hv}$. It is important to note that in $f_k(0,0)$, the first zero corresponds to the disease--free equilibrium, $\mathcal{E}_{1}$, for the system \eqref{BasicModel}. Since $\beta_{hv}=\beta^{*}_{hv}$ is the bifurcation parameter, it follows from $\phi=\beta_{hv}-\beta^{*}_{hv}$ that $\phi=0$ when $\beta_{hv}=\beta^{*}_{hv}$ which is the second component in $f_k(0,0)$.

Using Eqs.~\eqref{LeftVectorBasicModel} and \eqref{RightVectorBasicModel} in Eq.~\eqref{ccsBasicModel}, we obtain	
{\footnotesize
	\begin{equation}
	\label{ccsBasicModel1}
	%\begin{array}{l}
	\mathcal{A}_1=v_2\sum\limits_{i,j=1}^{10}w_iw_j\dfrac{\partial^{2}f_2(0,0)}{\partial x_i\partial x_j}
	+v_6\sum\limits_{i,j=1}^{10}w_iw_j\dfrac{\partial^{2}f_6(0,0)}{\partial x_i\partial x_j}\quad and \quad
	\mathcal{A}_2=v_2\sum\limits_{i=1}^{10}w_i\dfrac{\partial^{2}f_2(0,0)}{\partial x_i\partial \phi}
	%\end{array}
	\end{equation}
}
It follows then, after some algebraic computations (see the details in appendix~\ref{ccBasicModeldetails}), that	
\begin{equation}
\label{ccsBasicModel2}
\begin{split}
\mathcal{A}_1&=\zeta_1-\zeta_2, \\ 
\end{split}
\end{equation}

where we have set (see the  appendix~\ref{ccBasicModeldetails} for details on derivation of this quantity)
{\footnotesize
	\begin{equation}
	\label{}
	\begin{array}{l}
	\zeta_1=\left\lbrace 2\dfrac{k_7K_2K_4}{lK_1K_3}\dfrac{a\beta_{vh}}{N^{0}_h}\left( \eta_hw_2+w_3\right)w_{10} 
	-2\dfrac{a\beta_{vh}S^{0}_v}{(N^{0}_h)^{2}}\left( \eta_hw_2+w_3\right)w_1\right\rbrace v_6\\
	\begin{split}
	\zeta_2&=2\dfrac{a\beta^{*}_{hv}}{N^{0}_h}\left(\eta_vw_6+w_7\right)(w_2+w_3+w_4) v_2\\ 
&+2\dfrac{a\beta_{vh}}{N^{0}_h}
\left\lbrace \dfrac{S^{0}_v}{N^{0}_h}\left( \eta_hw^{2}_2
	+(\eta_h+1)w_2w_3+\eta_hw_2w_4+\eta_hw^{2}_3+w_3w_4\right) +\dfrac{(k_8+\gamma_v)}{\gamma_v}\left( \eta_hw_2+w_3\right)w_7\right\rbrace v_6\\
	\end{split}
	\end{array}
	\end{equation}
}
According to \eqref{LeftVectorBasicModel} and \eqref{RightVectorBasicModel}, we have $\zeta_1>0$ and $\zeta_2>0$.

We then have
\[
\begin{split}
\mathcal{A}_2&=\dfrac{aS^{0}_v}{N^{0}_h}\left(\eta_hw_6+w_7\right)v_2. 
\end{split}
\]
Note that the coefficient $\mathcal{A}_2$ is always positive. Thus, using Theorem 4.1 in \cite{ccso}, the following result is established.
\begin{theorem}
	\label{thbifARopt}
	The basic model \eqref{BasicModel} exhibits a backward bifurcation at $\mathcal R_0 = 1$ whenever $\mathcal{A}_1>0$ (i.e., $\zeta_1>\zeta_2$). If the reversed inequality holds, then the bifurcation at $\mathcal R_0 = 1$ is
	forward.
\end{theorem}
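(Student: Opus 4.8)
The plan is to invoke Theorem 4.1 of Castillo--Chavez and Song \cite{ccso}, which determines the local direction of the bifurcation at $\mathcal R_0=1$ from the signs of the two constants $\mathcal{A}_1$ and $\mathcal{A}_2$ defined in \eqref{ccsBasicModel}. First I would verify that the hypotheses of that theorem are satisfied. (a) The point $\mathcal E_1$ is an equilibrium of \eqref{BasicModel} for every value of $\beta_{hv}$, since the infected compartments vanish there irrespective of $\beta_{hv}$. (b) At $\beta_{hv}=\beta^{*}_{hv}$ the linearisation $J_{\beta^{*}_{hv}}$ has a simple zero eigenvalue and all other eigenvalues with negative real part; this is exactly the hyperbolicity statement already recorded above, which follows from Theorem \ref{th3Basic}(ii) together with the fact that $\mathcal R_0 = 1$ marks the loss of local stability of $\mathcal E_1$ through a transcritical bifurcation. (c) The right and left eigenvectors $\mathbf w$ and $\mathbf v$ of $J_{\beta^{*}_{hv}}$ for the zero eigenvalue are those computed in \eqref{RightVectorBasicModel} and \eqref{LeftVectorBasicModel}. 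With these in hand the theorem applies directly.

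Next I would carry out the reduction of the double and single sums in \eqref{ccsBasicModel}. Since $v_1=v_4=v_5=v_8=v_9=v_{10}=0$, only the contributions of $f_2$ (the $\dot E_h$ equation) and $f_6$ (the $\dot E_v$ equation) survive, which is precisely what is recorded in \eqref{ccsBasicModel1}. The only nonlinearities in $f_2$ and $f_6$ are the bilinear incidence terms $\lambda_h S_h$ and $\lambda_v S_v$ (the logistic terms of the aquatic equations do not enter $f_2$ or $f_6$), so the surviving mixed second partials $\partial^{2}f_2(0,0)/\partial x_i\partial x_j$ and $\partial^{2}f_6(0,0)/\partial x_i\partial x_j$ are explicit rational expressions in the model parameters evaluated at $\mathcal E_1$. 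Substituting the eigenvector components of \eqref{LeftVectorBasicModel}--\eqref{RightVectorBasicModel} yields $\mathcal{A}_1=\zeta_1-\zeta_2$ with $\zeta_1,\zeta_2>0$ as displayed in \eqref{ccsBasicModel2}, and $\mathcal{A}_2=\dfrac{aS^{0}_v}{N^{0}_h}(\eta_hw_6+w_7)v_2$. Because $w_6,w_7,v_2>0$ by \eqref{RightVectorBasicModel}--\eqref{LeftVectorBasicModel} and $a,\eta_h,S^{0}_v,N^{0}_h>0$, this gives $\mathcal{A}_2>0$ unconditionally. These computations are routine but lengthy, so I would relegate the full derivation of $\zeta_1$, $\zeta_2$ and $\mathcal{A}_2$ to Appendix~\ref{ccBasicModeldetails}.

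Finally I would conclude by quoting the dichotomy in Theorem 4.1 of \cite{ccso}: when $\mathcal{A}_2>0$, the bifurcation at $\mathcal R_0=1$ is backward (subcritical) precisely when $\mathcal{A}_1>0$, i.e. when $\zeta_1>\zeta_2$, and forward (supercritical) when $\mathcal{A}_1<0$, i.e. $\zeta_1<\zeta_2$ --- which is exactly the asserted statement. The main obstacle in the whole argument is purely bookkeeping: correctly enumerating the nonzero mixed second derivatives of $f_2$ and $f_6$ at $\mathcal E_1$, assembling them with the right eigenvector weights to obtain the stated $\zeta_1,\zeta_2$, and in particular confirming the sign claims $\zeta_1>0$, $\zeta_2>0$; everything else is a direct invocation of the centre--manifold reduction and of Theorem \ref{th3Basic}.
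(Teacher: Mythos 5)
Your proposal is correct and follows essentially the same route as the paper: verify the hypotheses of Theorem 4.1 of Castillo--Chavez and Song at $\beta_{hv}=\beta^{*}_{hv}$, reduce the sums in \eqref{ccsBasicModel} to the $f_2$ and $f_6$ contributions (the $I_h$ and $I_v$ equations being linear, their second derivatives vanish even though $v_3,v_7\neq 0$), obtain $\mathcal{A}_1=\zeta_1-\zeta_2$ and $\mathcal{A}_2>0$, and read off the backward/forward dichotomy from the sign of $\mathcal{A}_1$. This matches the paper's argument, including the relegation of the bookkeeping to Appendix~\ref{ccBasicModeldetails}.
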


The direct consequence of Theorem~\ref{thbifARopt} is the following.
\begin{corollary}
	If $\mathcal{A}_1<0$ (i.e., $\zeta_1<\zeta_2$), then the unique endemic equilibrium point
of the basic model~\eqref{BasicModel} is locally asymptotically stable whenever $\mathcal R_0 > 1$.
\end{corollary}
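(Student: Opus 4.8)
The plan is to obtain the Corollary by reading off the stability clause of Theorem~4.1 in Castillo-Chavez and Song~\cite{ccso} (the same result already invoked to prove Theorem~\ref{thbifARopt}) and then extending it from a neighbourhood of criticality to the whole range $\mathcal R_0>1$ with the help of Theorem~\ref{EEBasic}(i). First I would recall the relevant branch of that theorem: if the bifurcation coefficients satisfy $\mathcal A_1<0$ and $\mathcal A_2>0$, then as $\phi=\beta_{hv}-\beta^{*}_{hv}$ increases through $0$ the disease-free equilibrium $\mathcal E_1$ loses local asymptotic stability and a branch of locally asymptotically stable positive (endemic) equilibria appears for $\phi>0$. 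Since the text has already shown $\mathcal A_2>0$ unconditionally, the hypothesis $\mathcal A_1<0$ (equivalently $\zeta_1<\zeta_2$) is exactly what the theorem requires, so the bifurcation is forward and the emerging endemic equilibrium is locally asymptotically stable. Because $\mathcal R_0$ is increasing in $\beta_{hv}$ by~\eqref{R0ar3} and $\beta^{*}_{hv}$ was chosen in~\eqref{bifparam} so that $\mathcal R_0=1$, the condition $\phi>0$ is equivalent to $\mathcal R_0>1$; hence this step already yields the claim for $\mathcal R_0$ in a right-neighbourhood of $1$.

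The second step upgrades ``$\mathcal R_0$ near $1$'' to ``all $\mathcal R_0>1$''. Here I would use Theorem~\ref{EEBasic}(i): for every $\mathcal R_0>1$ the model~\eqref{BasicModel} has exactly one endemic equilibrium. Away from bifurcation points the implicit function theorem (or directly the explicit parametrisation of the endemic equilibria built in Appendix~\ref{AppBasic3}) shows that this equilibrium varies smoothly with $\beta_{hv}$ and, being the unique nontrivial steady state, is precisely the continuation of the branch emanating from $\mathcal E_1$ at $\beta_{hv}=\beta^{*}_{hv}$. Along such a connected branch the number of eigenvalues of the Jacobian with positive real part is locally constant and can only change when an eigenvalue crosses the imaginary axis. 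A crossing at the origin would create or destroy a second equilibrium, contradicting uniqueness; so the only conceivable loss of stability is through a pair of purely imaginary eigenvalues, i.e. a Hopf bifurcation.

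The main obstacle is therefore to exclude Hopf bifurcation on the endemic branch for $\mathcal R_0>1$, and this is where the real work lies. I would reduce the $10\times 10$ Jacobian at the endemic equilibrium by peeling off the triangular sub-blocks that contribute only real negative eigenvalues --- the aquatic stages $(E,L,P)$ feed the adult vectors one way, and the $R_h$ equation decouples, exactly as in the stability analysis of $\mathcal E_1$ in Appendix~\ref{AppBasic1} --- and then verify the Routh--Hurwitz conditions for the remaining low-dimensional characteristic polynomial; its coefficients are positive because every compartmental rate $k_i$ is positive, and the key Hurwitz determinant can be shown positive throughout $\mathcal R_0>1$ by the same sign bookkeeping that made $\zeta_1,\zeta_2>0$. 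If a fully explicit Routh--Hurwitz check turns out to be unwieldy, a defensible fallback is to observe that continuity together with uniqueness already forces local asymptotic stability on the entire branch unless a Hopf point intervenes, and to record the absence of such points as part of the numerical bifurcation study in Section~\ref{NUMopc}; but I would attempt the direct Routh--Hurwitz verification first, since it keeps the argument self-contained.
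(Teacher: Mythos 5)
Your first paragraph is, in substance, the entirety of the paper's own proof: the Corollary is presented there as a ``direct consequence'' of Theorem~\ref{thbifARopt}, i.e.\ of Theorem~4.1 in Castillo-Chavez and Song, whose forward-bifurcation clause ($\mathcal A_1<0$, $\mathcal A_2>0$) delivers a locally asymptotically stable endemic equilibrium for $\phi=\beta_{hv}-\beta^{*}_{hv}>0$, equivalently $\mathcal R_0>1$ near $1$. Up to that point you and the paper coincide exactly, and you are right that this argument is intrinsically local in the bifurcation parameter.

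Where your proposal goes beyond the paper it also stops short of a proof. The extension from ``$\mathcal R_0$ slightly above $1$'' to ``all $\mathcal R_0>1$'' rests entirely on excluding Hopf bifurcations along the endemic branch, and that step is announced but never executed: the Routh--Hurwitz verification at the endemic equilibrium is not carried out, and the suggested reduction of the $10\times 10$ Jacobian by ``peeling off triangular sub-blocks'' does not work as it did for $\mathcal E_1$ in Appendix~\ref{AppBasic1}, because at an endemic equilibrium the forces of infection $\lambda_h^{*},\lambda_v^{*}$ are nonzero, so the infected compartments couple back into $S_h$, $S_v$ and (through $N_v$) the aquatic stages, destroying the block-triangular structure that made the DFE computation tractable. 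Your fallback --- continuity plus uniqueness plus ``record the absence of Hopf points in the numerical study'' --- is an observation, not a proof; numerics for one parameter set cannot certify the absence of purely imaginary eigenvalues for all $\mathcal R_0>1$. (The zero-eigenvalue part of your continuation argument is fine: for $\mathcal R_0>1$ one has $d_0>0$ and $d_2<0$ in \eqref{eqEEsansBasic}, so the positive root of the quadratic is simple and no fold can occur.) In short, your write-up correctly identifies a gap that the paper itself glosses over, but it does not close it; as a proof of the statement as literally phrased (``whenever $\mathcal R_0>1$'') it is incomplete at exactly the point you flag as the main obstacle.
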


The backward bifurcation phenomenon is illustrated by numerical simulation of the model with the following set of parameter values (it should be noted that these parameters are chosen for illustrative purpose only, and may not necessarily be realistic epidemiologically): $\Lambda_h=30$, $\beta_{hv}=0.008$, $\eta_h=0.78$, $\eta_v=0.99$, $\delta=1$, $\sigma=0.01428$, $\beta_{vh}=0.5$, $\gamma_v=1/14$, $\Gamma_E=10^4$, $\Gamma_L=\Gamma_E/2$. All other parameters are as in Table \ref{octab2}.  In this case the conditions required by Theorem~\ref{EEBasic}, case (iii), are satisfied, as well as $\zeta_1=1.0772\times 10^{-7}>\zeta_2=1.0250\times 10^{-9}$ (so $\mathcal A_1=1.0669\times 10^{-7}>0$) in Theorem~\ref{thbifARopt}. Note, in particular, that
with this set of parameters, $\mathcal R_{c}=0.0367<1$, $\mathcal R_{0}=0.4359<1$ (so that $\mathcal R_{c}<\mathcal R_{0}<1$). It follows: $d_2=-5.6537\times 10^{-8}<0$, $d_1=1.1504\times 10^{-10}>0$ and $d_0=-2.4857\times 10^{-14}<0$, so that $d^{2}_1-4d_2d_0=7.6134\times 10^{-21}>0$. The resulting two endemic equilibria 
$\mathcal{E}_{2}=(S^{*}_h,E^{*}_h,I^{*}_h,R^{*}_h,S^{*}_v,E^{*}_v,I^{*}_v,E,L,P)$, are:\\
$\mathcal{E}^{*}_{2}=(16394,16384,29, 10092,6558,406,869,8393,31334,3264)$,\\
which is locally stable and\\
$\mathcal{E}^{**}_{2}=(104660,104660,25, 8850,7530,97,206,8393,31334,3264)$,\\
which is unstable.

The associated bifurcation diagram is depicted in figure~\ref{backwardar3Basic}. This clearly shows the co-existence of two locally-asymptotically stable equilibria when $\mathcal R_{0}<1$, confirming that the model~\eqref{BasicModel} undergoes the phenomenon of backward bifurcation.  
\begin{figure}[h!]
	\begin{center}
		\includegraphics[scale=0.30]{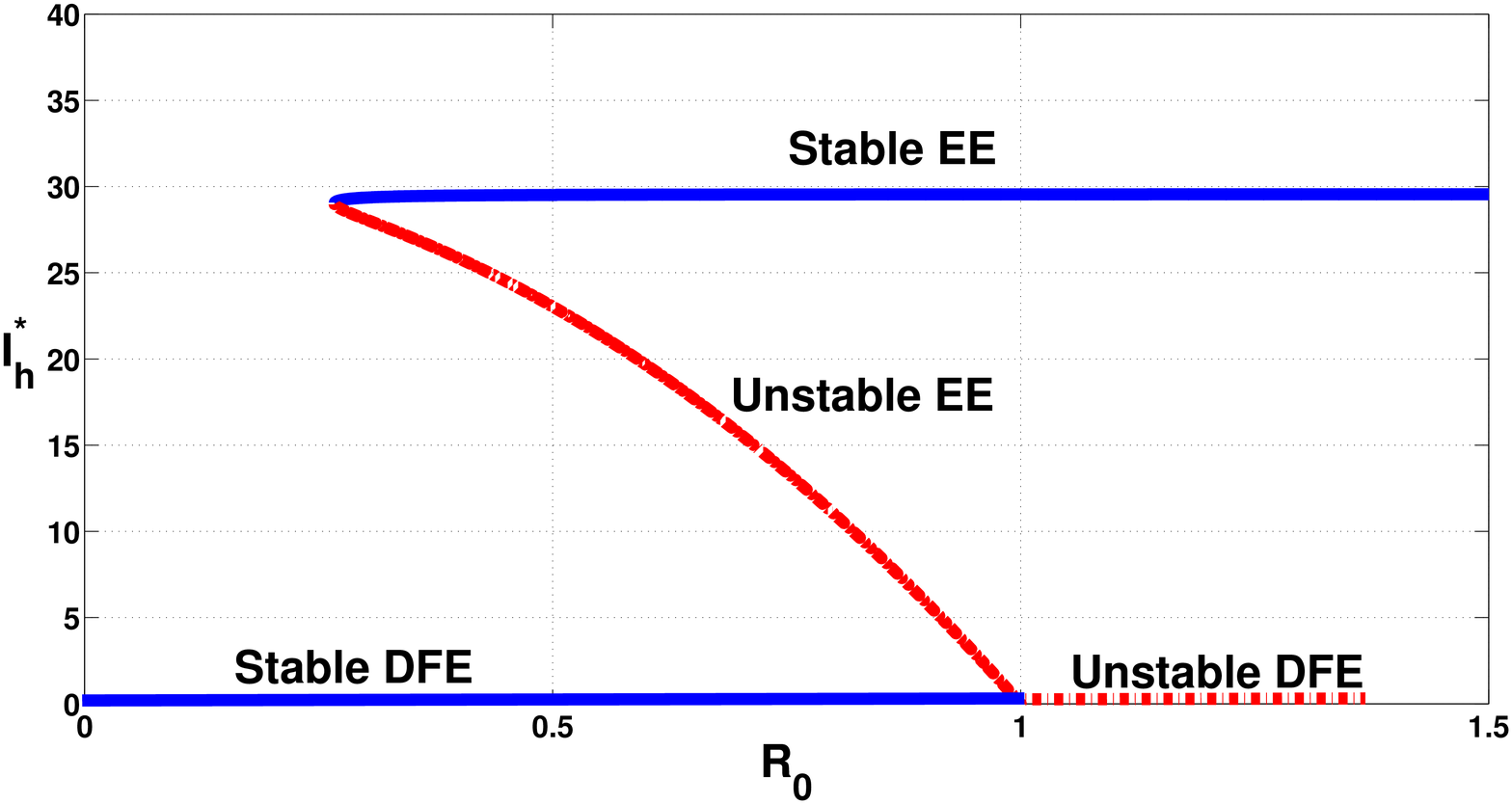}
		\includegraphics[scale=0.30]{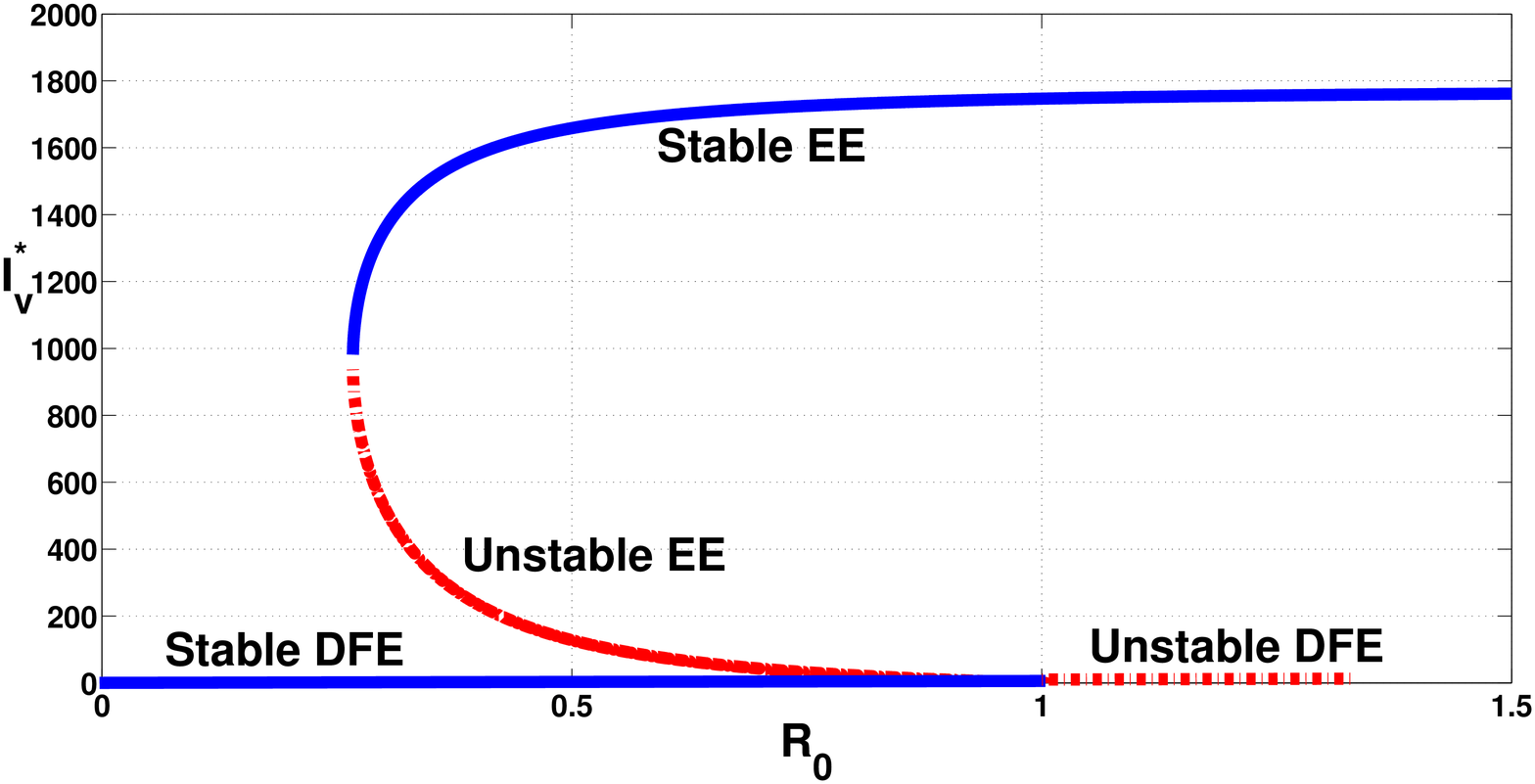}
		\caption{The backward bifurcation curves for model system~\eqref{BasicModel} in the $(\mathcal R_{0}, I^{*}_{h})$ and $(\mathcal R_{0}, I^{*}_{v})$ planes. The parameter $\beta_{hv}$ varied in the range [0, 0.0877] to allow $\mathcal R_{0}$ to vary in the range [0, 1.5]. Two endemic equilibrium points coexist for values of $\mathcal R_{0}$ in the range (0.2671, 1) (corresponding to the range (0.0028, 0.0390) of $\beta_{hv}$). The notation EE and DFE stand for endemic equilibrium and disease--free equilibrium, respectively.  Solid lines represent stable equilibria and dash lines stand for  unstable equilibria. \label{backwardar3Basic}}
	\end{center}
\end{figure}

The occurrence of the backward bifurcation can be also seen in Figure~\ref{IhIvBackwardplot}. Here, $\mathcal R_0$ is less than the transcritical bifurcation threshold $\mathcal R_0=1$ ($\mathcal R_0=0.4359 < 1$), but the solution of the model~\eqref{BasicModel} can approach either the endemic equilibrium point or the disease-free equilibrium point, depending on the initial condition.
\begin{figure}[h!]
	\begin{center}
		\includegraphics[scale=0.30]{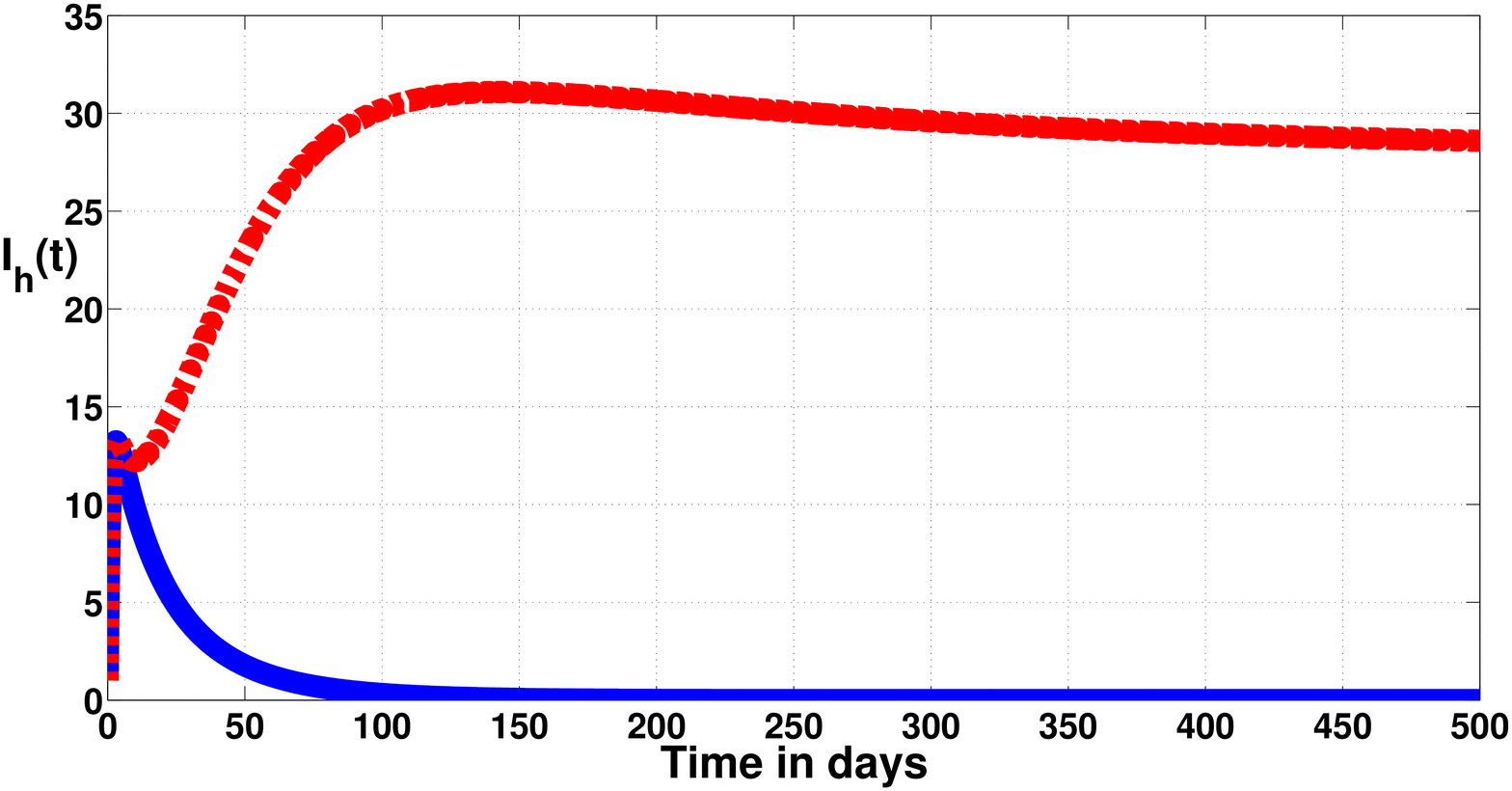}
		\includegraphics[scale=0.30]{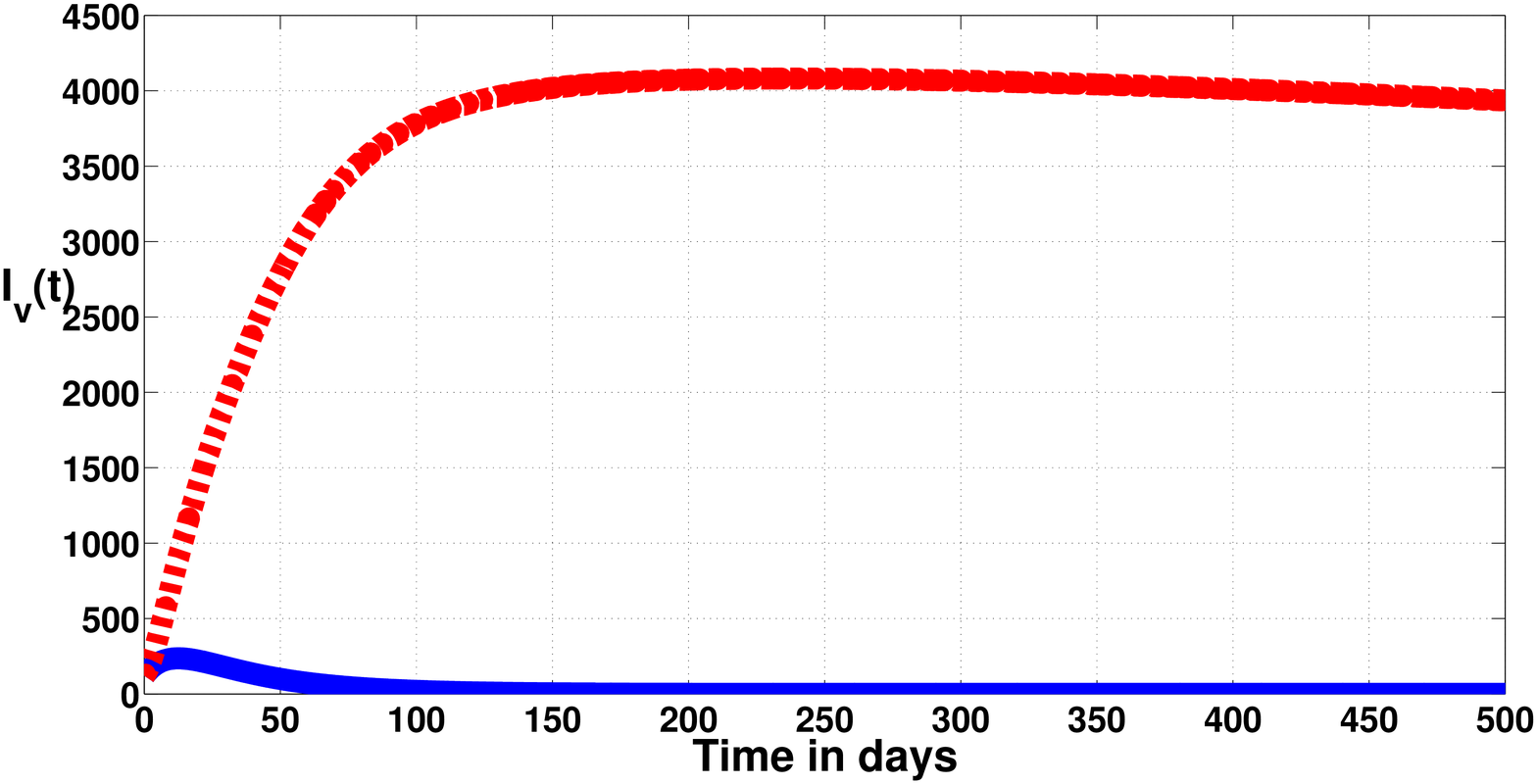}
		\caption{Solutions of model \eqref{BasicModel} of the number of infectious humans, $I_h$, and the number of infectious vectors, $I_v$, for parameter values given in the bifurcation diagram in Figure~\ref{backwardar3Basic} with $\beta_{hv}=0.008$, so $\mathcal R_0=0.4359<1$, for two different set of initial conditions. The first set of initial conditions (corresponding to the dotted trajectory) is $S_h=700$, $E_h=220$, $I_h=15$, $R_h=60$, $S_v=3000$, $E_v=400$, $I_v=120$, $E=10000$, $L=5000$ and $P=3000$. The second set of initial conditions (corresponding to the solid trajectory) is $S_h=733650$, $E_h=220$, $I_h=15$, $R_h=60$, $S_v=3000$, $E_v=400$, $I_v=120$, $E=10000$, $L=5000$ and $P=3000$. The solution for initial condition~1 approaches the locally asymptotically stable DFE point, while the solution for initial condition~2 approaches the locally asymptotically stable endemic equilibrium .} \label{IhIvBackwardplot}
	\end{center}
\end{figure}

We know from Theorem~\ref{thbifARopt} that a backward bifurcation scenario is possible for model (\ref{BasicModel}). Here, we characterize the critical value in terms of a single
parameter, the transmission rate $\beta_{hv}$, at which the saddle--node bifurcation occurs, i.e.  the threshold for the appearance of two endemic equilibria (see Figure \ref{backwardar3Basic}). 

We follow the approach given in~\cite{saetal}.  Introducing the quantities,
\begin{equation}
\label{betacrit1Basic1}
\overline{\beta}=\dfrac{\left[a\mu_h\beta_{vh}k_{10}+2k_2k_8\right]N^{0}_h }{a^2\beta_{vh}k_{10}k_{11}N^{0}_v},
\end{equation}
and,
\begin{equation}
\label{beta+-ar3opc}
\beta_{\pm}=\dfrac{N^{0}_{h}}{k_{3}k_{4}k_{10}k_{11}a^2N^{0}_{v}\beta_{vh}}
\left\lbrace \sqrt{\delta\gamma_h(a\mu_{h}\beta_{vh}k_{10}+k_{2}k_{8})}
\pm\sqrt{(-k_{2}\psi)}\right\rbrace^{2},
\end{equation}
we have the following result (see the Appendix~\ref{sadleAr3opc} for the proof).
\begin{theorem}
	\label{sdlenodeBasic}
	Assume that $\psi<0$, where $\psi$ is given by \eqref{psiar3opc}. Then the backward bifurcation phenomenon takes place in the basic model~\eqref{BasicModel} if and only if 
	\begin{equation}
	\label{Betasn2Basic}
	\bar{\beta}_{hv}<\beta_{hv}<\min(\beta_{-},\beta^{*}_{hv}) 
	\quad or\quad \max(\bar{\beta}_{hv},\beta_{+})<\beta_{hv}<\beta^{*}_{hv}.
	\end{equation} 
\end{theorem}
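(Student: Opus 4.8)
\emph{Proof proposal.} The plan is to deduce the statement from Theorem~\ref{EEBasic}(iii)(a) by rewriting the $\mathcal R_0$-conditions \eqref{R0sn2Basic1}--\eqref{R0sn2Basic2} as conditions on the single parameter $\beta_{hv}$. The elementary fact that drives everything is that, with all parameters other than $\beta_{hv}$ held fixed, \eqref{R0ar3} reads
\[
\mathcal R_0^2=C\,\beta_{hv},\qquad C:=\frac{a^2\beta_{vh}k_{10}k_{11}N^0_v}{k_3k_4k_8k_9N^0_h}>0,
\]
so that $\beta_{hv}\mapsto\mathcal R_0$ is a strictly increasing bijection of $(0,\infty)$ onto itself, with $\mathcal R_0=1$ occurring exactly at $\beta_{hv}=\beta^{*}_{hv}$ (compare \eqref{bifparam}). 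Such a bijection preserves strict inequalities and commutes with $\min$ and $\max$, so it suffices to identify which $\beta_{hv}$-values it sends to the thresholds $\mathcal R_c,\mathcal R_{1b},\mathcal R_{2b}$.

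First I would fix the terminology. In the sense of the discussion following Theorem~\ref{EEBasic}, and of Theorem~\ref{thbifARopt} with its Corollary, ``the backward bifurcation phenomenon takes place'' means that the model~\eqref{BasicModel} possesses two endemic equilibria while $\mathcal R_0<1$ (in which case the disease-free equilibrium $\mathcal E_1$ is locally asymptotically stable by Theorem~\ref{th3Basic}, and the two endemic branches have opposite stability, as in the numerical illustration). By Theorem~\ref{EEBasic}(iii)(a) this is \emph{equivalent} to: \eqref{R0sn2Basic1} holds, or \eqref{R0sn2Basic2} holds.

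Next I would carry out the four threshold identifications by substituting \eqref{betacrit1Basic1} and \eqref{beta+-ar3opc} into $\mathcal R_0^2=C\beta_{hv}$, obtaining
\[
C\,\overline{\beta}=\frac{a\mu_h\beta_{vh}k_{10}+2k_2k_8}{k_3k_4k_8k_9}=\mathcal R_c^2,\qquad
C\,\beta_{\pm}=\frac{1}{(k_3k_4)^2k_8k_9}\Bigl(\sqrt{\delta\gamma_h(a\mu_h\beta_{vh}k_{10}+k_2k_8)}\pm\sqrt{-k_2\psi}\Bigr)^2 .
\]
Under the standing hypothesis $\psi<0$ both radicands are nonnegative, so $\beta_{\pm}$, $\mathcal R_{1b}$, $\mathcal R_{2b}$ are all real; moreover the short computation $\delta\gamma_h(a\mu_h\beta_{vh}k_{10}+k_2k_8)-(-k_2\psi)=k_{10}a\mu_h\beta_{vh}(\delta\gamma_h+k_2)>0$ shows that the quantity inside the absolute value in \eqref{R_1bar3opc} is positive, that $0<\beta_{-}<\beta_{+}$, and hence that $C\beta_{-}=\mathcal R_{1b}^2$ and $C\beta_{+}=\mathcal R_{2b}^2$ with $\mathcal R_{1b}\le\mathcal R_{2b}$. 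Thus under the bijection $\overline{\beta}\leftrightarrow\mathcal R_c$, $\beta_{-}\leftrightarrow\mathcal R_{1b}$, $\beta_{+}\leftrightarrow\mathcal R_{2b}$ and $\beta^{*}_{hv}\leftrightarrow 1$.

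Finally I would apply the bijection to \eqref{R0sn2Basic1}--\eqref{R0sn2Basic2}: $\mathcal R_c<\mathcal R_0<\min(1,\mathcal R_{1b})$ transforms into $\overline{\beta}_{hv}<\beta_{hv}<\min(\beta^{*}_{hv},\beta_{-})$, and $\max(\mathcal R_c,\mathcal R_{2b})<\mathcal R_0<1$ transforms into $\max(\overline{\beta}_{hv},\beta_{+})<\beta_{hv}<\beta^{*}_{hv}$; their disjunction is exactly \eqref{Betasn2Basic}. Since every step used is an equivalence, this proves the characterization; and if $\mathcal R_c\ge 1$ then both \eqref{R0sn2Basic1}--\eqref{R0sn2Basic2} and \eqref{Betasn2Basic} are vacuously false, so the equivalence also holds in that degenerate case. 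I expect the genuine difficulty to lie not in the algebra above but in the first step: establishing rigorously that ``backward bifurcation'' is \emph{the same} condition as ``two endemic equilibria coexisting with a stable disease-free equilibrium'' --- i.e.\ that in case (iii)(a) of Theorem~\ref{EEBasic} exactly one of the two endemic equilibria is stable, so that there is genuine bistability and a genuine turning point below $\mathcal R_0=1$, and that no such phenomenon can occur outside that case. This rests on Theorem~\ref{thbifARopt} and its Corollary together with the sign analysis of the quadratic governing the endemic equilibria carried out in Appendix~\ref{sadleAr3opc}; the four substitutions and the radicand sign check are then routine.
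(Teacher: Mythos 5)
Your proposal is correct in outline but takes a genuinely different route from the paper. The paper's Appendix~\ref{sadleAr3opc} works directly with the quadratic $F(\beta_{hv},\lambda_h)=d_2\lambda_h^2+d_1\lambda_h+d_0$ from \eqref{cvsn23Basic} and re-runs the sign analysis with $\beta_{hv}$ as the free variable: it translates $d_0<0$ into $\beta_{hv}<\beta^{*}_{hv}$, $d_1>0$ into $\beta_{hv}>\bar\beta$, and then treats $d_1^2-4d_2d_0=0$ as a fresh quadratic $\alpha_2\beta_{hv}^2+\alpha_1\beta_{hv}+\alpha_0=0$ whose discriminant is nonnegative exactly when $\psi\le 0$, yielding the roots $\beta_{\pm}$. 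You instead transport the already-proved conditions of Theorem~\ref{EEBasic}(iii)(a) through the monotone bijection $\beta_{hv}\mapsto\mathcal R_0$, which is cleaner, avoids redoing the discriminant computation, and makes explicit what the paper only states as a closing remark (that Theorem~\ref{sdlenodeBasic} is equivalent to Theorem~\ref{EEBasic} via \eqref{conditionOf2EEar3opc}). Your identification of what ``backward bifurcation'' must mean here, and your observation that the real gap in both arguments is the step from ``two endemic equilibria below $\mathcal R_0=1$'' to a genuine bistability/turning-point statement, is also accurate: the paper's proof, like yours, only characterizes the coexistence of two endemic equilibria.

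There is, however, one concrete problem with your threshold identifications as written. Your first equality $C\,\overline{\beta}=\bigl(a\mu_h\beta_{vh}k_{10}+2k_2k_8\bigr)/(k_3k_4k_8k_9)$ is a correct computation from \eqref{betacrit1Basic1}, but the second equality, identifying this with $\mathcal R_c^2$, fails against the paper's definition \eqref{R_car3opc}, which has $k_3k_4k_8$ (no $k_9$) in the denominator; one finds $C\,\overline{\beta}=\mathcal R_c^2/k_9$, not $\mathcal R_c^2$. Your method thus exposes a $k_9$ inconsistency between \eqref{R_car3opc} and \eqref{betacrit1Basic1}: exactly one of the two printed formulas must carry a typo (checking $\rho_1$ and $\rho_0$ in Appendix~\ref{AppBasic3} shows the $d_i$ are consistent with \eqref{R_car3opc} as printed, which points the finger at \eqref{betacrit1Basic1}, and a parallel check shows \eqref{R_1bar3opc}--\eqref{R_2bar3opc} are likewise off from the roots of the $\rho$-quadratic by a factor of $k_9$, while your identities $C\beta_{\pm}=\mathcal R_{1b/2b}^2$ and $C\beta^{*}_{hv}=1$ do hold as printed). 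You should state the transported thresholds as $\mathcal R_c^2/C$, $\mathcal R_{1b}^2/C$, $\mathcal R_{2b}^2/C$ and note the discrepancy with the printed $\overline{\beta}$, rather than asserting an identity that the stated formulas do not satisfy; with that caveat the argument goes through.
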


The previous analysis is in line with the observation made by Wangari et \emph{al.} in \cite{Wangari2015} concerning the bifurcation thresholds of epidemiological models.
\subsection{Non-existence of endemic equilibria for $\mathcal R_{0}<1$ and $\delta=0$}
In this case, we have the following result.
\begin{theorem}
	\label{BasicdeltaNULL}
	\noindent(i) The model~\eqref{BasicModel} without disease--induced death ($\delta=0$) has no endemic equilibrium  when $\mathcal R_{0,\delta=0}\leq 1$, and has a unique endemic equilibrium otherwise.\\
	\noindent(ii) The DFE, $\mathcal{E}_{1}$, of model~\eqref{BasicModel} without disease--induced death ($\delta=0$), is globally asymptotically stable (GAS) in $\mathcal{D}$ if $\mathcal R_{0,\delta=0}<1$.
\end{theorem}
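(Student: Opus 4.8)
The plan is to treat the two items separately, each one exploiting the single structural simplification produced by $\delta=0$: with no disease-induced deaths the total host population is no longer drained by the disease, so $\dot N_h=\Lambda_h-\mu_h N_h$, a closed scalar equation, and hence $N_h(t)\to N^0_h$ exponentially for every orbit in $\mathcal D$. (Throughout we assume $\mathcal N>1$, which is exactly what makes $\mathcal E_1$ a biologically meaningful equilibrium.) A second free-standing subsystem is available as well: since $k_9-\gamma_v=\mu_v=k_8$, one has $\dot N_v=\theta P-k_8 N_v$, so $(E,L,P,N_v)$ evolve independently of the disease variables and, for $\mathcal N>1$, are globally attracted to $(E^{*},L^{*},P^{*},N^0_v)$ from \eqref{TEandDFE} (the standard stage-structured vector-demography result, cf.\ \cite{moaaca}); in particular $N_v(t)\to N^0_v$.

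For item (i) I would start from the endemic-equilibrium polynomial constructed in Appendix~\ref{AppBasic3}, where the positive steady states of \eqref{BasicModel} are put in one-to-one correspondence with the positive roots of a quadratic $P(x)=d_2x^2+d_1x+d_0$ in a scalar variable $x$ (e.g.\ the host force of infection $\lambda_h$, or equivalently $I^{*}_h$), the $d_i$ being explicit in the $k_j$ and $\delta$. The key observation is that $d_2$ is proportional to $\delta$ — it is precisely the disease-induced-death contribution responsible for the $\mathcal A_1>0$ regime of Theorem~\ref{thbifARopt} — so at $\delta=0$ one has $d_2=0$ and $P$ degenerates to the affine equation $d_1x+d_0=0$. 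Since $d_1>0$ (its explicit form is a sum of positive terms) and, by the standard reduction identity, $d_0<0\iff\mathcal R_{0,\delta=0}>1$, $d_0=0\iff\mathcal R_{0,\delta=0}=1$, $d_0>0\iff\mathcal R_{0,\delta=0}<1$, the unique root $x^{*}=-d_0/d_1$ is positive exactly when $\mathcal R_{0,\delta=0}>1$ and non-positive otherwise. Back-substituting $x^{*}$ into the closed-form expressions for the remaining coordinates (all increasing, positive functions of $x$ on $x>0$, as in Appendix~\ref{AppBasic3}) gives the claimed unique endemic equilibrium when $\mathcal R_{0,\delta=0}>1$ and none when $\mathcal R_{0,\delta=0}\le1$.

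For item (ii) the plan is a comparison argument. Fix $\varepsilon>0$; by the two convergences noted above, for $t$ large we have $N_h(t)\ge N^0_h-\varepsilon$ and $N_v(t)\le N^0_v+\varepsilon$, whence, using $S_h\le N_h$ and $S_v\le N_v$,
\[
\lambda_h S_h\le a\beta_{hv}(\eta_v E_v+I_v),\qquad
\lambda_v S_v\le \frac{a\beta_{vh}(N^0_v+\varepsilon)}{N^0_h-\varepsilon}\,(\eta_h E_h+I_h).
\]
Hence $(E_h,I_h,E_v,I_v)^{\top}$ is eventually bounded above, componentwise, by the solution of the linear cooperative (Metzler) system $\dot y=(F_\varepsilon-V)y$, where $V$ is the transition matrix of the infected subsystem and $F_\varepsilon$ the new-infection matrix with $N^0_v/N^0_h$ replaced by $(N^0_v+\varepsilon)/(N^0_h-\varepsilon)$. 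Since $\rho(F_0V^{-1})=\mathcal R_{0,\delta=0}<1$, continuity of the spectral radius gives $\rho(F_\varepsilon V^{-1})<1$, equivalently $s(F_\varepsilon-V)<0$ by \cite{vawa02}, for $\varepsilon$ small; the comparison principle for cooperative systems then forces $(E_h,I_h,E_v,I_v)\to0$. Feeding this back, $R_h\to0$, $S_h\to N^0_h$, $S_v\to N^0_v$, so every orbit in $\mathcal D$ converges to $\mathcal E_1$; together with the local asymptotic stability of $\mathcal E_1$ from Theorem~\ref{th3Basic}(ii) (valid since $\mathcal R_{0,\delta=0}<1$), this yields global asymptotic stability on $\mathcal D$.

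The step I expect to need the most care is the ``eventually dominated by $\dot y=(F_\varepsilon-V)y$'' reduction in item (ii): one must make the uniform lower bound on $N_h$ and upper bound on $N_v$ rigorous (this is where $\delta=0$ is essential — with $\delta>0$, $N_h$ need not return to $N^0_h$ and the bound on $\lambda_v S_v$ degrades, which is exactly the mechanism permitting backward bifurcation), verify that $F_\varepsilon-V$ is Metzler so that the comparison theorem applies, and use continuity to pass from $\mathcal R_{0,\delta=0}<1$ to $s(F_\varepsilon-V)<0$. An alternative that removes the $\varepsilon$-bookkeeping is to pass to the limiting autonomous system $N_h\equiv N^0_h$, $N_v\equiv N^0_v$, $(E,L,P)\equiv(E^{*},L^{*},P^{*})$, use the linear Lyapunov function $\mathbf w^{\top}V^{-1}\mathbf x$ with $\mathbf w$ the left Perron eigenvector of $V^{-1}F_0$ (so that $\dot V\le(\mathcal R_{0,\delta=0}-1)\,\mathbf w^{\top}\mathbf x\le0$, using $S_h\le N_h$, $S_v\le N^0_v$), and then invoke the theory of asymptotically autonomous systems; the obstacle is then shifted to checking the dissipativity/limit-set hypotheses of that theory.
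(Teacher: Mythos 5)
Your proof of item~(i) rests on a claim that is false for the quadratic of Appendix~\ref{AppBasic3}. There the leading coefficient is $d_2=-k_{2}\left[k_{10}a\mu_{h}\beta_{vh}+k_{2}k_{8}\right]$ with $k_2=\mu_hk_4+\gamma_h(\mu_h+\sigma)>0$, so $d_2$ is \emph{not} proportional to $\delta$ and is strictly negative even at $\delta=0$; the equation does not degenerate to $d_1x+d_0=0$ by the mechanism you describe. Likewise $d_1=k_3^2k_4^2k_8\mu_h(\mathcal R_0^2-\mathcal R_c^2)$ is not a sum of positive terms: at $\delta=0$ one has $\mathcal R_c^2=2+k_{10}a\mu_h\beta_{vh}/(k_3k_4k_8)>2$, so $d_1<0$ whenever $\mathcal R_0^2<2$. (Your sign convention for $d_0$ is also opposite to the paper's, where $d_0$ has the sign of $\mathcal R_0^2-1$.) This matters because a quadratic with $d_2<0$ and $d_0<0$ can still have two positive roots when $d_1>0$ and $d_1^2-4d_2d_0>0$ --- that is precisely the backward-bifurcation regime of Theorem~\ref{EEBasic} --- so your argument as written does not exclude endemic equilibria for $\mathcal R_{0,\delta=0}\le 1$. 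The actual simplification at $\delta=0$ is different: $k_2=k_3k_4$, hence $N_h^{*}=\Lambda_h/\mu_h$ is independent of $\lambda_h^{*}$, a common factor $(\mu_h+\lambda_h^{*})$ cancels from the equilibrium relation (equivalently, the quadratic acquires the spurious root $\lambda_h^{*}=-\mu_h$), and one is left with the linear equation $p_1\lambda_h^{*}+p_0=0$ with $p_1>0$ and $p_0$ of the sign of $1-\mathcal R_{0,\delta=0}^{2}$. This is exactly the computation carried out in Appendix~\ref{appBasic4}; your conclusion is correct, but the step that produces it needs to be redone along these lines.

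Item~(ii) is essentially sound but takes a genuinely different route from the paper. The paper works on the positively invariant set $\mathcal D_1=\{S_h\le N_h^{0},\,S_v\le N_v^{0}\}$ and uses the linear Lyapunov function $\mathcal G=q_1E_h+q_2I_h+q_3E_v+q_4I_v$, obtaining $\dot{\mathcal G}\le(\mathcal R_{0,\delta=0}^{2}-1)E_h\le 0$ and concluding by LaSalle's invariance principle. Your comparison argument --- exploit $\dot N_h=\Lambda_h-\mu_hN_h$ and the closed $(E,L,P,N_v)$ subsystem to get eventual bounds $N_h\ge N_h^{0}-\varepsilon$ and $N_v\le N_v^{0}+\varepsilon$, dominate the infected block by the cooperative linear system $\dot y=(F_\varepsilon-V)y$, and pass from $\rho(F_0V^{-1})=\mathcal R_{0,\delta=0}<1$ to $s(F_\varepsilon-V)<0$ by continuity --- would also work, at the cost of the $\varepsilon$-bookkeeping you identify and of actually proving (rather than citing) the global convergence of the vector-demography subsystem when $\mathcal N>1$. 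The Lyapunov route is shorter because the only inputs are the invariance of $\mathcal D_1$ and the bounds $S_h\le N_h^{0}$, $S_v\le N_v^{0}$, with no limiting-system or comparison machinery.
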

\begin{proof}
	See appendix~\ref{appBasic4}.
\end{proof}

\subsection{Sensitivity analysis}
We carried out the sensitivity analysis to determine the model robustness to parameter values. That is to help us to know the parameters that are most influential in determining disease dynamics. Following the approach by Marino \emph{et al.}~\cite{Marino2008} and Wu \emph{et al.}~\cite{Wu2013}, partial rank correlation coefficients (PRCC) between the basic reproduction number $\mathcal R_0$ and each parameter are derived from 5,000 runs of the Latin hypercube sampling (LHS) method, which is a stratified Monte Carlo sampling method that divides each parameter's range into N equal intervals and randomly draws one sample from each interval~\cite{Wu2013,Stein1987}. The parameters are assumed to be random variables with uniform distributions with their mean value listed in Table~\ref{octab2}.

With these 5,000 runs of LHS, the derived distribution of $\mathcal R_0$ is given in Figure~\ref{histogramofR0opcar3opc}. This sampling shows that the mean of $\mathcal R_0$ is 1.9583 and the standard deviation is 1.8439. This implies that for the mean of parameter values given in Table~\ref{octab2}, we may be confident that the model predicts an endemic state, since the basic reproduction number is greater that unity. The probability that $\mathcal R_0>1$ (the disease--free equilibrium is unstable and there is exactly one endemic equilibrium point) is 64.48\%. 
\begin{figure}[H]
	\begin{center}
		\includegraphics[width=\textwidth]{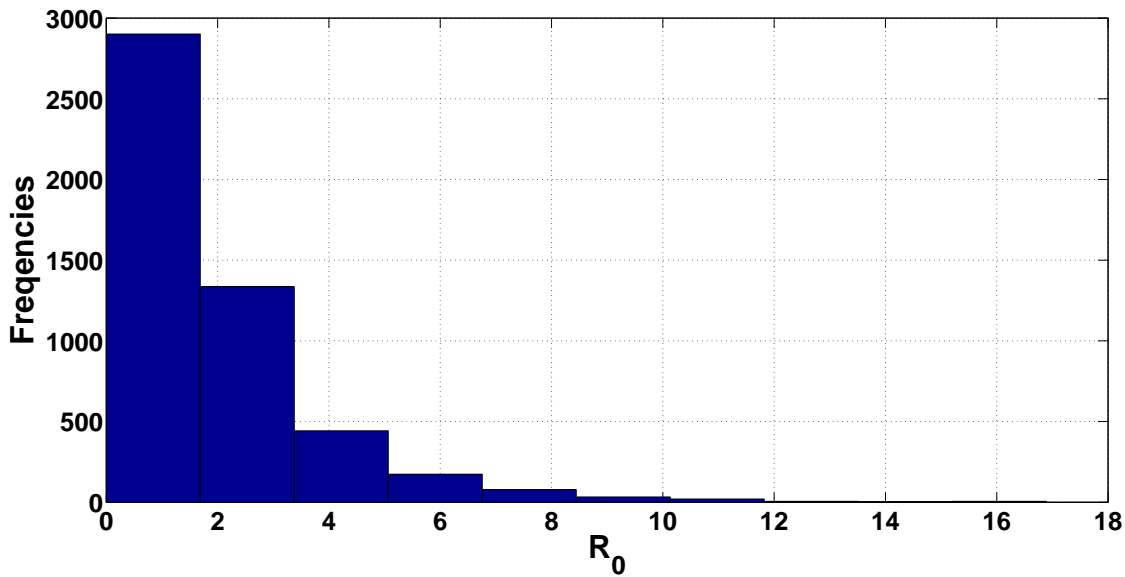}
		\caption{Sampling distribution of $\mathcal R_0$ from 5,000 runs of Latin hypercube sampling. The mean of $\mathcal R_0$ is 1.9583 and the standard deviation is 1.8439. Furthermore, $\mathbb{P}(\mathcal R_0\geq 1)=64.48\%$.  \label{histogramofR0opcar3opc}}
	\end{center}
\end{figure}

We also evaluate the probabilities that conditions (i), (ii) and (iii) in Theorem~\ref{EEBasic} are satisfied. Let us set $\mathbb{P}[X]$ the probability of $X$, 
and the sets of parameter values for which $(\mathcal N>1)$ is true by $\Phi_{1}$, the sets of parameter values for which $(\mathcal R_c<\mathcal R_0<\min(1,\mathcal R_{1b}))$ or $(\max(\mathcal R_c,\mathcal R_{2b})<\mathcal R_{0}<1)$ by $\Phi_2$, and the sets of parameter values for which $\left\lbrace (\mathcal R_0 = R_{1b})\,\,or\,\,(\mathcal R_0 = R_{2b})\right\rbrace$ by $\Phi_3$,

\begin{subequations}
	\begin{align}
	\label{Tear3ccopc}
	\mathbb{P}\left[\neg\Phi_1\right]=\mathbb{P}\left[\mathcal N\leq 1\right] &=0.0020 , \\
	\label{Tear4ccopc}
	\mathbb{P}\left[\Phi_1\right]=\mathbb{P}\left[\mathcal N> 1\right] &=0.9980 , \\
	\label{P0EE12ar3ccopc}
	\mathbb{P}\left[\Phi_1\,\,\text{and}\,\,(\mathcal R_0<1)\,\,\text{and}\,\, \Phi_2\,\, \right] &=0.0026,\\
	\label{P0EE13ar3ccopc}
	\mathbb{P}\left[\Phi_1\,\,\,\text{and} \quad(\mathcal R_0 < 1)\,\,\, \text{and} 
	\,\,\, \Phi_3 \right] &=0 , \\
	\label{P0EE14ar3ccopc}
	\mathbb{P}\left[\Phi_1\,\,\,\text{and} (\mathcal R_0<1)\,\,\,\text{and}\,\,\,\neg\Phi_2\,\,\, \text{and} 
	\,\,\, \neg\Phi_3 \right] &=0.3526 , \\
	\label{P0EE2ar3ccopc}
	\mathbb{P}\left[\Phi_1\quad\text{and} \quad(\mathcal R_0 < 1)\right] &=0.3552 , \\
	\label{P0EE3ar3ccopc}
	\mathbb{P}\left[\Phi_1\quad\text{and} \quad(\mathcal R_0 \geq 1)\right] &=0.6448.
		\end{align}
\end{subequations}

Therefore, the probability that the trivial disease--free equilibrium is locally asymptotically stable
is 0.0020 (from \eqref{Tear3ccopc}), the probability that the disease free equilibrium point is locally asymptotically stable is 0.3552 (from \eqref{P0EE2ar3ccopc}), the probability that the disease free equilibrium point is locally asymptotically stable and (i) there are no endemic equilibrium points is 35.26\%
(ii) there are exactly one endemic equilibrium point is 0 (from \eqref{P0EE13ar3ccopc}), (iii) there are exactly two endemic equilibrium points is 0.0026 (from \eqref{P0EE12ar3ccopc}). This implies that for the ranges of parameter values given in Table~\ref{vaueR0ar3opc}, the disease-free equilibrium point is likely to be locally asymptotically stable and the probability of co-existence of a locally asymptotically stable endemic equilibrium  point (occurrence of backward bifurcation phenomenon) is  very small and insignificant.

We now use sensitivity analysis to analyse the influence of each parameter on the basic reproductive number. From the previously sampled parameter values, we compute the PRCC between $\mathcal R_0$ and each parameter of model (\ref{AR3}). The parameters with large PRCC values ($>0.5$ or $<-0.5$) statistically have the most influence \cite{Wu2013}. 

\begin{table}[H]
	\begin{center}
		\caption{Partial Rank Correlation Coefficients between $\mathcal R_0$ and each parameters of model (\ref{BasicModel}).\label{prccR0ar3opc}}
		\begin{tabular}{lcccccc}
			\hline
			Parameter&Correlation &Parameter&Correlation &Parameter&Correlation \\
			&Coefficients&&Coefficients&&Coefficients\\
			\hline
		$\beta_{vh}$&0.7345&$\Gamma_L$&0.3813&$\mu_h$&0.1717\\
		$\beta_{hv}$&0.7285&$\Gamma_E$&0.3698&$\gamma_v$&0.0872\\
		$a$&0.6454&$s$&0.3733&$\eta_v$&0.06891\\
		$\theta$&0.6187&$\mu_P$&\textbf{--}0.2565&$\mu_L$&\textbf{--}0.0556\\
		$\mu_v$&\textbf{--}0.5521&$\eta_h$&0.2331&$\mu_b$&0.0531\\
		$\Lambda_h$&\textbf{--}0.5435&$\gamma_h$&\textbf{--}0.2204&$\mu_E$&\textbf{--}0.0022\\
		$l$&0.5144&$\sigma$&\textbf{--}0.1867&$\delta$&0.0003\\
			\hline
		\end{tabular}
	\end{center}
\end{table}

\begin{figure}[H]
	\begin{center}
		%\begin{subfigure}[b]{1\textwidth}
			%\begin{center}
				\includegraphics[width=\textwidth]{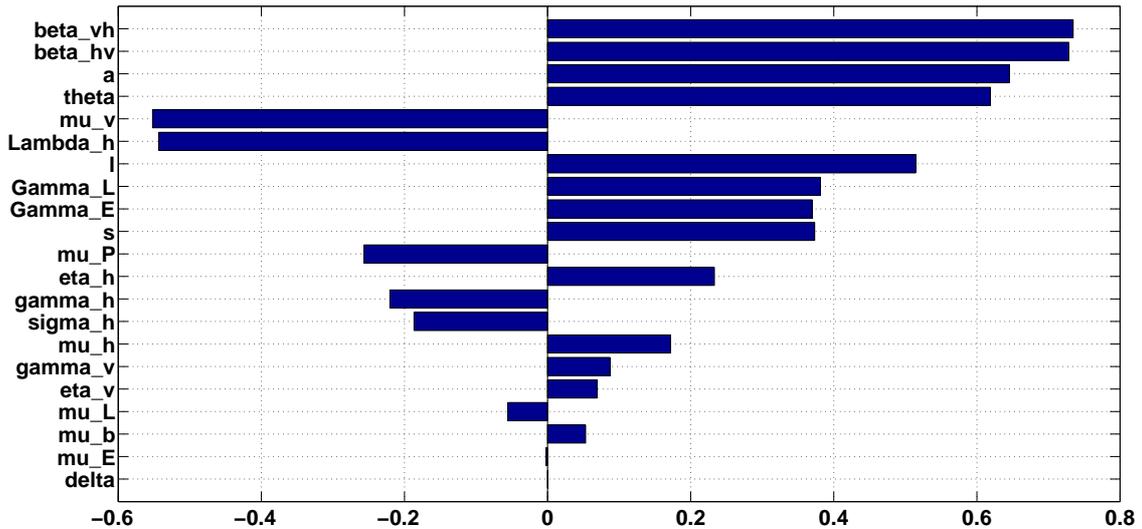}
				%\caption{Partial rank correlation coefficients for $\mathcal R_0$}
				\label{F:PRCCR0opc2}
			%\end{center}
		%\end{subfigure}
		\caption{Partial rank correlation coefficients for $\mathcal R_0$} \label{F:SAresultsar3opc2}
	\end{center}
\end{figure}

The results, displayed in Table \ref{prccR0ar3opc} and Figure \ref{F:SAresultsar3opc2}, show that the
basic reproduction number $\mathcal R_0$ outcome measures are sensitive to changes in the parameters
$\beta_{vh}$, $\beta_{hv}$, $a$, $\theta$, $\mu_v$, $\Lambda_h$ and $l$. 

The sensitivity results suggest that the control of the epidemic of arboviral diseases pass through a combination of immunization against arbovirus, individual protection against vector bites, treatment of infected human, and vector control mechanisms. %through utilisation of adulticide and larvicide

\section{A Model for Optimal Control}
\label{sec2ar3opc}

There are several possible interventions in order to reduce or limit the proliferation of mosquitoes and the explosion of the number of infected humans and mosquitoes. 
In addition of controls used in \cite{moaaHee2012}, we add vaccination and the control of adult vectors as control variables to reduce or even eradicate the disease. So we introduce five time dependent controls:
\begin{itemize}
	\item[1.] The first control $0\leq u_1(t)\leq 1$ denotes the percentage of susceptible individuals that one decides to vaccinate at time t. A parameter $\omega$ associated to the control $u_1(t)$ represents the waning immunity process \cite{HelenaSofiaRodrigues2014}.
	\item[2.] The second control $0\leq u_2(t)\leq 1$ represents efforts made to protect human from mosquito bites. It mainly consists  to the use of mosquito nets or wearing appropriate clothes \cite{moaaHee2012}. Thus we modify the infection term as follows: 
	\begin{equation}
	\label{lambdac}
	\lambda^{c}_h=(1-\alpha_1 u_2(t))\lambda_h,\;\;\;, \lambda^{c}_v=(1-\alpha_1 u_2(t))\lambda_v
	\end{equation}
	where $\alpha_1$ measures the effectiveness of the  prevention measurements against mosquito bites. %We assumed $\alpha_1=1$.
	\item[3.] The third control $0\leq u_3(t)\leq 1$ represents efforts made for treatment. It mainly consists in isolating infected patients in hospitals, installing an anti-mosquito electric diffuser in the hospital room, or symptomatic treatments~\cite{moaaHee2012}. Thus we modify the recovery rate such that $\sigma^{c}_{h}:=\sigma_{h}+\alpha_2u_3$. $\alpha_2$ is the effectiveness of the anti-arboviral diseases drugs with $\alpha_2=0.3$ \cite{moaaHee2012}. Note that this control also permit to reduce the disease-induced death.
	\item[4.] The fourth control $0\leq u_4(t)\leq 1$ represents  mosquitoes adulticiding effort with killing efficacy $c_m$. Thus the mosquito natural mortality rate becomes $\mu^{c}_v=\mu_v+c_mu_4(t)$.
	\item[5.] The fifth control $0\leq u_5(t)\leq 1$ represents the effect of interventions used for the vector control. It mainly consists in the reduction of breeding sites with chemical application methods, for instance using larvicides like BTI (\emph{Bacillus Thuringensis Israelensis}) which is a biological larvicide, or by introducing larvivore fish. This control focuses on the reduction of the number of larvae, and thus eggs, of any natural or artificial water-filled container \cite{moaaHee2012}. Thus the eggs and Larvae natural mortality rate become $\mu^{c}_E=\mu_E+\eta_1u_5(t)$ and $\mu^{c}_L=\mu_L+\eta_2u_5(t)$ where $\eta_1$, $\eta_2$, represent the chemical eggs and larvae mortality rate, respectively~\cite{moaaHee2012}.
\end{itemize}

Note that $0\leq u_i\leq 1$, for $i = 1,\hdots,5$, means that when the control is zero there is no any effort invested (i.e. no control) and when it is one, the maximum control effort is invested. 

Therefore, our optimal control model of arboviral diseases reads as
\begin{equation}
\label{AR3optimal}
\left\lbrace \begin{array}{ll}
\dot{S}_h&=\Lambda_h-\left[ (1-\alpha_1u_2(t))\lambda_h+\mu_h +u_1(t)\right] S_h +\omega u_1(t) R_h\\
\dot{E}_h&=(1-\alpha_1 u_2(t)) \lambda_hS_h-(\mu_h+\gamma_h)E_h\\
\dot{I}_h&=\gamma_hE_h-\left[\mu_h+(1-\alpha_2u_3(t))\delta+\sigma+\alpha_2u_3(t)\right] I_h\\
\dot{R}_h&=(\sigma+\alpha_2u_3(t))I_h+u_1S_h-(\mu_h+\omega u_1)R_h\\
\dot{S}_v&=\theta P-(1-\alpha_1 u_2(t))\lambda_vS_v-(\mu_v+c_mu_4(t))S_v\\
\dot{E}_v&=(1-\alpha_1 u_2(t))\lambda_vS_v-(\mu_v+\gamma_v+c_mu_4(t))E_v\\
\dot{I}_v&=\gamma_vE_v-(\mu_v+c_mu_4(t))I_v\\
\dot{E}&=\mu_b\left(1-\dfrac{E}{\Gamma_E} \right)(S_v+E_v+I_v)-(s+\mu_E+\eta_1u_5(t))E\\
\dot{L}&=sE\left(1-\dfrac{L}{\Gamma_L} \right)-(l+\mu_L+\eta_2u_5(t))L\\
\dot{P}&=lL-(\theta+\mu_P)P\\
\end{array}\right.
\end{equation}
with initial conditions given at $t =0$. 

For the non-autonomous system~\eqref{AR3optimal}, the rate of change of the total populations of humans and adults vectors is given, respectively, by
\begin{equation}
\label{ratePopulOpcFull}
\left\lbrace \begin{array}{ll}
\dot{N}_h&=\Lambda_h-\mu_hN_h-(1-\alpha_2u_3(t))\delta I_h \\
\dot{N}_v&=\theta P-(\mu_v+c_mu_4(t))N_v \\
\end{array}\right.
\end{equation}
For bounded Lebesgue measurable controls and non-negative initial conditions, non-negative bounded solutions to the state system exist~\cite{Lukes}.

%\section{Mathematical analysis}
%\label{optimalcontrol}

The objective of control is to minimize: the number of symptomatic humans infected with arboviruses (that is, to reduce sub-population $I_h$ ), the number of vector ($N_v$ ) and the number of eggs and larvae (that is, to reduce sub-population $E$ and $L$, respectively), while keeping the costs of the control as low as possible. 

To achieve this objective we must incorporate the relative costs associated with each policy (control) or combination of policies directed towards controlling the spread of arboviral diseases. We define the objective function as 
\begin{equation}
\label{OCF}
\begin{split}
&J(u_1,u_2,u_3,u_4,u_5)
=\int^{t_{f}}_{0}\left[D_1I_h(t)+D_2N_v(t)+D_3E(t)+D_4L(t)+\sum\limits^{5}_{i=1}B_iu^{2}_{i}(t) \right]dt 
\end{split} 
\end{equation}
and the control set 
$$
\Delta=\lbrace (u_1,u_2,u_3,u_4,u_5)|u_i(t)\,\text{is Lebesgue measurable on }[0,t_{f}],\,
0\leq u_i(t)\leq 1, i=1,\hdots,5\rbrace. 
$$ %with $u(t)=\left( u_1(t),u_2(t),u_3(t),u_4(t),u_5(t)\right) $.

The first fourth terms in the integrand $J$ represent benefit of $I_h$, $N_v$, $E$ and $L$ populations, describing the comparative importance of the terms in the functional.
A high value of $ D_1$ for example, means that it is more important to reduce the burden of disease as reduce the costs related to all control strategies~\cite{bbMBE2011}.
Positive constants $B_i$, $i=1,\hdots,5$ are weight for vaccination, individual protection (human), treatment and vector control effort respectively, which regularize the optimal control. In line with the authors of some studies on the optimal control (see \cite{moaaHee2012,HelenaSofiaRodrigues2014,Dias2015,Adams2004,bbMBE2011,Zaman2008,Jung2002,Yusuf2012}), we choose a linear function for the cost on infection, $D_1I_h$, $D_2N_v$, $D_3E$, $D_4L$, and quadratic forms for the cost on the controls $B_1u^{2}_{1}$, $B_2u^{2}_{2}$, $B_3u^{2}_{3}$, $B_4u^{2}_{4}$, and $B_5u^{2}_{5}$. This choice can be justified by the following arguments:
\begin{enumerate}
	\item[(i)] An epidemiological control can be likened to an expenditure of energy, by bringing to the applications of physics in control theory;
	\item[(ii)] In a certain sense, minimize $u_i$ is like minimize $u^{2}_i$, because $u_i>0$, $i=1,\hdots, 5$.
	\item[(iii)] The quadratic controls give rise to controls as feedback law, which is convenient for calculations.
\end{enumerate}

We solve the problem using optimal control theory.
\begin{theorem}
	Let $X=(S_h,E_h,I_h,R_h,S_v,E_v,I_v,E,L,P)$. The following set
	\footnotesize
	\[
	\begin{split}
	\Omega&=\left\lbrace
	X\in\mathbf{R}^{10}:
	N_h\leq\dfrac{\Lambda_h}{\mu_h}; E\leq \Gamma_E; L\leq \Gamma_L;P\leq\dfrac{l\Gamma_L}{k_7};N_v\leq \dfrac{\theta l\Gamma_L}{k_7k_8}\right\rbrace\\
	\end{split}
	\]
	\normalsize
	is positively invariant under system \eqref{AR3optimal}.
\end{theorem}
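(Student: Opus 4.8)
The plan is to show that $\Omega$ is positively invariant by establishing two facts for the controlled system~\eqref{AR3optimal}: first, that the non-negative orthant $\mathbb R^{10}_+$ is invariant (so solutions starting with non-negative data stay non-negative), and second, that each of the upper bounds defining $\Omega$ is preserved along trajectories. Since all the control functions satisfy $0\le u_i(t)\le 1$ and the parameters $\alpha_1,\alpha_2,c_m,\eta_1,\eta_2$ are non-negative, the right-hand sides of~\eqref{AR3optimal} are of the same structural type as those of the autonomous model~\eqref{BasicModel}, so the arguments mirror the ones already invoked for the region $\mathcal D$.

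For non-negativity, I would check that on each coordinate hyperplane $\{x_k=0\}$ the corresponding vector-field component $\dot x_k$ is non-negative, so the boundary of the orthant cannot be crossed outward. For instance $\dot S_h\big|_{S_h=0}=\Lambda_h+\omega u_1 R_h\ge 0$, $\dot E_h\big|_{E_h=0}=(1-\alpha_1u_2)\lambda_h S_h\ge 0$ (note $1-\alpha_1u_2\ge 0$ since $\alpha_1\le 1$ and $u_2\le 1$), $\dot I_h\big|_{I_h=0}=\gamma_h E_h\ge 0$, and similarly for $R_h,S_v,E_v,I_v,E,L,P$; the coefficient $1-\alpha_2u_3$ multiplying $\delta$ in the $I_h$ equation is likewise non-negative. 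This gives invariance of $\mathbb R^{10}_+$ by the standard tangency/flow-invariance criterion.

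For the upper bounds, I would use comparison/differential-inequality arguments on the total populations and on the aquatic stages, exactly as in the derivation of $\mathcal D$. From~\eqref{ratePopulOpcFull}, $\dot N_h=\Lambda_h-\mu_h N_h-(1-\alpha_2u_3)\delta I_h\le \Lambda_h-\mu_h N_h$, so $N_h(t)\le\Lambda_h/\mu_h$ is preserved. For the eggs, $\dot E=\mu_b\big(1-E/\Gamma_E\big)N_v-(s+\mu_E+\eta_1u_5)E$, and at $E=\Gamma_E$ the first term vanishes, giving $\dot E\big|_{E=\Gamma_E}=-(s+\mu_E+\eta_1u_5)\Gamma_E<0$, so $\{E\le\Gamma_E\}$ is invariant; the same reasoning with the logistic factor $1-L/\Gamma_L$ yields invariance of $\{L\le\Gamma_L\}$. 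Then $\dot P=lL-(\theta+\mu_P)P=lL-k_7P\le l\Gamma_L-k_7P$ gives $P\le l\Gamma_L/k_7$, and finally $\dot N_v=\theta P-(\mu_v+c_mu_4)N_v\le\theta P-k_8 N_v\le \theta l\Gamma_L/k_7-k_8 N_v$ gives $N_v\le\theta l\Gamma_L/(k_7k_8)$. Each bound is handled after the ones it depends on, so the chain $E\to L\to P\to N_v$ closes cleanly.

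The only mild subtlety — and what I would flag as the main point to be careful about — is that the controls are merely Lebesgue measurable in $t$, not continuous, so the vector field is only Carathéodory; one should therefore phrase the boundary arguments as differential inequalities valid for a.e.\ $t$ and invoke an absolutely-continuous comparison principle rather than classical flow-invariance. Existence, uniqueness and boundedness of solutions on $[0,t_f]$ for such right-hand sides is already granted by the citation to~\cite{Lukes}, so this causes no real difficulty; the estimates above then hold for the (a.e.-differentiable) solution and the invariance of $\Omega$ follows.
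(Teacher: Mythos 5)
Your proposal is correct and follows essentially the same route as the paper: non-negativity of the orthant via the structure of the vector field on the boundary (the paper phrases this through the differential inequalities $\dot x_k\ge -c_k x_k$, which amount to the same quasi-positivity check), followed by the comparison chain $N_h\le\Lambda_h/\mu_h$, $E\le\Gamma_E$, $L\le\Gamma_L$, $P\le l\Gamma_L/k_7$, $N_v\le\theta l\Gamma_L/(k_7k_8)$ obtained by bounding the controlled right-hand sides by those of the uncontrolled model. If anything, your write-up is more explicit than the paper's (which delegates the upper bounds to ``it is easy to show'' plus Gronwall), and your remark about the merely measurable controls requiring an a.e.\ differential-inequality formulation is a legitimate refinement rather than a deviation.
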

\begin{proof}
	On the one hand, one can easily see that it is possible to get,
	\begin{equation}
	\label{}
	\left\lbrace  \begin{array}{ll}
	\dot{S}_h&\geq-\left(\lambda_h+\mu_h\right)S_h\\
	\dot{E}_h&\geq -(\mu_h+\gamma_h)E_h\\
	\dot{I}_h&\geq -(\mu_h+\delta+\sigma)I_h\\
	\dot{R}_h&\geq -\mu_h R_h\\
	\dot{E}&\geq -(\dfrac{\mu_b}{K_E}+s+\mu_E+\eta_1)E\\
	\dot{L}&\geq -(\dfrac{s}{K_L}+l+\mu_L+\eta_2)L\\
	\dot{P}&\geq -(\theta+\mu_P+\eta_3)P\\
	\dot{S}_v&\geq -(\lambda_v+\mu_v)S_v\\
	\dot{E}_v&\geq -(\mu_v+\gamma_v)E_v\\
	\dot{I}_v&\geq -\mu_v I_v\\
	\end{array}\right.
	\end{equation}
	for $\left(S_h(0),E_h(0),I_h(0),R_h(0),E(0),A(0),P(0),S_v(0),E_v(0),I_v(0)\right)\geq 0$. Thus, solutions with initial value in $\Omega$ remain nonnegative for all $t \geq 0$. On the other hand, we have
	\begin{equation}
	\label{}
	\left\lbrace \begin{array}{ll}
	\dot{N}_h&\leq \Lambda_h-\mu_h N_h \\
	\dot{N}_v&\leq \theta P-\mu_vN_v\\
	\dot{E}&\leq \mu_b\left(1-\dfrac{E}{K_E} \right)(S_v+E_v+I_v)-(s+\mu_E)E\\
	\dot{L}&\leq sE\left(1-\dfrac{L}{K_L} \right)-(l+\mu_L)L\\
	\dot{P}&\leq lL-(\theta+\mu_P)P\\
	\end{array}\right.
	\end{equation}
	The right hand side of the inequalities correspond to the transmission model without control, and it is easy to show that solutions remain in $\Omega$. Then using Gronwall's inequality, we deduce that solutions of \eqref{AR3optimal} are bounded.\hfill
\end{proof}

\subsection{Existence of an optimal control}

The existence of an optimal control can be obtained by using a result of Fleming and Rishel \cite{Fleming}.
\begin{theorem}
	\label{existenceofOC}
	Consider the control problem with system \eqref{AR3optimal}. \\
	There exists $u^{\star}=\left(u^{\star}_1,u^{\star}_2,u^{\star}_3,u^{\star}_4,u^{\star}_5\right)$ such that
	\[
	\min\limits_{(u_1,u_2,u_3,u_4,u_5)\in\Delta}J(u_1,u_2,u_3,u_4,u_5)
	=J(u^{\star}_1,u^{\star}_2,u^{\star}_3,u^{\star}_4,u^{\star}_5)
	\]
\end{theorem}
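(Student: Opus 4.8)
The plan is to verify the classical hypotheses of the Fleming--Rishel existence theorem \cite{Fleming} (see also \cite{Lukes}), which guarantee a minimizer for a Bolza-type optimal control problem. Concretely one must check: (a) the set of admissible control-and-state pairs is nonempty; (b) the admissible control set $\Delta$ is closed and convex; (c) the right-hand side of the state system \eqref{AR3optimal} is bounded above by a function that is affine in the state and control variables; (d) the integrand of the cost functional \eqref{OCF} is convex with respect to the control vector $(u_1,\dots,u_5)$; and (e) the integrand is coercive, i.e. there exist constants $c_1>0$, $c_2$, and a power $p>1$ with $L(X,u)\ge c_1\|u\|^{p}-c_2$.

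First I would establish (a): for any measurable control $u=(u_1,\dots,u_5)\in\Delta$ and any non-negative initial data, the non-negativity and boundedness results quoted above (together with \cite{Lukes}) yield a solution $X(\cdot)$ defined on all of $[0,t_f]$ that stays in the compact invariant region $\Omega$; hence the admissible set is nonempty. Condition (b) is immediate, since $\Delta$ is by definition the set of Lebesgue-measurable functions valued in the cube $[0,1]^5$, a closed, bounded, convex subset of $L^2([0,t_f])^5$. For (c) I would write the vector field of \eqref{AR3optimal} as $F(t,X,u)$ and note that every nonlinear term is a product of state components a priori confined to $\Omega$ (for instance the incidence terms $(1-\alpha_1u_2)\lambda_hS_h$ and the logistic recruitment $\mu_b(1-E/\Gamma_E)(S_v+E_v+I_v)$) multiplied by control components lying in $[0,1]$; consequently there is a constant $M$, depending only on the parameters and on $\Omega$, with $\|F(t,X,u)\|\le M\bigl(1+\|X\|+\|u\|\bigr)$ on the relevant region. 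For (d), the integrand of \eqref{OCF} splits as a term depending only on the state, $D_1I_h+D_2N_v+D_3E+D_4L$, plus the quadratic form $\sum_{i=1}^{5}B_iu_i^2$; since each $B_i>0$, this form is strictly convex in $u$, so the integrand is convex in $u$. Finally (e) follows from the same decomposition: on $\Omega$ the state-dependent part is non-negative (hence bounded below), while $\sum_{i=1}^{5}B_iu_i^2\ge\bigl(\min_i B_i\bigr)\|u\|_2^2$, so coercivity holds with $p=2$, $c_1=\min_i B_i>0$, and a suitable constant $c_2$. Invoking the Fleming--Rishel theorem then produces an optimal control $u^{\star}=(u_1^{\star},\dots,u_5^{\star})\in\Delta$ minimizing $J$, which is precisely the conclusion of Theorem~\ref{existenceofOC}.

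I expect the only genuine work to be the careful verification of the affine-growth bound in (c): it is routine but tedious, since one must track the bilinear incidence terms and the logistic egg/larva terms and confirm that the a priori invariance of $\Omega$ --- established in the preceding theorem --- renders all these nonlinearities effectively bounded, so that the growth estimate reduces to one linear in $\|X\|$ and $\|u\|$. The remaining conditions require no real effort: convexity and coercivity are immediate consequences of the quadratic cost on the controls, and nonemptiness is inherited directly from the well-posedness of the state equations.
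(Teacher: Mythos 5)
Your proposal is correct and follows essentially the same route as the paper: both verify the five hypotheses of the Fleming--Rishel existence theorem (nonemptiness via Lukes, closedness and convexity of $\Delta$, affine growth of the vector field using the a priori boundedness of solutions in $\Omega$, convexity of the quadratic cost in the controls, and coercivity with $p=2$). Your treatment is in fact slightly more careful on conditions (b) and (e), where you supply the explicit constants $c_1=\min_i B_i$ and justify convexity/closedness of $\Delta$ rather than merely its boundedness.
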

\begin{proof}%[Proof of Theorem \ref{existenceofOC}]
	To use an existence result, Theorem III.4.1 from \cite{Fleming}, we must check if the following properties are satisfied:
	\begin{itemize}
		\item[1-] the set of controls and corresponding state variables is non empty;
		\item[2-] the control set $\Delta$ is convex and closed;
		\item[3-] the right hand side of the state system is bounded by a linear function in the
		state and control;
		\item[4-] the integrand of the objective functional is convex;
		\item[5-] there exist constants $c_1>0$ , $c_2>0$ , and $\beta>1$ such that the integrand of the
		objective functional is bounded below by $c_1\left( \sum\limits_{i=1}^{5}\vert u_i\vert^{2}\right)^{\frac{\beta}{2}}-c_2$.
	\end{itemize}
	
	In order to verify these properties, we use a result from Lukes \cite{Lukes} to give the existence of solutions for the state system \eqref{AR3optimal} with bounded coefficients, which gives condition 1. Since by definition, the control set $\Delta$ is bounded , then condition 2 is satisfied. The right hand side of the state system \eqref{AR3optimal} satisfies condition 3 since the state solutions are bounded. The integrand of our objective functional is clearly convex on $\Delta$, which gives condition 4. There are $c_1>0$, $c_2>0$ and $\beta>1$ satisfying
	$D_1I_h+D_2N_v+D_3E+D_4L+\sum\limits^{5}_{i=1}B_iu^{2}_{i}
	\geq c_1\left( \sum\limits_{i=1}^{5}\vert u_i\vert^{2}\right)^{\frac{\beta}{2}}-c_2$,
	because the states variables are bounded. Thus condition 5 is satisfied. We conclude that there exists an optimal control $u^{*}=\left(u^{\star}_1,u^{\star}_2,u^{\star}_3,u^{\star}_4,u^{\star}_5\right)$ that minimizes the objective functional $J\left(u_1,u_2,u_3,u_4,u_5\right)$.\hfill
\end{proof}

\subsection{Characterization of an optimal control}

The necessary conditions that an optimal control must satisfy come from the Pontryagin's Maximum Principle (PMP)~\cite{Pontryagin}. This principle converts \eqref{AR3optimal}-\eqref{OCF} into a problem of minimizing
point wise a Hamiltonian $\mathbb H$, with respect to $\left( u_1,u_2,u_3,u_4,u_5\right)$:
\begin{equation}
\label{Hamiltonian}
\begin{split}
\mathbb H&=D_1I_h+D_2N_v+D_3E+D_4L+\sum\limits^{5}_{i=1}B_iu^{2}_{i}\\
&+\lambda_{S_h}\left\lbrace\Lambda_h-\left[ (1-\alpha_1u_2)\lambda_h+\mu_h +u_1\right] S_h +\omega u_1 R_h  \right\rbrace\\
&+\lambda_{E_h}\left\lbrace \left[ 1-\alpha_1 u_2\right] \lambda_hS_h-(\mu_h+\gamma_h)E_h\right\rbrace\\
&+\lambda_{I_h}\left\lbrace \gamma_hE_h-(\mu_h+(1-\alpha_2u_3)\delta+\sigma+\alpha_2u_3)I_h\right\rbrace\\
&+\lambda_{R_h}\left\lbrace (\sigma+\alpha_2u_3)I_h+u_1S_h-(\mu_h+\omega u_1)R_h\right\rbrace\\ 
&+\lambda_{S_v}\left\lbrace \theta P-\left[ 1-\alpha_1 u_2\right] \lambda_vS_v
-(\mu_v+c_mu_4)S_v\right\rbrace\\ 
&+\lambda_{E_v}\left\lbrace \left( 1-\alpha_1 u_2\right) \lambda_vS_v
-(\mu_v+\gamma_v+c_mu_4)E_v\right\rbrace
+\lambda_{I_v}\left\lbrace \gamma_vE_v-(\mu_v+c_mu_4)I_v\right\rbrace\\
&+\lambda_{E}\left\lbrace \mu_b\left(1-\dfrac{E}{\Gamma_E} \right)(S_v+E_v+I_v)-(s+\mu_E+\eta_1u_5)E\right\rbrace\\
&+\lambda_{L}\left\lbrace sE\left(1-\dfrac{L}{\Gamma_L} \right)-(l+\mu_L+\eta_2u_5)L\right\rbrace\\
&+\lambda_{P}\left\lbrace lL-(\theta+\mu_P)P \right\rbrace\\
\end{split}
\end{equation}
where the $\lambda_i$, $i=S_{h},E_{h},I_{h},R_{h},S_{v},E_{v},I_{v},E,L,P$ are the adjoint variables or co-state variables. 
Applying Pontryagin's Maximum Principle \cite{Pontryagin}, we obtain the following result.

\begin{theorem}
	\label{OCresult}
	Given an optimal control $u^{\star}=\left( u^{\star}_1,u^{\star}_2,u^{\star}_3,u^{\star}_4,u^{\star}_5\right)$ and solutions\\
	$\left(S^{\star}_h,E^{\star}_h,I^{\star}_h,R^{\star}_h,S^{\star}_v,E^{\star}_v,I^{\star}_v
	,E^{\star},A^{\star},P^{\star}\right)$ of the corresponding state system \eqref{AR3optimal}, there exist adjoint variables $\Pi=\left(\lambda_{S_{h}},\lambda_{E_{h}},\lambda_{I_{h}},\lambda_{R_{h}},\lambda_{S_{v}},\lambda_{E_{v}},\lambda_{I_{v}},\lambda_{E},\lambda_{L},\lambda_{P}\right)$ satisfying,
	{\footnotesize
		\begin{equation}
		\label{adjoints1}
		\begin{split}
		\dfrac{d\lambda_{S_h}}{dt}&=\mu_h\lambda_{S_h}+u_1(\lambda_{S_h}-\lambda_{R_h})
		+(1-\alpha_1u_2)\lambda_h\left(1-\dfrac{S_h}{N_h}\right)(\lambda_{S_h}-\lambda_{E_h})
		+(1-\alpha_1u_2)\dfrac{S_v\lambda_v}{N_h}(\lambda_{E_v}-\lambda_{S_v})\\
		\end{split}
		\end{equation}
		\begin{equation}
		\label{adjoints2}
		\begin{split}
		\dfrac{d\lambda_{E_h}}{dt}&=\mu_h\lambda_{E_h}+\gamma_h(\lambda_{E_h}-\lambda_{I_h})
		+(1-\alpha_1u_2)\dfrac{S_h\lambda_h}{N_h}\left( \lambda_{E_h}-\lambda_{S_h}\right) 
		+(1-\alpha_1u_2)\dfrac{S_v}{N_h}\left( a\beta_{vh}\eta_h-\lambda_v\right) 
		\left( \lambda_{S_v}-\lambda_{E_v}\right) 
		\end{split}
		\end{equation}
		\begin{equation}
		\label{adjoints3}
		\begin{split}
		\dfrac{d\lambda_{I_h}}{dt}&=-D_1+\left[\mu_h+(1-\alpha_2u_3)\delta\right]\lambda_{I_h}
		+(\sigma+\alpha_2u_3)(\lambda_{I_h}-\lambda_{R_h})
		+(1-\alpha_1u_2)\dfrac{S_h\lambda_h}{N_h}(\lambda_{E_h}-\lambda_{S_h})\\
		&+(1-\alpha_1u_2)\dfrac{S_v}{N_h}\left( a\beta_{vh}-\lambda_v\right)
		\left( \lambda_{S_v}-\lambda_{E_v}\right)
		\end{split}
		\end{equation}
		\begin{equation}
		\label{adjoints4}
		\begin{split}
		\dfrac{d\lambda_{R_h}}{dt}&=\mu_h\lambda_{R_h}+\omega u_1(\lambda_{R_h}-\lambda_{S_h})
		+(1-\alpha_1u_2)\dfrac{S_h\lambda_h}{N_h}(\lambda_{E_h}-\lambda_{S_h})
		+(1-\alpha_1u_2)\dfrac{S_v\lambda_v}{N_h}(\lambda_{E_v}-\lambda_{S_v})
		\end{split}
		\end{equation}
		\begin{equation}
		\label{adjoints5}
		\begin{split}
		\dfrac{d\lambda_{S_v}}{dt}&=-D_2+(\mu_v+c_mu_4)\lambda_{S_v}+(1-\alpha_1u_2)\lambda_v
		(\lambda_{S_v}-\lambda_{E_v})-\mu_b\left( 1-\dfrac{E}{\Gamma_E}\right)\lambda_{E}
		\end{split}
		\end{equation}
		\begin{equation}
		\label{adjoints6}
		\begin{split}
		\dfrac{d\lambda_{E_v}}{dt}&=-D_2+(\mu_v+c_mu_4)\lambda_{E_v}+\gamma_v(\lambda_{E_v}-\lambda_{I_v})
		+a\eta_v\beta_{hv}(1-\alpha_1u_2)(\lambda_{S_h}-\lambda_{E_h})\dfrac{S_h}{N_h}
		-\mu_b\left( 1-\dfrac{E}{\Gamma_E}\right)\lambda_{E}
		\end{split}
		\end{equation}
		\begin{equation}
		\label{adjoints7}
		\begin{split}
		\dfrac{d\lambda_{I_v}}{dt}&=-D_2+(\mu_v+c_mu_4)\lambda_{I_v}
		+a\beta_{hv}(1-\alpha_1u_2)\dfrac{S_h}{N_h}(\lambda_{S_h}-\lambda_{E_h})
		-\mu_b\left( 1-\dfrac{E}{\Gamma_E}\right)\lambda_{E}
		\end{split}
		\end{equation}
		\begin{equation}
		\label{adjoints8}
		\begin{split}
		\dfrac{d\lambda_{E}}{dt}&=-D_3+\left[\dfrac{\mu_b}{\Gamma_E}N_v+s+\mu_E+\eta_1u_5\right]\lambda_E
		-s\left( 1-\dfrac{L}{\Gamma_L}\right)\lambda_{L}
		\end{split}
		\end{equation}
		\begin{equation}
		\label{adjoints9}
		\begin{split}
		\dfrac{d\lambda_{L}}{dt}&=-D_4-l\lambda_P+\left[\dfrac{s}{\Gamma_L}E+\mu_L+l+\eta_2u_5 \right]\lambda_L 
		\end{split}
		\end{equation}
		\begin{equation}
		\label{adjoints10}
		\begin{split}
		\dfrac{d\lambda_{P}}{dt}&=(\mu_P+\theta)\lambda_P-\theta\lambda_{S_v}
		\end{split}
		\end{equation}
	}
	and the transversality conditions 
	\begin{equation}
	\label{Trans--cond}
	\lambda^{*}_{i}(t_f)=0,\qquad i=1,\hdots 10.
	\end{equation}
	
	Furthermore,
	\begin{equation}
	\label{maxControl}
	\left. \begin{array}{ll}
	u^{\star}_1=\min\left\lbrace 1,\max\left(0,\dfrac{(S_h-\omega R_h)(\lambda_{S_h}-\lambda_{R_h})}{2B_1}\right)\right\rbrace,\\
	u^{\star}_2=\min\left\lbrace 1,\max\left(0,\dfrac{\alpha_1\left[\lambda_hS_h(\lambda_{E_h}-\lambda_{S_h})
		+\lambda_vS_v(\lambda_{E_v}-\lambda_{S_v})\right] }{2B_2}\right)\right\rbrace,\\
	u^{\star}_3=\min\left\lbrace 1,\max\left(0,\dfrac{\alpha_2\left[(1-\delta)\lambda_{I_h}-\lambda_{R_h}\right]I_h}{2B_3}\right)\right\rbrace,\\
	u^{\star}_4=\min\left\lbrace 1,\max\left(0,\dfrac{c_m\left[S_v\lambda_{S_v}+E_v\lambda_{E_v}+I_v\lambda_{I_v} \right] }{2B_4}\right)\right\rbrace, \\
	u^{\star}_5=\min\left\lbrace 1,\max\left(0,\dfrac{\eta_1E\lambda_{E}+\eta_2L\lambda_{L}}{2B_5}\right)\right\rbrace. 
	\end{array}\right.
	\end{equation}
\end{theorem}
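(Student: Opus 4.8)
The plan is a textbook application of Pontryagin's Maximum Principle to the problem \eqref{AR3optimal}--\eqref{OCF}. By Theorem~\ref{existenceofOC} an optimal control $u^{\star}$ together with the associated state trajectory exists; since the terminal state is free and the objective functional carries no terminal payoff, the problem is in normal form, and PMP furnishes absolutely continuous adjoint functions $\Pi=(\lambda_{S_h},\dots,\lambda_P)$ such that, along the optimal pair, $\dot\lambda_x = -\partial\mathbb H/\partial x$ for each state variable $x\in\{S_h,E_h,I_h,R_h,S_v,E_v,I_v,E,L,P\}$, with transversality conditions $\lambda_x(t_f)=0$, and such that $\mathbb H$ (the Hamiltonian already written in \eqref{Hamiltonian}) is minimised over $\Delta$ pointwise in $t$ at $u^{\star}$. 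So the proof has exactly three tasks: derive the adjoint system, read off the transversality conditions, and characterise the minimiser of $\mathbb H$ in the controls.

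First I would compute $\dot\lambda_x=-\partial\mathbb H/\partial x$ for the ten state variables. This is mechanical, the only delicate point being the differentiation of the incidence terms: since $\lambda_h=a\beta_{hv}(\eta_vE_v+I_v)/N_h$ and $\lambda_v=a\beta_{vh}(\eta_hE_h+I_h)/N_h$ with $N_h=S_h+E_h+I_h+R_h$, one has $\partial\lambda_h/\partial x=-\lambda_h/N_h$ for $x\in\{S_h,E_h,I_h,R_h\}$, together with $\partial\lambda_h/\partial E_v=a\beta_{hv}\eta_v/N_h$, $\partial\lambda_h/\partial I_v=a\beta_{hv}/N_h$ (and symmetrically for $\lambda_v$). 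Differentiating a product such as $(1-\alpha_1u_2)\lambda_hS_h$ then produces the $(1-S_h/N_h)$ and $S_h/N_h$ factors and the $\lambda_h/N_h$ pieces visible in \eqref{adjoints1}--\eqref{adjoints4}, while differentiation in $E_v,I_v$ gives the $a\beta_{hv}\eta_v$ and $a\beta_{hv}$ terms of \eqref{adjoints6}--\eqref{adjoints7} (symmetrically, $\lambda_v$ produces the corresponding terms of \eqref{adjoints2}--\eqref{adjoints3}). Collecting every line of $\mathbb H$ that contains $x$ yields \eqref{adjoints1}--\eqref{adjoints10}; the inhomogeneous terms $-D_1,-D_2,-D_3,-D_4$ arise because $I_h,N_v,E,L$ enter the integrand of \eqref{OCF} linearly. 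The boundary conditions \eqref{Trans--cond} are then immediate from the free-endpoint, no-terminal-cost structure.

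Next I would characterise the controls. For each $i$, $\mathbb H$ is quadratic in $u_i$ with leading coefficient $B_i>0$, hence strictly convex, so its minimiser over the admissible interval $[0,1]$ is the projection onto $[0,1]$ of the unconstrained stationary point, i.e. $u_i^{\star}=\min\{1,\max(0,\widehat u_i)\}$ where $\widehat u_i$ solves $\partial\mathbb H/\partial u_i=0$. For instance, gathering the coefficient of $u_1$ across the $\lambda_{S_h}$- and $\lambda_{R_h}$-lines gives $\partial\mathbb H/\partial u_1 = 2B_1u_1-(S_h-\omega R_h)(\lambda_{S_h}-\lambda_{R_h})$, which yields the first line of \eqref{maxControl}; the remaining four come the same way (for $u_2$ from the $\lambda_{S_h},\lambda_{E_h},\lambda_{S_v},\lambda_{E_v}$-lines, for $u_3$ from the $\lambda_{I_h},\lambda_{R_h}$-lines, for $u_4$ from the $\lambda_{S_v},\lambda_{E_v},\lambda_{I_v}$-lines, for $u_5$ from the $\lambda_E,\lambda_L$-lines).

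The main obstacle is bookkeeping rather than anything conceptual: one must track correctly how each coupled nonlinear incidence term contributes to the partial derivatives, in particular the $1/N_h$ dependence, which smears a single force-of-infection term across all four human adjoint equations, so sign and coefficient errors are easy to make. A secondary point I would state explicitly is that \eqref{maxControl} is only an implicit characterisation — the right-hand sides involve the states and adjoints, which in turn depend on $u^{\star}$ — so the content of the theorem is the two-point boundary-value optimality system formed by \eqref{AR3optimal} with its initial data, \eqref{adjoints1}--\eqref{adjoints10} with the terminal data \eqref{Trans--cond}, and the control law \eqref{maxControl}; this is exactly what will be solved numerically later, and uniqueness of $u^{\star}$ on a short enough horizon would follow from a routine Lipschitz/Gronwall estimate (not needed for the statement).
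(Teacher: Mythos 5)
Your proposal is correct and follows exactly the route taken in the paper: the adjoint system is obtained from $\dot\lambda_x=-\partial\mathbb H/\partial x$ with zero terminal (transversality) conditions, and the control characterization comes from solving $\partial\mathbb H/\partial u_i=0$ on the interior of the control set and projecting onto the bounds, justified by the strict convexity of $\mathbb H$ in each $u_i$. Your version simply spells out the bookkeeping (the $1/N_h$ differentiation and the sample computation for $u_1$) that the paper leaves implicit.
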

\begin{proof}
	The differential equations governing the adjoint variables are obtained by differentiation of the Hamiltonian function, evaluated at the optimal control. Then the adjoint system can be written as
	\[
	\begin{split}
	\dfrac{d\lambda_{S_h}}{dt}&=-\dfrac{\partial \mathbb H}{\partial S_h},\,\,
	\dfrac{d\lambda_{E_h}}{dt}=-\dfrac{\partial \mathbb H}{\partial E_h},\,\,
	\dfrac{d\lambda_{I_h}}{dt}=-\dfrac{\partial \mathbb H}{\partial I_h},\,\,
	\dfrac{d\lambda_{R_h}}{dt}=-\dfrac{\partial \mathbb H}{\partial R_h},\\
	\dfrac{d\lambda_{S_v}}{dt}&=-\dfrac{\partial \mathbb H}{\partial S_v},\,\,
	\dfrac{d\lambda_{E_v}}{dt}=-\dfrac{\partial \mathbb H}{\partial E_v},\,\,
	\dfrac{d\lambda_{I_v}}{dt}=-\dfrac{\partial \mathbb H}{\partial I_v},\\
	\dfrac{d\lambda_E}{dt}&=-\dfrac{\partial \mathbb H}{\partial E},\,\,
	\dfrac{d\lambda_L}{dt}=-\dfrac{\partial \mathbb H}{\partial L},\,\,
	\dfrac{d\lambda_{P}}{dt}=-\dfrac{\partial \mathbb H}{\partial P},
	\end{split}
	\]
	with zero final time conditions (transversality).
	
	To get the characterization of the optimal control given by \eqref{maxControl}, we follow \cite{HelenaSofiaRodrigues2014,Lenhart} and solve the equations on the interior of the control set,
	\[
	\begin{split}
	\dfrac{\partial \mathbb H}{\partial u_i}=0,\,\,i=1,\hdots,5.
	\end{split}
	\]
	Using the bounds on the controls, we obtain the desired characterization. This ends the proof.\hfill
\end{proof}

\section{Numerical simulations and discussion}
\label{NUMopc}
%\subsection{Numerical results}
The simulations were carried out using the values of Table \ref{vaueR0ar3opc}. We use an iterative scheme to solve the optimality system. We first solve the state equations~\eqref{AR3optimal} with a guess for the controls over the simulated time using fourth order Runge--Kutta scheme. Then, we use
the current iterations solutions of the state equation to solve the adjoint equations~\eqref{adjoints1}--~\eqref{adjoints10} by a backward fourth order Runge--Kutta scheme. Finally, we update the controls by using a convex combination of the previous controls and the value from the characterizations~\eqref{maxControl} (see e.g. \cite{moaaHee2012,bbMBE2011,Zaman2008,Lenhart,Okosun2011}).
The values chosen for the weights in the objective functional $J$ (see Eq.~\eqref{OCF}) are given in Table \ref{OPCcost}. %We use parameter values of table \ref{vaueR0ar3}. 
Table \ref{OPCinittab1} gives the initials conditions of state variables. We simulated the system~\eqref{AR3optimal} in a period of twenty days ($t_f=20$). 
\begin{table}[h!]
	\begin{center}
		\caption{Value of parameters using in numerical simulations.\label{vaueR0ar3opc}}
		\begin{tabular}{cccccccc}
			\hline
			\hline
			Parameter&value&Parameter&value&Parameter&value&Parameter&value\\
			\hline
			$\mu_v$&$\frac{1}{30}$&$l$&0.5&$\alpha_2$&0.5&$\gamma_h$&$\frac{1}{14}$\\
			$a$&1&$\mu_E$&0.2&$\mu_h$&$\frac{1}{67*365}$&$\gamma_v$&$\frac{1}{21}$\\
			$\Lambda_h$&2.5&$\mu_b$&6&$\theta$&0.08&$\mu_P$&0.4\\
			$\beta_{hv}$&0.75&&&$\sigma$&0.1428&$\eta_v$&0.35\\
			$\beta_{vh}$&0.75&$\omega$&0.05&$\mu_L$&0.4&$\delta$&$10^{-3}$\\
			$\Gamma_{E}$&10000&$s$&0.7&$\eta_1$&0.001&$\eta_2$&0.3\\
			$\Gamma_{L}$&5000&$\eta_h$&0.35&$c_m$&0.2&$\alpha_1$&0.5\\
			\hline
		\end{tabular}
	\end{center}
\end{table}

\begin{table}[h!]
	\begin{center}
		\caption{Numerical values for the cost functional parameters.} \label{OPCcost}
		\begin{tabular}{lllrrr}
			\hline
			\hline
			Parameters&Value&Source&Parameters&Value&Source\\
			\hline
			$D_1$:&10,000&\cite{moaaHee2012}&$B_1$&10 &Assumed\\
			$D_{2}$:&10,000& \cite{moaaHee2012}&$B_2$:&10 &\cite{moaaHee2012}\\
			$D_{3}$:&5000&Assumed&$B_3$:&10&\cite{moaaHee2012}\\
			$D_4$:&1&\cite{moaaHee2012}&$B_4$:&10 &Assumed\\
			&&&$B_5$&10&\cite{moaaHee2012}\\
			\hline
		\end{tabular}
	\end{center}
\end{table}
\begin{table}[h!]
	\begin{center}
		\caption{Initial conditions.} \label{OPCinittab1}
		\begin{tabular}{llllll}
			\hline
			\hline
			Human states& Initial value& Adult Vector & Initial value & Aquatic states & Initial value\\
			& & states& & & \\
			\hline
			$S_{h_0}$:& 700& $S_{v_0}$&3000& $E_{0}$&10000 \\
			$E_{h_0}$:& 220& $E_{v_0}$& 400 &$L_{0}$&5000 \\
			$I_{h_0}$:& 100& $I_{v_0}$& 120&$P_{0}$&3000\\
			$R_{h_0}$:& 60 & \\
			\hline
		\end{tabular}
	\end{center}
\end{table}

%Since the principal aim of the optimal control in our case is to decrease the number of infected in human communities, we will then determine the best control strategy allowing considerably decrease the number of humans infected, while minimizing the costs of control. 
As the purpose of our study is to seek the best combination linking vaccination to other control mechanism, we will just determine the best strategy among the strategies listed in Table \ref{ControlStrategies}. %We investigate and compare numerical results, with the different strategies listed in table~\ref{ControlStrategies}. %In table~\ref{ControlStrategies}, if i and j are two different control strategies, then i + j represents the combination of both control strategies. 
Therefore, we will distinguished five control strategies at follows:
	%\noindent(i) only one control,
	%\noindent(ii) combination of two controls only,
	%\noindent(iii) combination of three controls only,
	%\noindent(iv) combination of four controls only, and
	%\noindent(v) the combination of all the five controls.

\begin{table}[h!]
	\begin{center}
		\caption{Description of the different Control strategies.} \label{ControlStrategies}
		\begin{tabular}{|l|l|}
			\hline
			\hline
			Strategy& Description\\
			\hline
			$Z_{1}$&Vaccine combined with individual protection, treatment and adulticide\\
			\hline
			$Z_{2}$&Vaccine combined with individual protection, treatment and larvicide\\
			\hline
			$Z_{3}$&Vaccine combined with treatment, adulticide and larvicide\\
			\hline
			$Z_{4}$&Vaccine combined with individual protection, adulticide and larvicide\\
			\hline
			Z&Combination of the five controls\\
			\hline
		\end{tabular}
	\end{center}
\end{table}

\begin{figure}[h!]
	\begin{center}
		\includegraphics[scale=0.20]{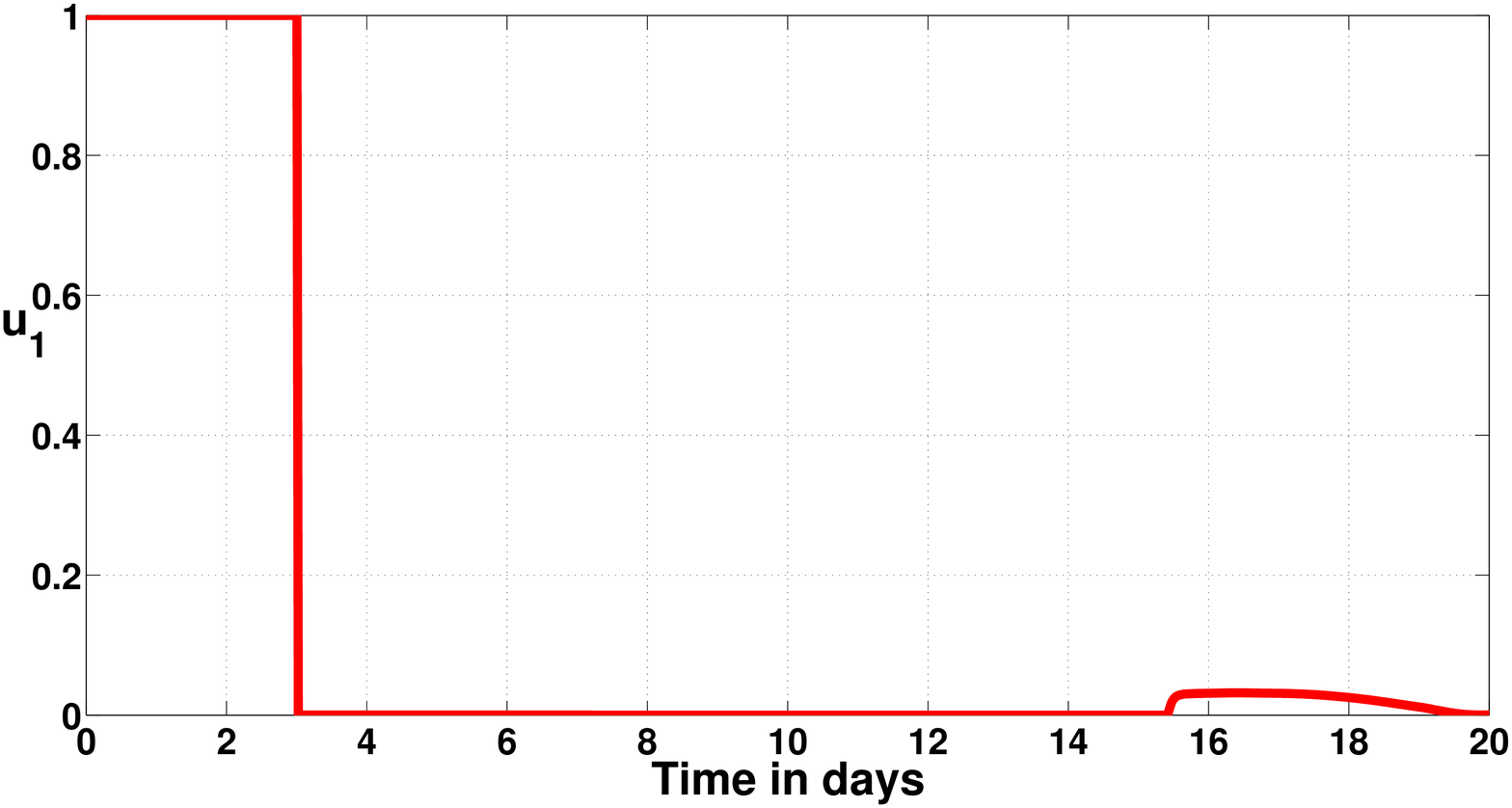}
		\includegraphics[scale=0.20]{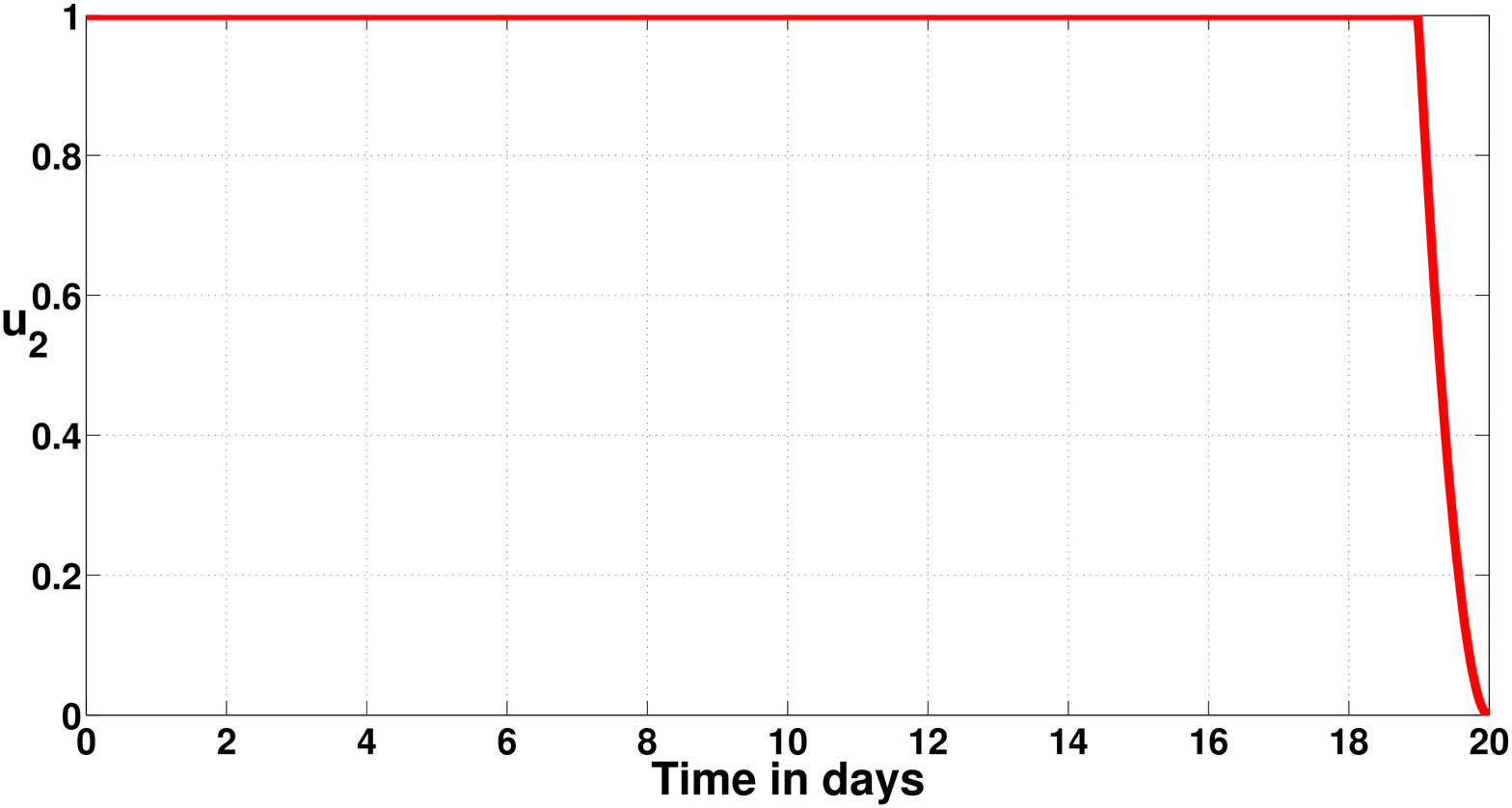}
		\includegraphics[scale=0.20]{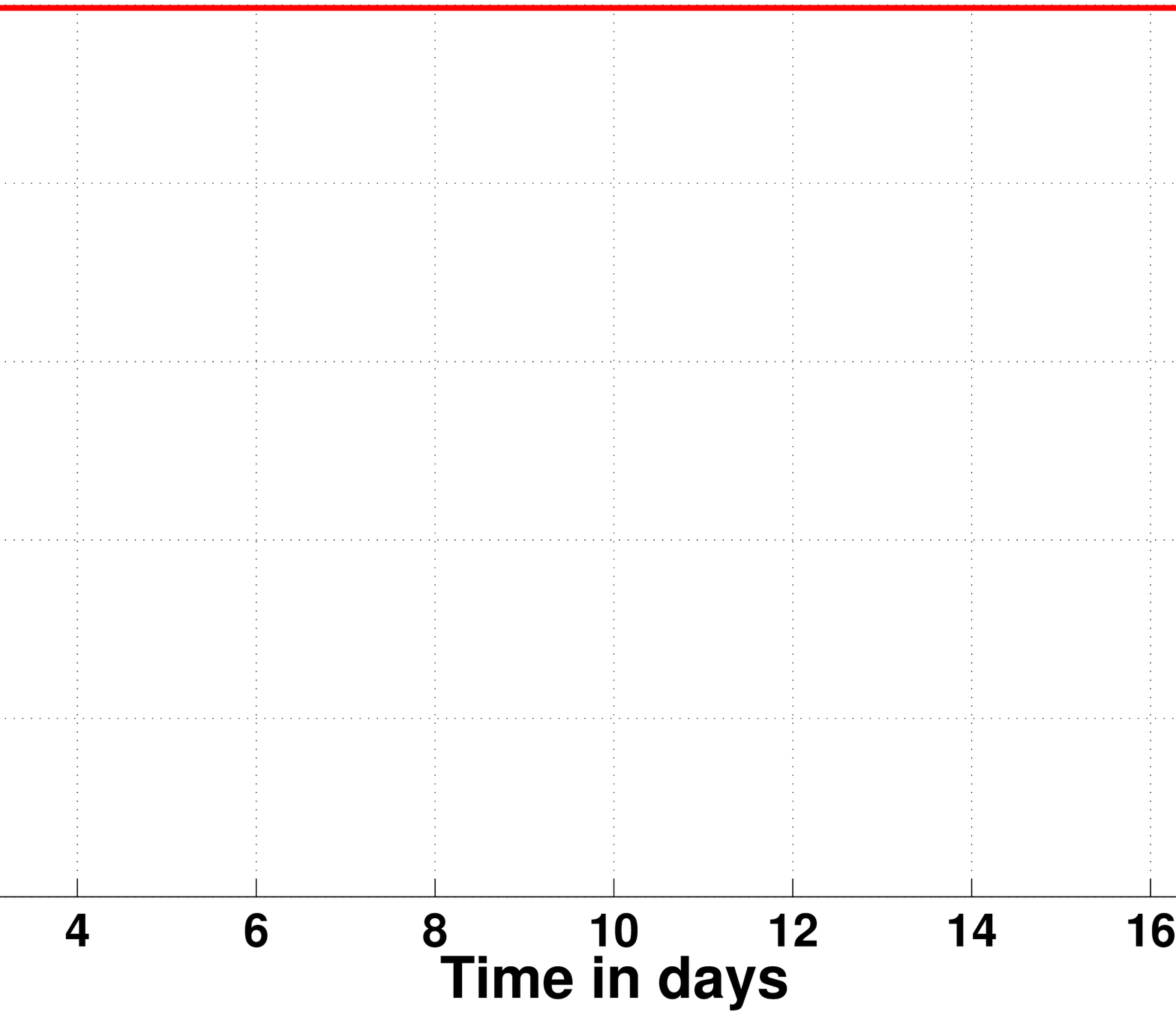}
		\includegraphics[scale=0.20]{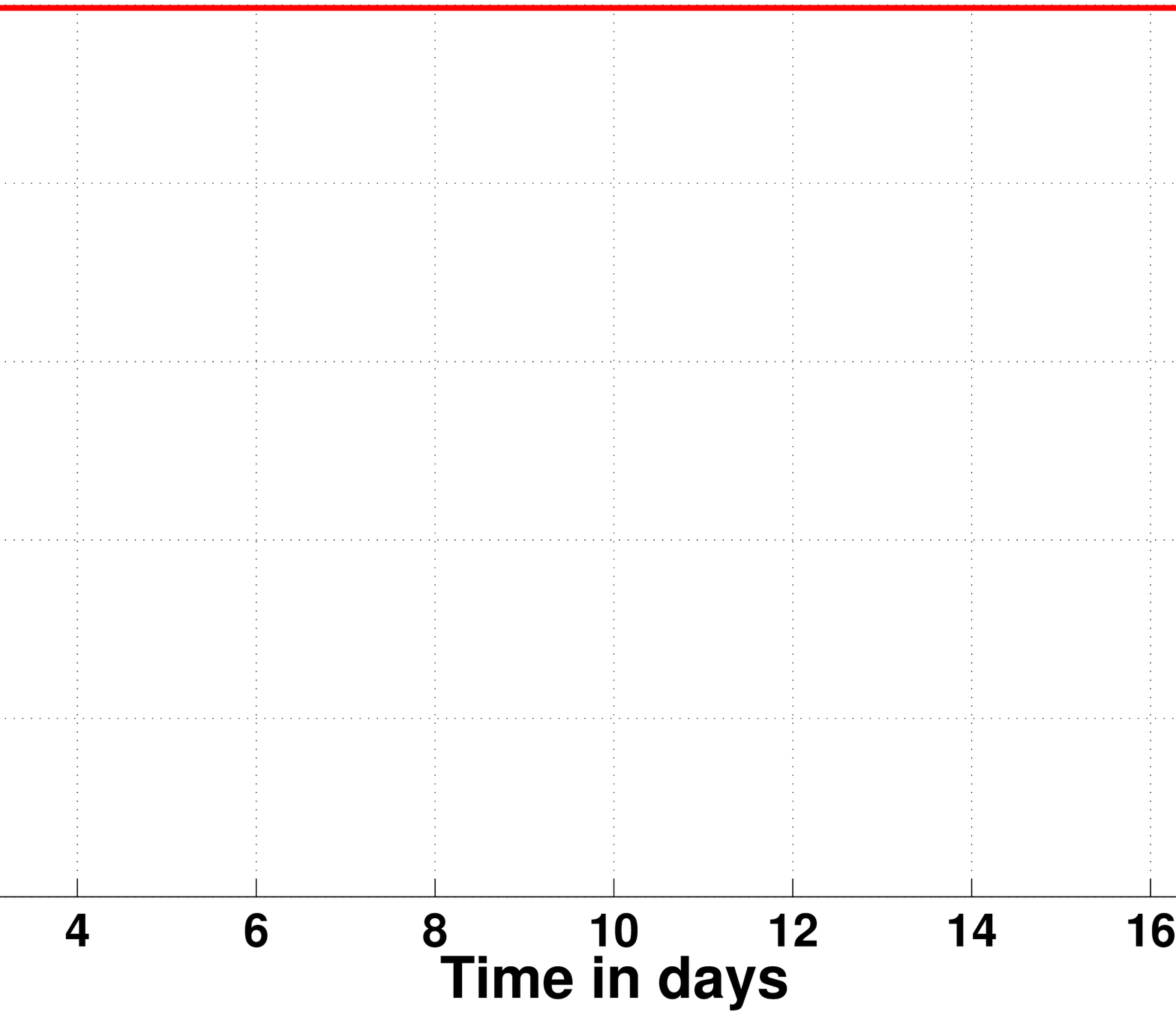}
		\includegraphics[scale=0.20]{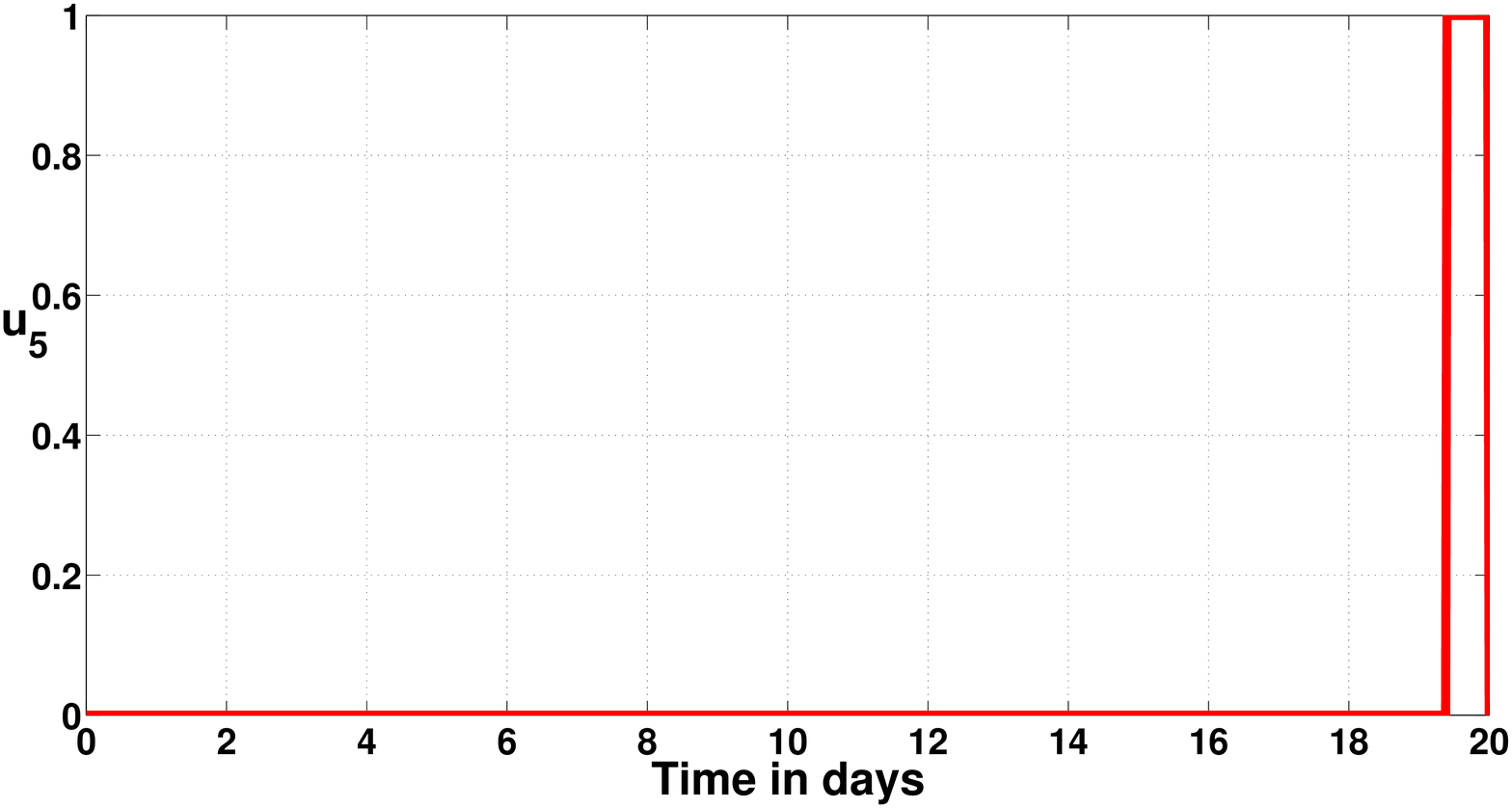}
		\caption{Control functions.\label{controls}}
	\end{center}
\end{figure}

%\subsection{Category (iv): four controls only}
%In this category, the combination of only four controls is applied. Thus, we distinguish the following five possibilities.
\paragraph{(i) Vaccination combined with individual protection, treatment and adulticide}
$\;$

With this strategy, only the combination of the control $u_1$ on vaccination, the control $u_2$ on individual protection, the control $u_3$ on treatment and the control $u_4$ on adulticide, is used to minimise the objective function $J$~\eqref{OCF}, while the other control $u_5$ is set to zero. On figure~\ref{onlyu1+u2+u3+u4}, we observed that the control strategy resulted in a decrease in the number of infected humans ($I_h$) while an increase is observed in the number of infected humans ($I_h$) in strategy without control. The use of this combination have also a great impact on the decreasing total vector population ($N_v$), as well as aquatic vector populations ($E$ and $L$). %{\bf This can be justified by the fact that the control $u_1$, $u_2$ have no effect on the decreasing the total number of adult vector or the vectors in immature stage (E and L), as well as the control $u_5$.}
\begin{figure}[h!]
	\begin{center}
		\includegraphics[scale=0.20]{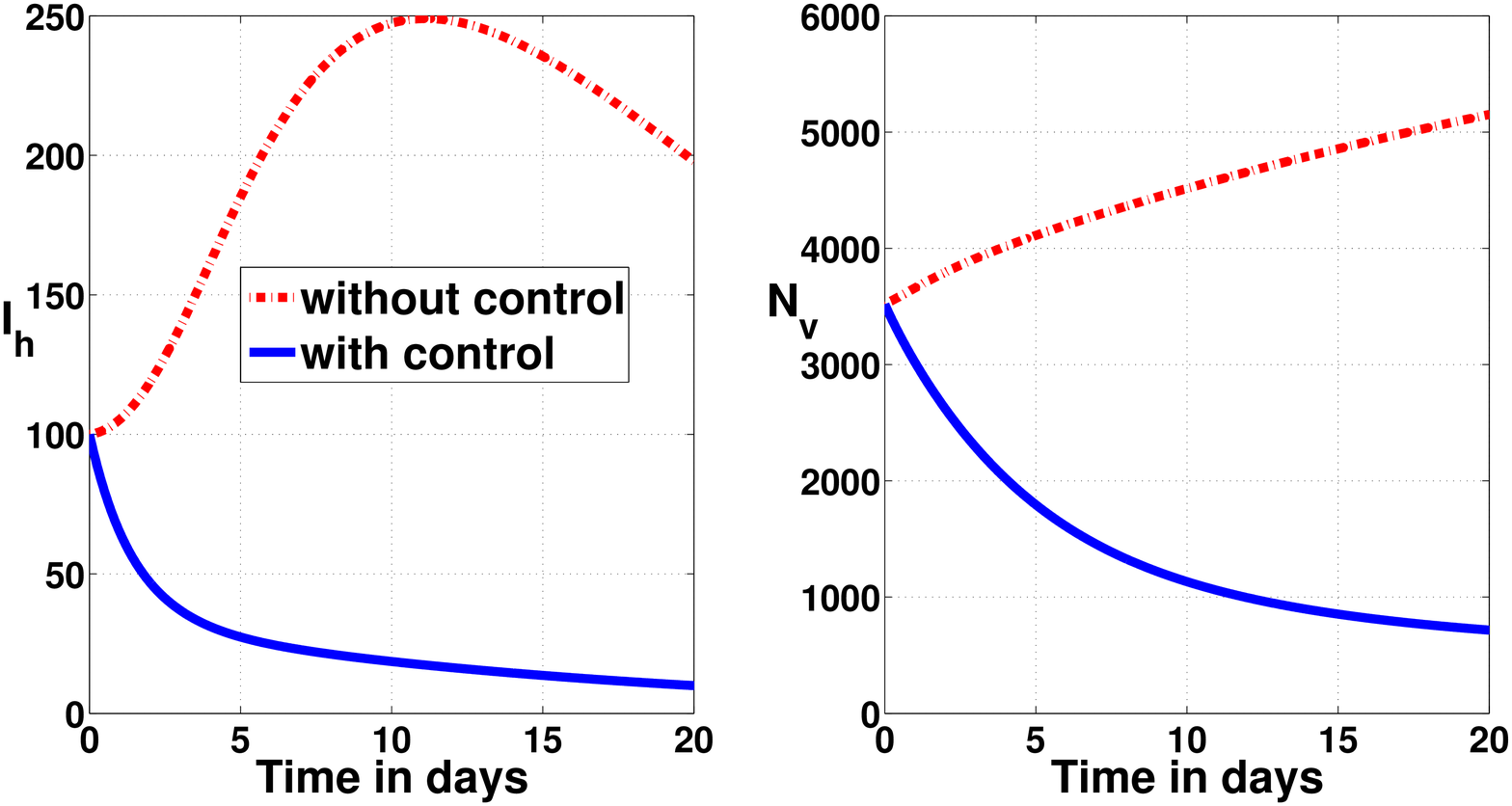}
		\includegraphics[scale=0.20]{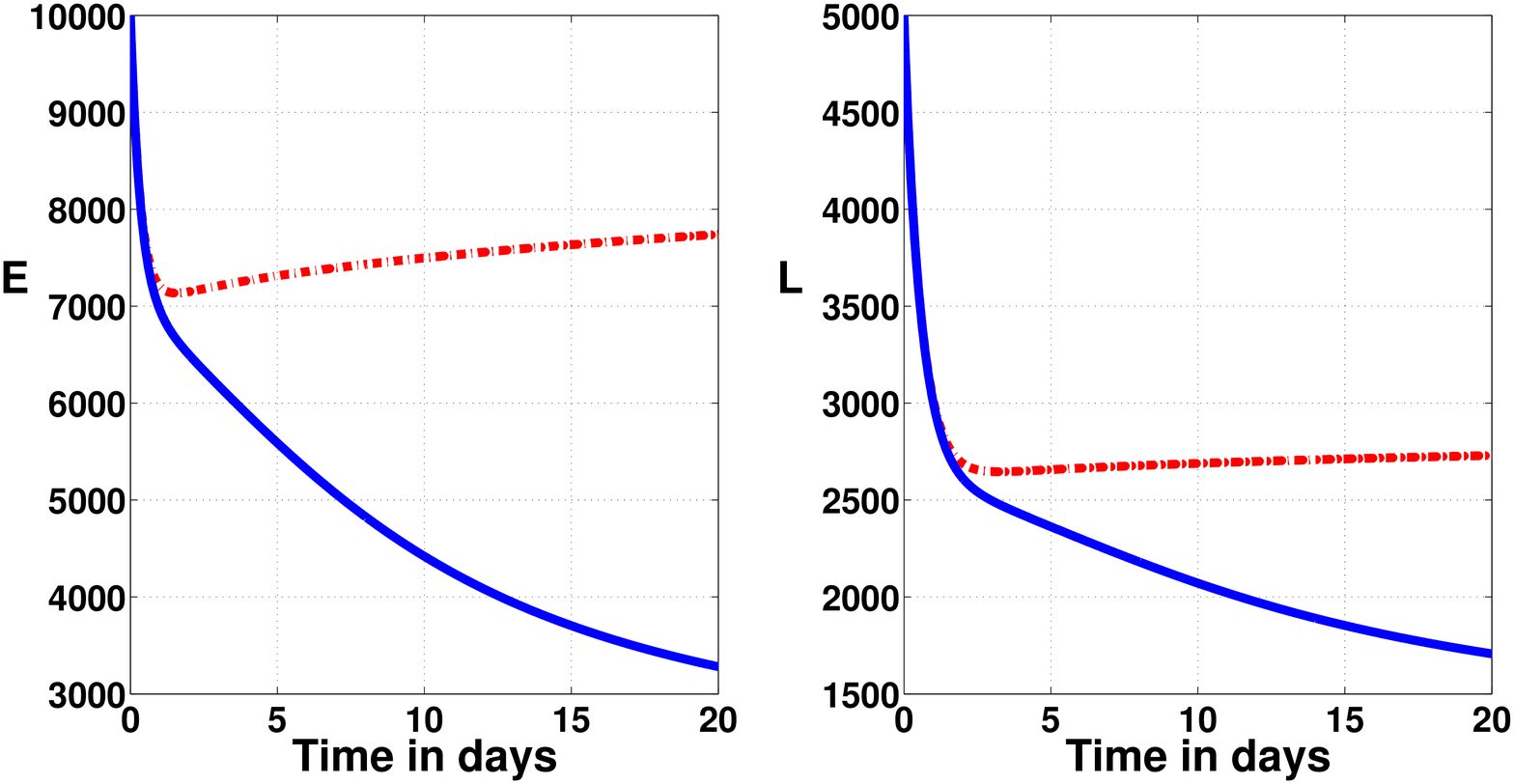}
		\caption{Simulation results of optimal control model \eqref{AR3optimal} showing the effect of using optimal vaccination combined with individual protection, treatment and adulticide ($u_1\neq 0$, $u_2\neq 0$, $u_3\neq 0$, $u_4\neq 0$ ).\label{onlyu1+u2+u3+u4}}
	\end{center}
\end{figure}

\paragraph{(ii) Vaccination combined with individual protection, treatment and larvicide}
$\;$

With this strategy, only the combination of the control $u_1$ on vaccination, the control $u_2$ on individual protection, the control $u_3$ on treatment and the control $u_5$ on larvicide, is used to minimise the objective function $J$~\eqref{OCF}, while the other control $u_4$ is set to zero. On figure~\ref{onlyu1+u2+u3+u5}, we observed that the control strategy resulted in a decrease in the number of infected humans ($I_h$) while an increase is observed in the number of infected humans ($I_h$) in strategy without control. The use of this combination have no impact on the decreasing total vector population ($N_v$), as well as aquatic vector populations ($E$ and $L$). %{\bf This can be justified by the fact that the control $u_1$, $u_2$ have no effect on the decreasing the total number of adult vector or the vectors in immature stage (E and L), as well as the control $u_5$.}
\begin{figure}[h!]
	\begin{center}
		\includegraphics[scale=0.20]{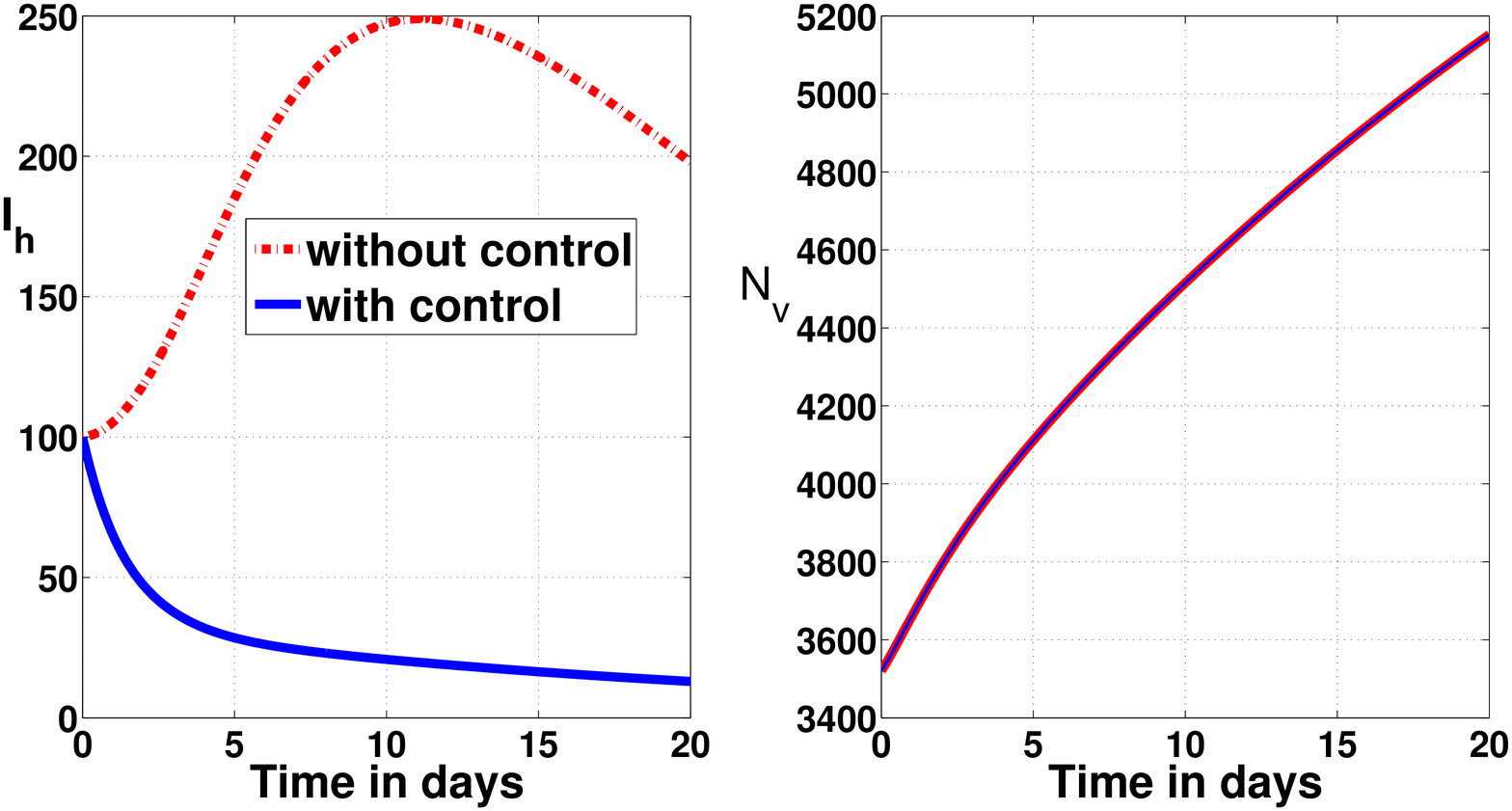}
		\includegraphics[scale=0.20]{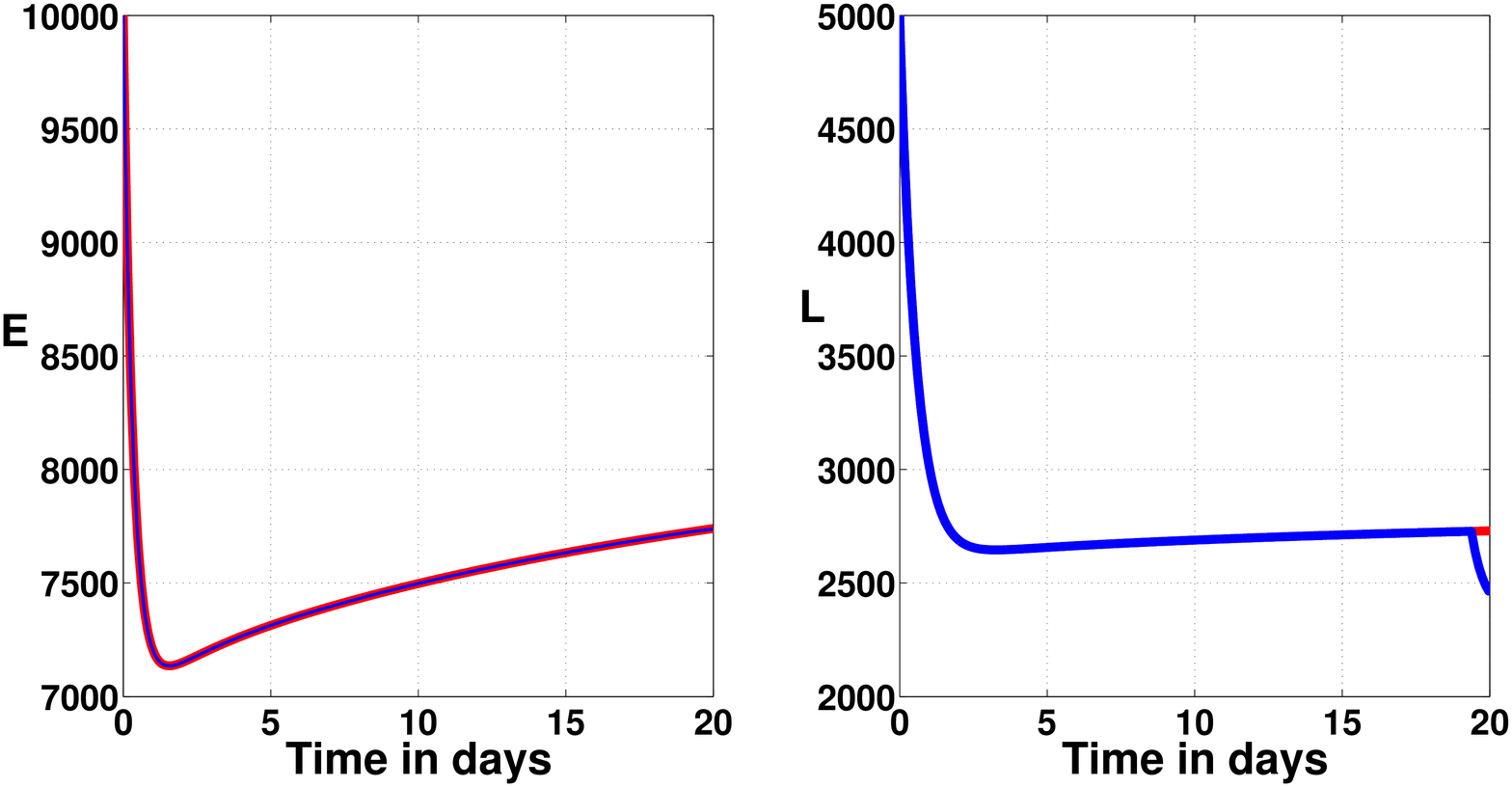}
		\caption{Simulation results of optimal control model \eqref{AR3optimal} showing the effect of using optimal vaccination combined with individual protection, treatment and larvicide ($u_1\neq 0$, $u_2\neq 0$, $u_3\neq 0$, $u_5\neq 0$ ).\label{onlyu1+u2+u3+u5}}
	\end{center}
\end{figure}

\paragraph{(iii) Vaccination combined with treatment, adulticide and larvicide}
$\;$

With this strategy, only the combination of the control $u_1$ on vaccination, the control $u_3$ on treatment, the control $u_4$ on adulticide and the control $u_5$ on larvicide, is used to minimise the objective function $J$~\eqref{OCF}, while the other control $u_2$ is set to zero. On figure~\ref{onlyu1+u3+u4+u5}, we observed that the control strategy resulted in a decrease in the number of infected humans ($I_h$) while an increase is observed in the number of infected humans ($I_h$) in strategy without control. The use of this combination have a considerable impact on the decreasing total vector population ($N_v$), as well as aquatic vector populations ($E$ and $L$). %{\bf This can be justified by the fact that the control $u_1$, $u_2$ have no effect on the decreasing the total number of adult vector or the vectors in immature stage (E and L), as well as the control $u_5$.}
\begin{figure}[h!]
	\begin{center}
		\includegraphics[scale=0.20]{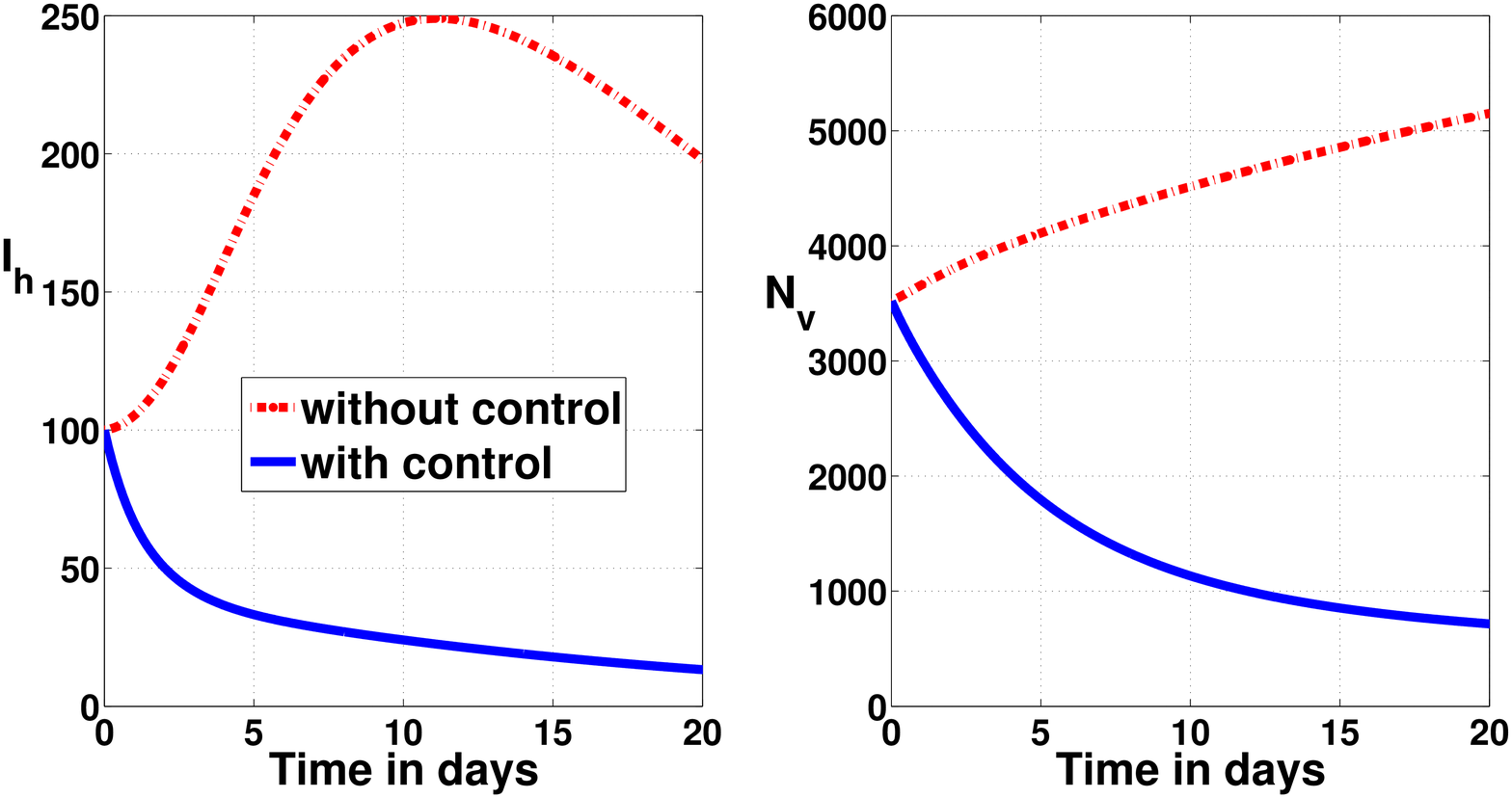}
		\includegraphics[scale=0.20]{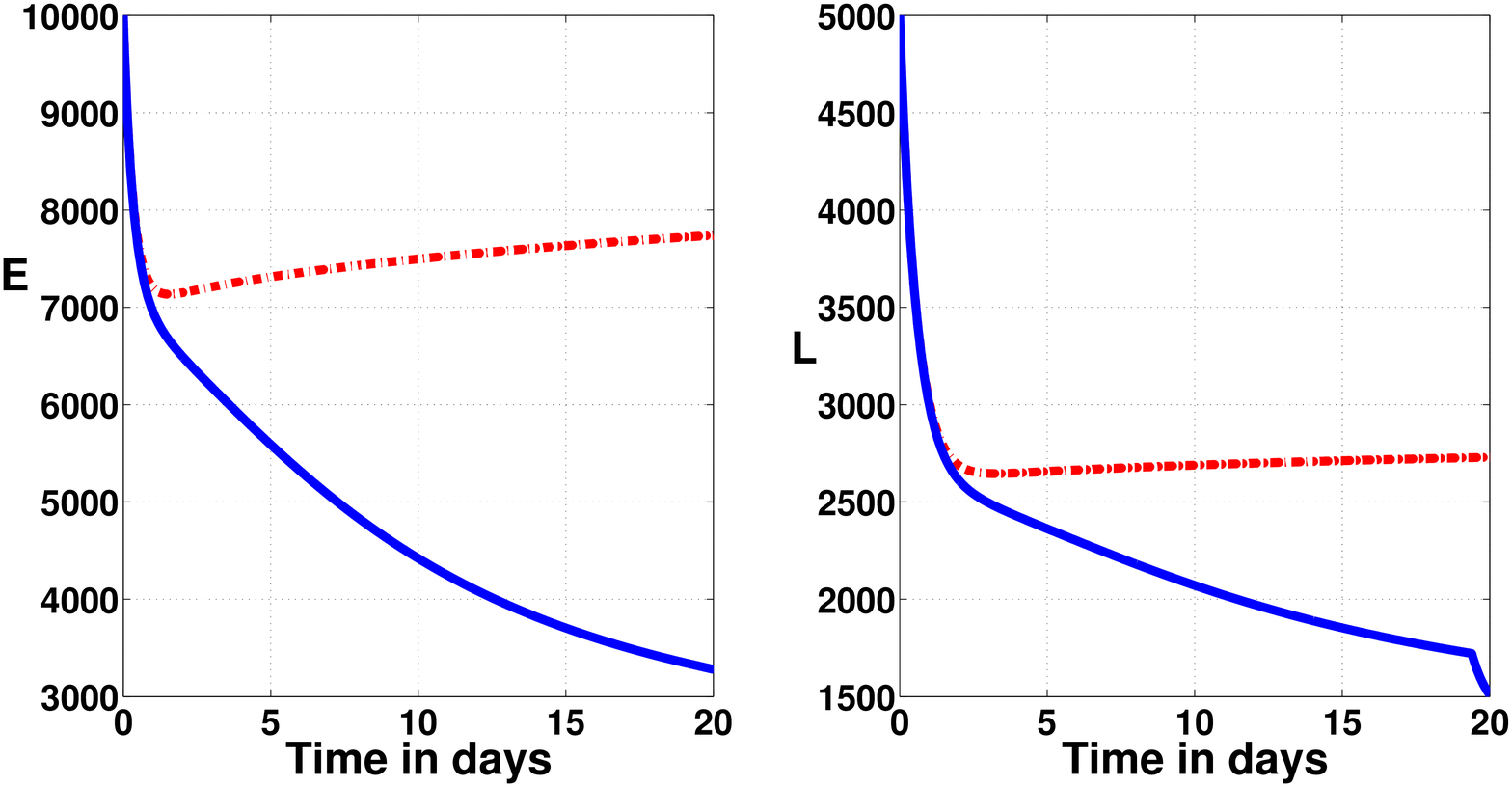}
		\caption{Simulation results of optimal control model \eqref{AR3optimal} showing the effect of using optimal vaccination combined with treatment, adulticide and larvicide ($u_1\neq 0$, $u_3\neq 0$, $u_4\neq 0$, $u_5\neq 0$ ).\label{onlyu1+u3+u4+u5}}
	\end{center}
\end{figure}

\paragraph{(iv) Vaccination combined with individual protection, adulticide and larvicide}
$\;$

With this strategy, only the combination of the control $u_1$ on vaccination, the control $u_2$ on individual protection, the control $u_4$ on adulticide and the control $u_5$ on larvicide, is used to minimise the objective function $J$~\eqref{OCF}, while the other control $u_4$ are set to zero. On figure~\ref{onlyu1+u2+u4+u5}, we observed that the control strategy resulted in a decrease in the number of infected humans ($I_h$) while an increase is observed in the number of infected humans ($I_h$) in strategy without control. The use of this combination have a great impact on the decreasing total vector population ($N_v$), as well as aquatic vector populations ($E$ and $L$). %{\bf This can be justified by the fact that the control $u_1$, $u_2$ have no effect on the decreasing the total number of adult vector or the vectors in immature stage (E and L), as well as the control $u_5$.}
\begin{figure}[h!]
	\begin{center}
		\includegraphics[scale=0.20]{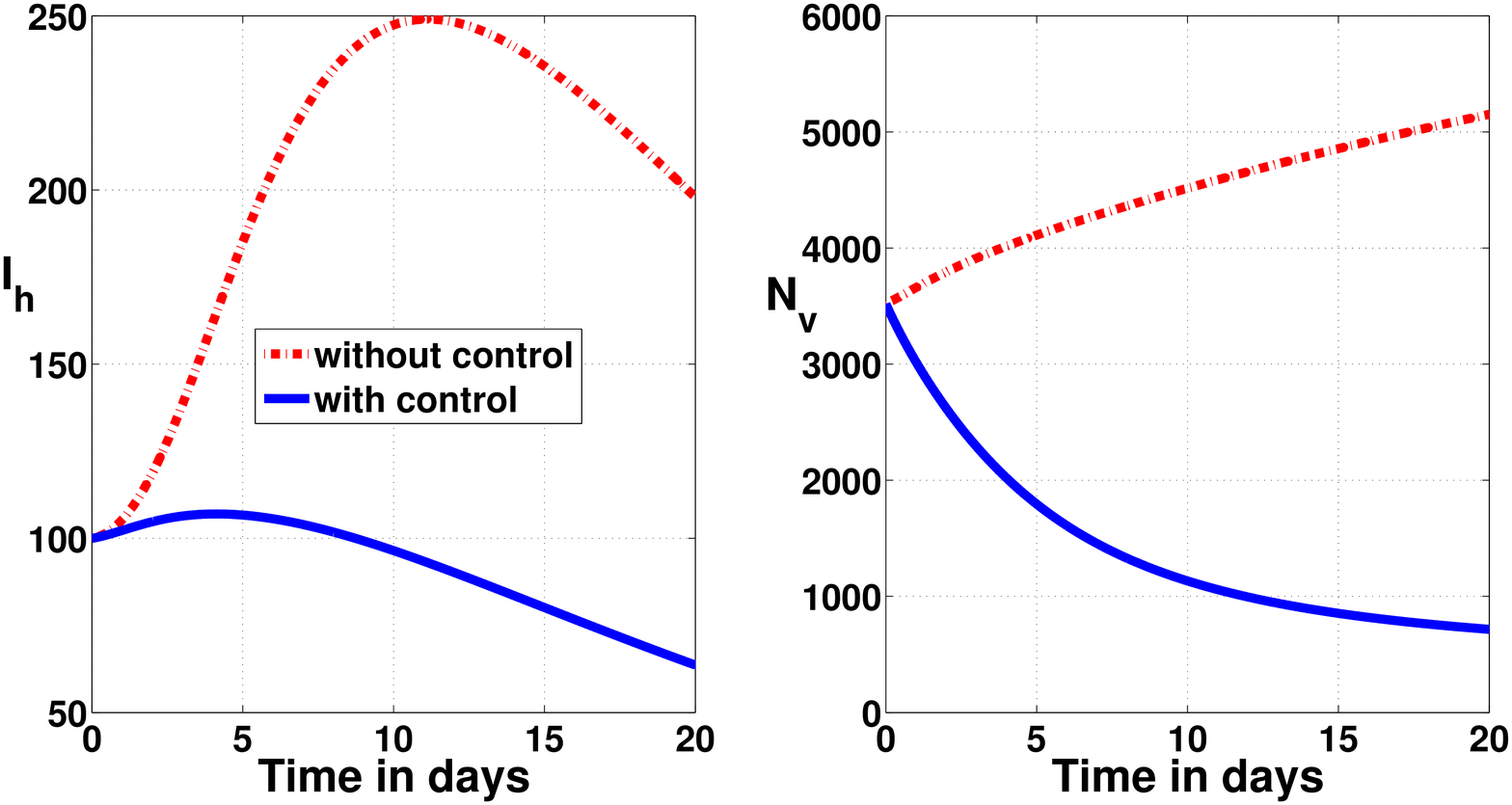}
		\includegraphics[scale=0.20]{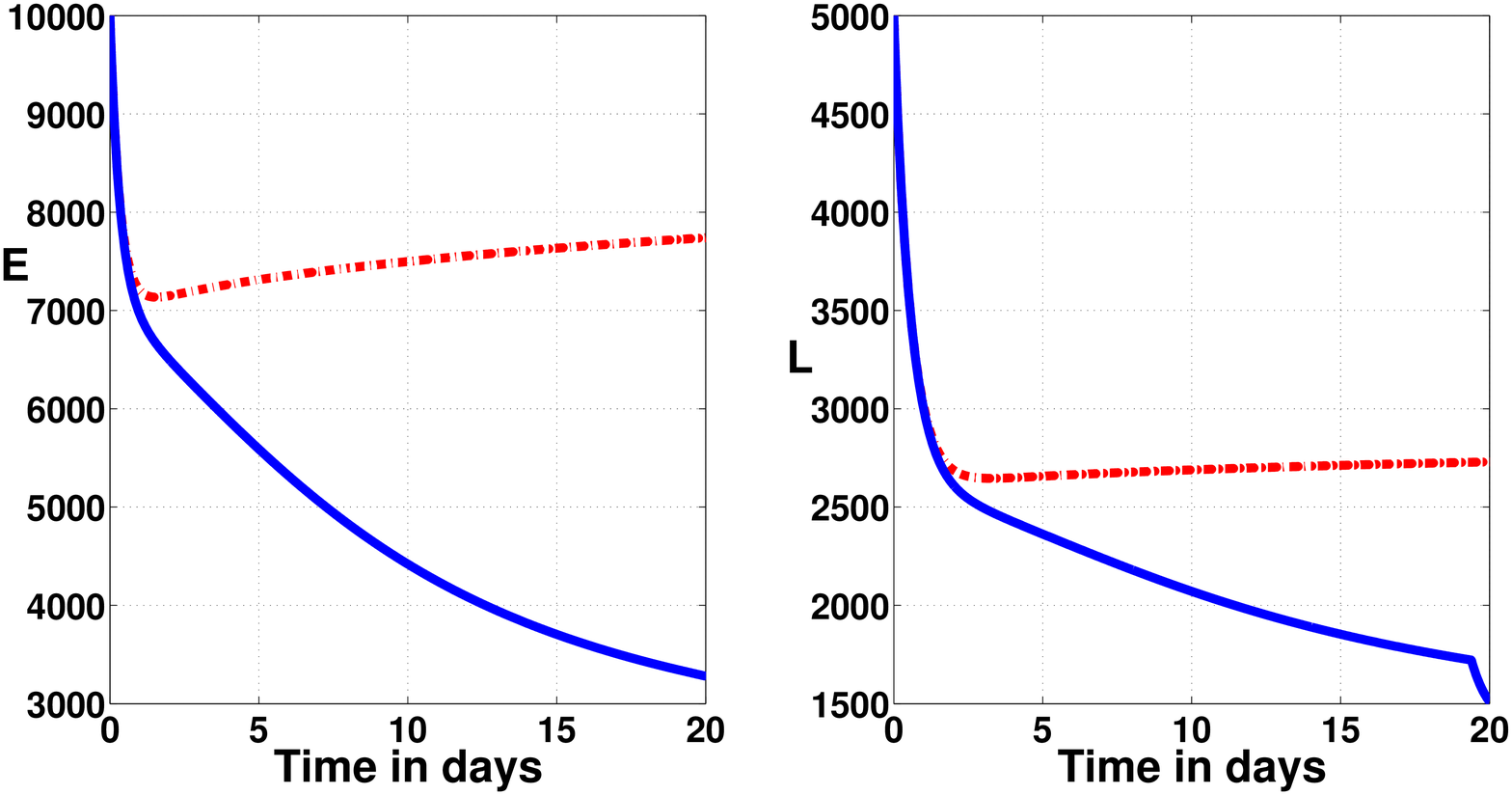}
		\caption{Simulation results of optimal control model \eqref{AR3optimal} showing the effect of using optimal vaccination combined with individual protection, adulticide and larvicide ($u_1\neq 0$, $u_2\neq 0$, $u_4\neq 0$, $u_5\neq 0$ ).\label{onlyu1+u2+u4+u5}}
	\end{center}
\end{figure}

\paragraph{(v) The combination of all the five controls}
$\;$

In this strategy, the combination of all the five controls is applied. On figure~\ref{Fullcontrol}, we observed that combining all the five controls give a better result in a decrease in the number of infected humans ($I_h$), as well as, the total number of vector population ($N_v$), and the aquatic vector populations ($E$ and $L$).
\begin{figure}[h!]
	\begin{center}
		\includegraphics[scale=0.20]{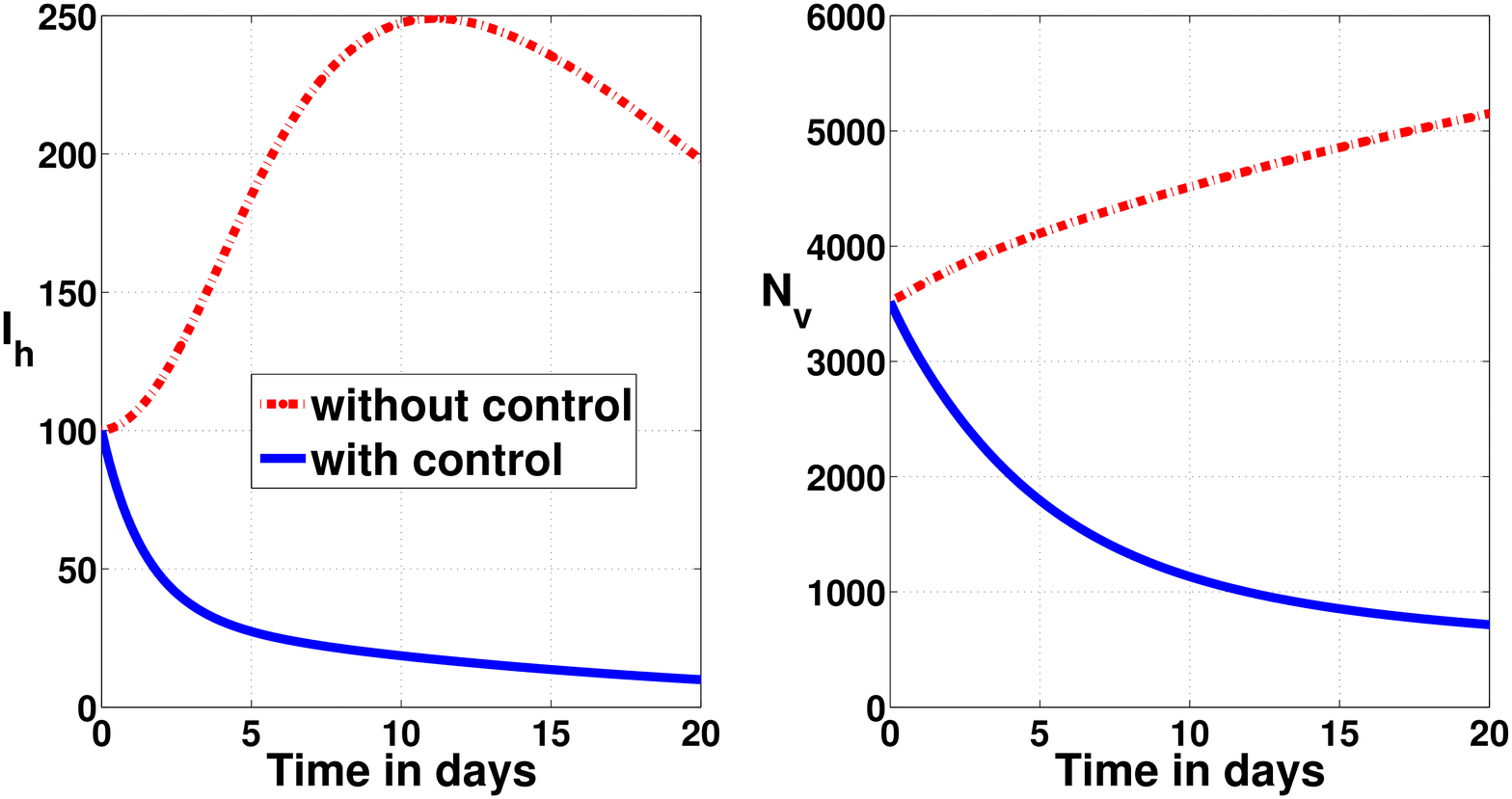}
		\includegraphics[scale=0.20]{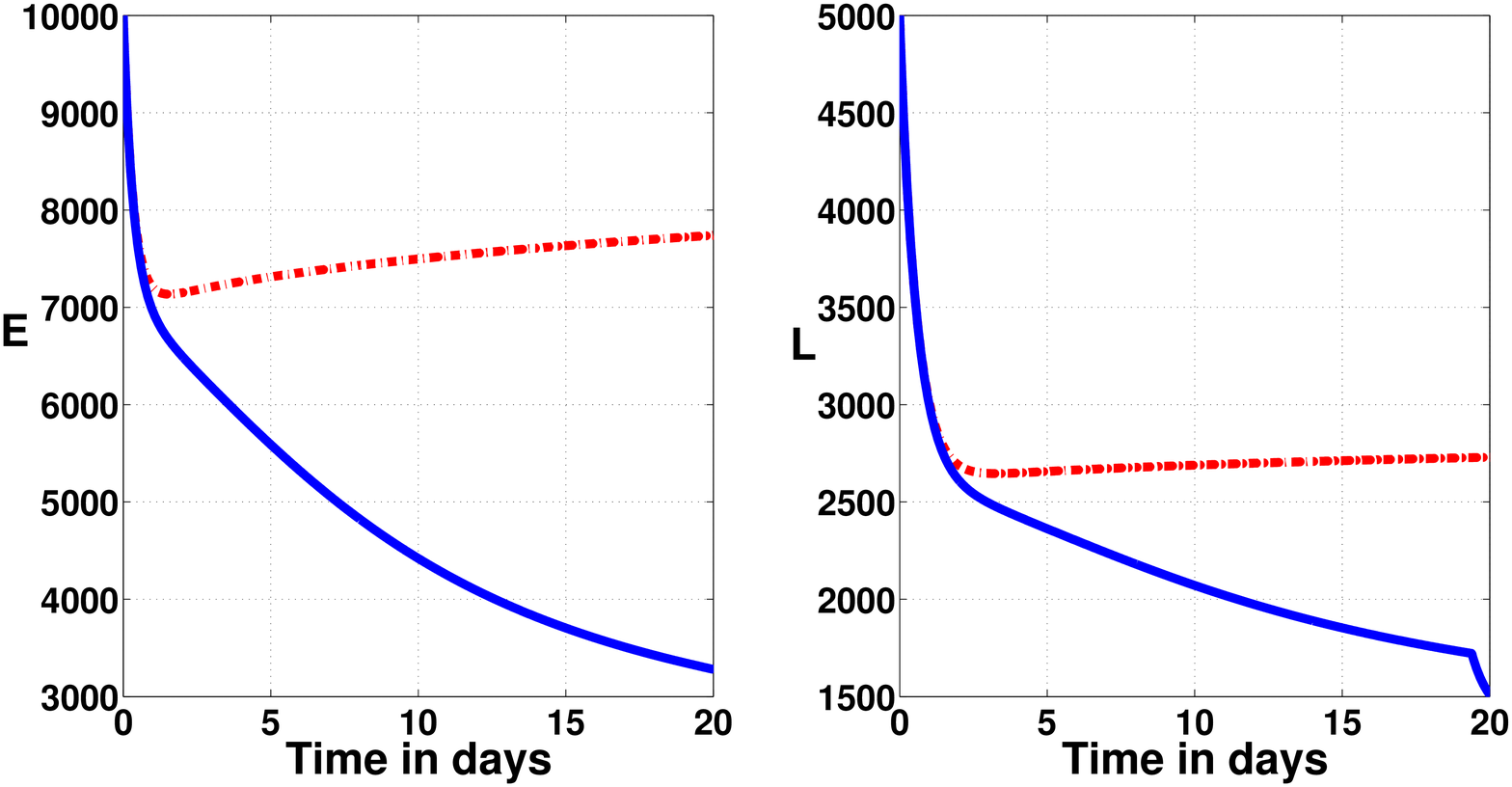}
		\caption{Simulation results of optimal control model \eqref{AR3optimal} showing the effect of using the combination of all the five controls ($u_i\neq 0$, $i=1,\hdots, 5$).\label{Fullcontrol}}
	\end{center}
\end{figure}

Figures \ref{onlyu1+u2+u3+u4}, \ref{onlyu1+u3+u4+u5}, \ref{onlyu1+u2+u4+u5}, and \ref{Fullcontrol} have the same shape, which it is difficult to say what is the best control strategy. In the following, we make an efficiency analysis and a cost effectiveness analysis to determine the best strategy in terms of efficiency and cost

\subsection{Efficiency analysis}
In line with Yang and Ferreira~\cite{YangFerreira2008}, Dumont and Chiroleu~\cite{duch}, and Carvalho et~{\it al.}~\cite{Carvalho2015}, we compare the effects of different strategies applied on the arboviral diseases, by the introduction of the efficiency index, designed by $\mathbb F$. To this aim, we define the variable $\mathcal A$ as the area comprised between the curve of the symptomatic infectious human ($I_h$) population size, for instance, and the time axis during the period of time from 0 to $t_f$, as

\begin{equation}
\label{area}
\mathcal A=\int_{0}^{t_h}I_h(t)dt,
\end{equation}
which measures the cumulated number of infectious human during the time interval $[0,t_f]$
~\cite{Carvalho2015,YangFerreira2008}. Hence the efficiency index, $\mathbb F$, be can defined by 

\begin{equation}
\label{effecFormulae}
\mathbb F=\left(1-\dfrac{\mathcal A^{c}_{h}}{\mathcal A^{(0)}_{h}}\right)\times 100 ,
\end{equation}
where $\mathcal A^{c}_{h}$ and $\mathcal A^{0}_{h}$ are the cumulated number of infectious human with and without the different controlling mechanisms, respectively. So, It follows that the best strategy will be the one whom efficiency index will be the biggest~\cite{Carvalho2015,YangFerreira2008}.

With the previous simulations, we resume the efficiency index of different strategies in the Tables~\ref{efficiencyTable1}. %to \ref{efficiencyTable4}.
%\subsubsection{Impact of control strategies in decreasing the total number of infected human ($I_h$)}
\begin{table}[h!]
	\begin{center}
		\caption{Table of efficiency index (the case of infected humans). \label{efficiencyTable1}} 
		\begin{tabular}{|l|l|l|l|l|l|}
			\hline
			\hline
			Strategy&$\mathcal A_{I_h}=$&$\mathbb F_{I_h}=100\times$&Strategy&$\mathcal A_{I_h}=$&$\mathbb F_{I_h}=100\times$\\
			&$\int_{0}^{t_h}I_h(t)dt$&$\left(1-\dfrac{\mathcal A^{c}_{I_h}}{\mathcal A^{(0)}_{I_h}}\right)$
			&&$\int_{0}^{t_h}I_h(t)dt$&$\left(1-\dfrac{\mathcal A^{c}_{I_h}}{\mathcal A^{(0)}_{I_h}}\right)$\\
			\hline
			No controls&$4.1052\times 10^{3}$&0\%&$Z_{1}$&$490.6350$&88.048451\%\\
			\hline
			$Z_{2}$&$528.9018$&87.116296\%&$Z_{3}$&$580.5772$&85.857517\%\\
			\hline
			$Z_{4}$&$1.8391\times 10^{3}$&55.200721\%&Z&$490.6350$&88.048451\%\\
			\hline
		\end{tabular}
	\end{center}
\end{table}

From table~\ref{efficiencyTable1}, we can conclude that the best strategies are $Z_1$ and $Z$.
\begin{remark}
	The same reasoning (about efficiency analysis) can be done for vector population, by replacing 
	$I_{h}$ by $N_{v}$, and $\mathcal A_{h}$ by $\mathcal A_{N_v}$ in Eq.~\eqref{area} and \eqref{effecFormulae}, respectively.
\end{remark}

\subsection{Cost Effectiveness Analysis}
\label{costeffectiveness}
The analysis of efficiency allowed us to determine the most efficient strategy, regardless of the cost associated with each control. In what follows, we will among the strategies listed in Table~\ref{ControlStrategies}, determine which one is the most efficient and which can be implemented at lower cost. To this aim, we follow Okosun and co-workers~\cite{Okosun2011}, and use cost effectiveness analysis to determine the most cost effective strategy to use the controls of arboviral diseases (Strategies $Z_1-Z$). To this aim, we calculate the Incremental Cost-Effectiveness Ratio (ICER) which is generally described as the additional cost per additional health outcome (see \cite{Okosun2011}). 

Based on the model simulation results, we rank the strategies in decreasing number of Total number of infectious individuals (see Table~\ref{CostTable}).
\begin{table}[H]
	\begin{center}
		\caption{Cost Effectiveness of different  strategies.} \label{CostTable}
		\begin{tabular}{|c|c|c|c|}
			\hline
			Strategies&Total number of infected &Total infection &Total cost(\$)\\
			&individuals &averted&\\
			\hline
			$Z_{4}$&1839&87538&$7.6218\times 10^{8}$\\
			\hline
			$Z_{3}$&581&88796&$7.6093\times 10^{8}$\\
			\hline
			$Z_{2}$&529&88848&$1.6452\times 10^{9}$\\
			\hline
			$Z_{1}$&491&88886&$7.6081\times 10^{8}$\\
			\hline
			$Z$&491&88886&$7.6081\times 10^{8}$\\
			\hline
		\end{tabular}
	\end{center}
\end{table}
 
The difference between the total number of infectious individuals without control and the total number of infectious individuals with control was used to determine the "total number of infection averted" used in the tables of cost--effectiveness analysis.
 
We obtain the ICER of strategies (i) and (j) by applying the formula given by equation \eqref{costi--costj}.
\begin{equation}
\label{costi--costj}
\begin{array}{l}
ICER(i)=\dfrac{Total\,\, cost(i)}{Total\,\, infection\,\, averted(i)}\\
\begin{split}
ICER(j)&=\dfrac{Total\,\,cost(j)-Total\,\, cost(i)}{Total\,\, infection\,\, averted(j)-Total\,\, infection\,\, averted(i)}\\
\end{split}
\end{array}
\end{equation}
\begin{table}[H]
	\begin{center}
		\caption{Strategy $Z_4$ vs  Strategy $Z_3$.\label{Z4vsZ3}}
		\begin{tabular}{ccc}
			\hline
			Strategies&Total infection averted  &Total cost ($\$$)\\
			\hline
			$Z_4$&87538&$7.6218\times 10^{8}$\\
			\hline
			$Z_3$&88796&$7.6093\times 10^{8}$\\
			\hline
		\end{tabular}
	\end{center}
\end{table}

\begin{equation}
	\label{costZ4--costZ3}
	\begin{array}{ll}
		ICER(Z_4)&=\dfrac{7.6218\times 10^{8}}{87538}=8706.8473\\
		ICER(Z_3)&=\dfrac{(7.6093-7.6218)\times 10^{8}}{88796-87538}=-993.6407\\
	\end{array}
\end{equation}
The comparison between ICER($Z_4$) and ICER($Z_3$) shows a cost saving of \$ 993.6407 for strategy $Z_3$ over strategy $Z_4$. The negative ICER for strategy $Z_3$ indicates that the strategy $Z_4$ is "strongly dominated". That is, strategy $Z_4$ is more costly and less effective than strategy $Z_3$. Therefore, strategy $Z_4$, the strongly dominated is excluded from the set of alternatives so it does not consume limited resources. 

We exclude strategy $Z_4$ and compare strategy $Z_3$ with $Z_2$. From table~\ref{CostTable}, we have:
\begin{table}[H]
	\begin{center}
		\caption{Strategy $Z_3$ vs  Strategy $Z_2$.\label{Z3vsZ2}}
		\begin{tabular}{ccc}
			\hline
			Strategies&Total infection averted  &Total cost ($\$$)\\
			\hline
			$Z_3$&88796&$7.6093\times 10^{8}$\\
			\hline
			$Z_2$&88848&$1.6452\times 10^{9}$\\
			\hline
		\end{tabular}
	\end{center}
\end{table}

This leads to the following values for the ICER,
\begin{equation}
\label{costZ3--costZ2}
\begin{array}{l}
ICER(Z_3)=\dfrac{7.6093\times 10^{8}}{88796}=8569.4175 \\
ICER(Z_2)=\dfrac{1.6452\times 10^{9}-7.6093\times 10^{8}}{88848-88796}=17005192\\
\end{array}
\end{equation}

The comparison between ICER($Z_3$) and ICER($Z_2$) shows a cost saving of \$ 8569.4175 for strategy $Z_3$ over strategy $Z_2$. That is, strategy $Z_2$ is more costly and less effective than strategy $Z_3$. Therefore, strategy $Z_2$, the strongly dominated is excluded.

We then compare strategy $Z_3$ with $Z_1$. From table~\ref{CostTable}, we have:
\begin{table}[H]
	\begin{center}
		\caption{Strategy $Z_3$ vs  Strategy $Z_1$.\label{Z3vsZ1}}
		\begin{tabular}{ccc}
			\hline
			Strategies&Total infection averted  &Total cost ($\$$)\\
			\hline
			$Z_3$&88796&$7.6093\times 10^{8}$\\
			\hline
			$Z_1$&88886&$7.6081\times 10^{8}$\\
			\hline
		\end{tabular}
	\end{center}
\end{table}

This leads to the following values for the ICER,
\begin{equation}
\label{costZ3--costZ1}
\begin{array}{l}
ICER(Z_3)=\dfrac{7.6093\times 10^{8}}{88796}=8569.4175 \\
ICER(Z_1)=\dfrac{7.6081\times 10^{8}-7.6093\times 10^{8}}{88886-88796}=-1333.3333 \\
\end{array}
\end{equation}
The comparison between ICER($Z_3$) and ICER($Z_1$) shows a cost saving of \$ 1333.3333 for strategy $Z_1$ over strategy $Z_3$. So the strategy $Z_3$ is "strongly dominated". That is, strategy $Z_3$ is more costly and less effective than strategy $Z_1$. Therefore, strategy $Z_3$, the strongly dominated is excluded.

We then compare strategy $Z_1$ with $Z$. From table~\ref{CostTable}, we have
\begin{table}[H]
	\begin{center}
		\caption{Strategy $Z_1$ vs  Strategy $Z$.\label{Z1vsZ}}
		\begin{tabular}{ccc}
			\hline
			Strategies&Total infection averted  &Total cost ($\$$)\\
			\hline
			$Z_1$&88886&$7.6081\times 10^{8}$\\
			\hline
			$Z$&88886&$7.6081\times 10^{8}$\\
			\hline
		\end{tabular}
	\end{center}
\end{table}
%This leads to the following values for the ICER,
%\begin{equation}
%\label{costB--costC}
%\begin{array}{l}
%ICER(D)=\dfrac{3.8760\times\,10^{8}}{2819}=137,495.57 \\
%ICER(F)=\dfrac{3.2326\times\,10^{8}-3.8760\times\,10^{8}}{5781-2819}=-130,853.5   \\
%\end{array}
%\end{equation}
From Table \ref{Z1vsZ}, it follows that strategy $Z_3$ is equivalent in term of efficiency and cost at strategy $Z$.  

With these results, we conclude that the strategies $Z$ (combination of the five control) and $Z_3$ (vaccination $u_1$ combined with personal protection $u_2$, the treatment of individuals with clinical signs of the disease $u_3$, killing adult vectors with adulticide, $u_4$) are most cost-effective that all the strategies studied in this work.

\section{Conclusion}
In this paper, we derived and analysed a model for the control of arboviral diseases with non linear form of infection and complete stage structured model for vectors, and which takes into account a vaccination with waning immunity, treatment, individual protection and vector control strategies (adult vectors, eggs and larvae reduction strategies). 

We have begun by calculate the net reproductive number $\mathcal{N}$ and the basic reproduction number $\mathcal R_0$, of the basic model (the model without control), and investigate the existence and stability of equilibria. The stability analysis revealed that for $\mathcal{N}\leq 1$, the trivial equilibrium is globally asymptotically stable. When $\mathcal{N}>1$ and $\mathcal R_0<1$, the disease--free equilibrium is locally asymptotically stable. We have found that the model exhibits backward bifurcation. The epidemiological implication of this phenomenon is that for effective eradication and control of diseases, $\mathcal R_0$ should be less than a critical values less than one. We have explicitly derived threshold conditions for saddle--node bifurcation in term of the transmission rate, $\beta_{hv}$, as well as the basic reproduction number. Then, we have proved, that the disease--induced death is the principal cause of the backward bifurcation phenomenon in model. 

Using data from literature related to the transmission dynamics of dengue fever, we also estimated the probability  that the model predicts the existence of multiple endemic equilibrium and of the likely stability of the disease--free equilibrium point, through Latin Hypercube Sampling (LHS).
The result showed that the model is in an endemic state, since the mean of $\mathcal R_0$ is greater than unity. Then, using global sensitivity analysis, we have computed the Partial Rank Correlation Coefficients between $\mathcal R_0$ and each parameter of the model. This analysis showed that the basic reproduction number is sensitive to changes in the parameters $\beta_{vh}$, the probability of transmission of infection from an infected human to a susceptible vector, $\beta_{hv}$, the probability of transmission of infection from an infected vector to a susceptible human, $a$, the average number of bites, $\theta$, the maturation rate from pupae to adult, $\mu_v$, the natural mortality rate of vector, $\Lambda_h$, the recruitment rate of humans and $l$, the transfer rate from larvae to pupae, which suggested that the control of the epidemic of arboviral diseases pass through a combination of immunization against arbovirus (vaccination of susceptible humans), individual protection against vector bites, treatment of infected human, vector control through chemical interventions (adulticide and larvicide).

We then considered five time dependent controls as a way out, to ensure the eradication of the disease in a finite time. We performed optimal control analysis of the model. In this light, we addressed the optimal control by deriving and analysing the conditions for optimal eradication of the disease and in a situation where eradication is impossible or of less benefit compared with the cost of intervention, we also derived and analysed the necessary conditions for optimal control of the disease. 

From the numerical results and efficiency analysis, as well as, cost--effectiveness analysis, we concluded that the optimal strategy to effectively control arboviral diseases is the combination of vaccination, individual protection, treatment, and other mechanisms of vector control (by chemical intervention--the adulticides). 
However this conclusion must be taken with caution because of the uncertainties around the parameter values and to the budget/resource limitation. It is also important to note that in most of the work which speak of the optimal control of infectious diseases (see e.g.~\cite{Adams2004,bbMBE2011,Zaman2008,Jung2002,Yusuf2012}), and particularly the arboviral diseases
~\cite{Aldila2013,BlaynehaetAl,moaaHee2012,HelenaSofiaRodrigues2014,Dias2015}, cost effectiveness analysis, to our knowledge, is not did by the authors. This therefore represents a contribution to the study of optimal control models of arboviral diseases.	

In addition, the utilization of a vaccine of small efficacy could have a negative impact on the health of the population. Indeed, for the particular case of dengue, the fact that sequential infections with different strains can cause severe forms of the disease must be taken into account. For instance, it is not currently known if a vaccinated individual, for which the efficacy for a given strain is small, can develop a severe form of the disease when coming into contact with such a strain. Also, its efficiency is higher in children 9-16 years (two thirds are immune) and in individuals who have already been infected. The vaccine appears to contrast against-productive in younger children without the researchers knowing why. The results of clinical trials - which involved more than 40,000 volunteers-- were therefore not lifted all the uncertainties about the impact of the vaccine~\cite{LeMonde2015}. Therefore, pending the completion of Phase III trials on the efficacy of the vaccine against dengue (Dengvaxia$^{\circledR}$), and therefore its acceptance by public health organizations such as WHO and the Centre for Disease Control (CDC), it is important to focus on other control mechanisms.

All simulated intervention combinations can be considered cost-effective in the context of available resources for health in countries affected by arboviruses. %Increasing the coverage of vector control interventions has simulated greater impact compared to the addition of a waning vaccine current implementation strategies, suggesting that in waiting of a vaccine high efficacy, the countries affected by such diseases must increase awareness campaigns of the local populations, enabling the systematic destruction of breeding sites of the vectors, and leading to a reduction of the number of  adult vectors. 
These results have the potential to help managers control programs against arbovirus infections in high endemicity countries by modifying the implementation of current interventions, or by adding new control mechanisms.

\section*{Acknowledgments}
The first author (Hamadjam ABBOUBAKAR) thanks the  Direction of UIT of Ngaoundere for the financial help granted under research missions in the year 2015. The first author also thanks the Department of Mathematics of ENS of Yaounde for their hospitality during the research visit where the work was initiated.
\appendix
\section{Proof of Theorem~\ref{th3Basic}}
\label{AppBasic1}
The Jacobian matrix of $f=\left(\dot{S}_h,\dot{E}_h,\dot{I}_h,\dot{R}_h,\dot{S}_v,\dot{E}_v,\dot{I}_v,\dot{E},\dot{L},
\dot{P}\right)^{T}$ at the Trivial equilibrium is given by
\begin{equation}
\label{Jacm1Ar3ap}
Df(\mathcal{E}_{0})=\left( \begin{array}{cccccccccc}
-\mu_h&0&0&0&0&-a\beta_{hv}\eta_v&-a\beta_{hv}&0&0&0\\
0&-k_3&0&0&0&a\beta_{hv}\eta_v&a\beta_{hv}&0&0&0\\
0&\gamma_h&-k_4&0&0&0&0&0&0&0\\
0&0&\sigma&-\mu_h&0&0&0&0&0&0\\
0&0&0&0&-\mu_v&0&0&0&0&\theta\\
0&0&0&0&0&-k_9&0&0&0&0\\
0&0&0&0&0&\gamma_v&-\mu_v&0&0&0\\
0&0&0&0&\mu_b&\mu_b&\mu_b&-k_5&0&0\\
0&0&0&0&0&0&0&s&-k_6&0\\
0&0&0&0&0&0&0&0&l&-k_7\\
\end{array} \right) .
\end{equation} 

The characteristic polynomial of $Df(\mathcal{E}_{0})$ is given by:
\[
P(\lambda)=(\lambda-\mu_h)^{2}(\lambda-k_3)(\lambda-k_4)(\lambda-k_9)(\lambda-\mu_v)\phi_1(\lambda)
\]
where\\
$
\phi_1(\lambda)=\lambda^{4}+A_1\lambda^{3}+A_2\lambda^{2}+A_3\lambda+A_4,
$
with
\[
\begin{array}{l}
A_1=\mu_{v}+k_{7}+k_{6}+k_{5},\;\;
A_2=\left(k_{7}+k_{6}+k_{5}\right)\mu_{v}+\left(k_{6}+k_{5}\right)k_{7}+k_{5}k_{6},\\
A_3=\left(\left(k_{6}+k_{5}\right)k_{7}+k_{5}k_{6}\right)\mu_{v}+k_{5}k_{6}k_{7},\;\; 
A_4=k_{5}k_{6}k_{7}\mu_{v}\left(1-\mathcal N\right).
\end{array}
\]
The roots of $P(\lambda)$ are $\lambda_1=-\mu_h$, $\lambda_1=-k_3$, $\lambda_2=-k_4$, $\lambda_3=-\mu_v$, $\lambda_4=-k_9$, and the others roots are the roots of $\phi_1(\lambda)$. Since $\mathcal{N}<1$, it is clear that all coefficients of $\phi_1(\lambda)$ are always positive. Now we just have to verify that the Routh--Hurwitz criterion holds for polynomial $\phi_2(\lambda)$.
To this aim, setting 
$H_1=A_1$, $H_2=\begin{vmatrix}
A_1&1\\
A_3&A_2
\end{vmatrix}$,
$H_3=\begin{vmatrix}
A_1&1&0\\
A_3&A_2&A_1\\
0&A_4&A_3
\end{vmatrix}$,
$H_4=\begin{vmatrix}
A_1&1&0&0\\
A_3&A_2&A_1&1\\
0&A_4&A_3&A_2\\
0&0&0&A_4
\end{vmatrix}=A_4H_3$.\\
The Routh-Hurwitz criterion of stability of the trivial equilibrium $\mathcal{E}^{0}$ is given by
\begin{equation}
\label{routh-Hurwith}
\left\lbrace 
\begin{array}{c}
H_1>0\\
H_2>0\\
H_3>0\\
H_4>0
\end{array}\right.\Leftrightarrow
\left\lbrace 
\begin{array}{c}
H_1>0\\
H_2>0\\
H_3>0\\
A_4>0
\end{array}\right.
\end{equation}
We have $H_1=A_1>0$,
\[
\begin{split}
H_2&=A_1A_2-A_3\\
&=\left(k_{7}+k_{6}+k_{5}\right)\,\mu_{v}^2+\left(k_{7}^2+\left(2\,k
_{6}+2\,k_{5}\right)\,k_{7}+k_{6}^2+2\,k_{5}\,k_{6}+k_{5}^2\right)\,
\mu_{v}+\left(k_{6}+k_{5}\right)\,k_{7}^2\\
&+\left(k_{6}+k_{5}^2\right)^2k_{7}
+k_{5}\,k_{6}^2+k_{5}^2\,k_{6},
\end{split}
\]
{\scriptsize
	\[
	\begin{split}
	H_3&=A_1A_2A_3-A^{2}_1A_4-A^{2}_3\\
	&=\left(\left(k_{6}+k_{5}\right)k_{7}^2+\left(k_{6}^2+2k_{5}k_{
		6}+k_{5}^2\right)k_{7}+k_{5}k_{6}^2+k_{5}^2k_{6}\right)\mu_{
		v}^3+\left(\mu_{b}ls\theta+\left(k_{6}+k_{5}\right)k_{7}^
	3+\left(2k_{6}^2+4k_{5}k_{6}+2k_{5}^2\right)k_{7}^2\right.\\
	&\left.+\left(
	k_{6}^3+4k_{5}k_{6}^2+4k_{5}^2k_{6}+k_{5}^3\right)k_{7}+k
	_{5}k_{6}^3+2k_{5}^2k_{6}^2+k_{5}^3k_{6}\right)\mu_{v}^2+
	\left(\left(2k_{7}+2k_{6}+2k_{5}\right)\mu_{b}ls
	\theta+\left(k_{6}^2+2k_{5}k_{6}+k_{5}^2\right)k_{7}^3\right.\\
	&\left.+
	\left(k_{6}^3+4k_{5}k_{6}^2+4k_{5}^2k_{6}+k_{5}^3\right)k
	_{7}^2+\left(2k_{5}k_{6}^3+4k_{5}^2k_{6}^2+2k_{5}^3k_{6}
	\right)k_{7}+k_{5}^2k_{6}^3+k_{5}^3k_{6}^2\right)\mu_{v}\\
	&+\left(k_{7}^2+\left(2k_{6}+2k_{5}\right)k_{7}+k_{6}^2+2k_{5}
	k_{6}+k_{5}^2\right)\mu_{b}ls\theta+\left(k_{5}k_{6}^
	2+k_{5}^2k_{6}\right)k_{7}^3+\left(k_{5}k_{6}^3+2k_{5}^2k
	_{6}^2+k_{5}^3k_{6}\right)k_{7}^2+\left(k_{5}^2k_{6}^3+k_{5}^3
	k_{6}^2\right)k_{7}
	\end{split}
	\]
}
We always have $H_1>0$, $H_2>0$, $H_3>0$ and  $H_4>0$ if $\mathcal{N}<1$. Thus, the trivial equilibrium 
$\mathcal{E}_{0}$ is locally asymptotically stable whenever $\mathcal{N}<1$.

We assume the net reproductive number $\mathcal{N}>1$. Following the procedure and the
notation in \cite{vawa02}, we may obtain the basic reproduction number $\mathcal R_0$ as the dominant eigenvalue of the \emph{next--generation matrix} \cite{dihe,vawa02}. Observe that model~\eqref{BasicModel} has four infected populations, namely $E_h$, $I_h$, $E_v$ and $I_v$. It follows that the matrices $F$ and $V$ defined in \cite{vawa02}, which take into account the new infection terms and remaining transfer terms, respectively, are given by\\
$
F=\left( \begin{array}{cccc}
0&0&\beta_{hv}\eta_v&\beta_{hv}\\
0&0&0&0\\
\dfrac{\beta_{vh}\eta_vN^{0}_v}{N^{0}_h}&\dfrac{\beta_{vh}N^{0}_v}{N^{0}_h}&0&0\\
0&0&0&0
\end{array}\right)$ and
$V=\left(\begin{array}{cccc}
k_3&0&0&0\\
-\gamma_h&k_4&0&0\\
0&0&k_9&0\\
0&0&-\gamma_v&k_8
\end{array}\right).
$

The dominant eigenvalue of the next--generation matrix $FV^{-1}$ is given by \eqref{R0ar3}.
The local stability of the disease--free equilibrium $\mathcal{E}_{1}$ is a direct consequence of Theorem 2 in \cite{vawa02}. This ends the proof.

\section{Proof of Theorem~\ref{GasTEBasic}}
\label{AppBasic2}
Setting $Y=X-TE$ with $X=(S_h,E_h,I_h,R_h,S_v,E_v,I_v,E,L,P)^{T}$,
$A_{88}=\left(k_5+\mu_b\dfrac{S_v+E_v+I_v}{K_E}\right)$, and $A_{99}=\left(k_6+s\dfrac{E}{\Gamma_L}\right)$.
we can rewrite \eqref{AR3} in the following manner
\begin{equation}
\label{model2Ar2} \dfrac{dY}{dt}=\mathcal{B}(Y)Y
\end{equation}
where
{\footnotesize 
	$$ \mathcal{B}(Y)=\left( \begin{array}{cccccccccc}
	-(\lambda_h+\mu_h)&0&0&0&0&-\dfrac{a\beta_{hv}\eta_vS^{0}_h}{N_h}&-\dfrac{a\beta_{hv}S^{0}_h}{N_h}&0&0&0\\
	\lambda_h&-k_3&0&0&0&\dfrac{a\beta_{hv}\eta_vS^{0}_h}{N_h}&\dfrac{a\beta_{hv}S^{0}_h}{N_h}&0&0&0\\
	0&\gamma_h&-k_4&0&0&0&0&0&0&0\\
	0&0&\sigma&-\mu_h&0&0&0&0&0&0\\
	0&0&0&0&-(\lambda_v+\mu_v)&0&0&0&0&\theta\\
	0&0&0&0&\lambda_v&-k_9&0&0&0&0\\
	0&0&0&0&0&\gamma_v&-\mu_v&0&0&0\\
	0&0&0&0&\mu_b&\mu_b&\mu_b&-A_{88}&0&0\\
	0&0&0&0&0&0&0&s&-A_{99}&0\\
	0&0&0&0&0&0&0&0&l&-k_7\\
	\end{array}\right).
	$$
}
It is clear that $Y=(0,0,0,0,0,0,0,0,0,0)$ is the only equilibrium. 
Then it suffices to consider the following Lyapunov function $\mathcal{L}(Y)=<g,Y>$ were
{\footnotesize $g=\left(1,1,1,1,1,1,1,\dfrac{k_8}{\mu_b},\dfrac{k_5k_8}{\mu_bs},\dfrac{k_5k_6k_8}{\mu_bsl}\right)$}.
Straightforward computations lead that
{\small
	\[
	\begin{split}
	\dot{\mathcal{L}}(Y)&=<g,\dot{Y}>\overset{\mathrm{def}}{=}
	<g,\mathcal{B}(Y)Y>\\
	&=-\mu_hY_1-\mu_hY_2-(\mu_h+\delta)Y_3-\mu_hY_4
	-\dfrac{k_8}{K_E}(Y_5+Y_6+Y_7)-\dfrac{k_5k_8}{\mu_bK_L}Y_8Y_{9}+\theta\left(1-\dfrac{1}{\mathcal{N}}\right)Y_{10}.
	\end{split}
	\]
}
We have $\dot{\mathcal{L}}(Y)<0$ if $\mathcal{N}\leq 1$ and $\dot{\mathcal{L}}(Y)=0$ if $Y_i=0$, $i=1,2,\hdots, 10$ (i.e $S_h=S^{0}_h$ and $E_h=I_h=R_h=S_v=E_v=I_v=E=L=P=0$). Moreover, the maximal invariant set contained in $\left\lbrace \mathcal{L}\vert \dot{\mathcal{L}}(Y)=0\right\rbrace $ is $\left\lbrace(0,0,0,0,0,0,0,0,0,0,0)\right\rbrace$. Thus, from Lyapunov theory, we deduce that $\left\lbrace(0,0,0,0,0,0,0,0,0,0,0)\right\rbrace$ and thus, $\mathcal{E}_0$, is GAS if and only if $\mathcal{N}\leq 1$.

\section{Proof of Theorem~\ref{EEBasic}}
\label{AppBasic3}
\begin{proof}
In order to determine the existence of endemic equilibria, i.e., equilibria with all positive components, say
\[
\mathcal{E}^{**}=\left(S^{*}_h,V^{*}_h,E^{*}_h,I^{*}_h,R^{*}_h,S^{*}_v,E^{*}_v,I^{*}_v,E,L,P\right),
\]
we have to look for the solution of the algebraic system of equations obtained by equating
the right sides of system~\eqref{BasicModel} to zero.

Solving the equations in the model~\eqref{BasicModel} in terms of $\lambda^{*}_h$ and $\lambda^{*}_v$, gives
\begin{equation}
\label{EEhBasic}
\begin{array}{l}
S^{*}_h=\dfrac{\Lambda_h}{\mu_h+\lambda^{*}_{h}},\;\;\;
E^{*}_h=\dfrac{\lambda^{*}_{h}S^{*}_h}{k_3},\;\;I^{*}_h=\dfrac{\gamma_h\lambda^{*}_{h}S^{*}_h}{k_3k_4},\;\;R^{*}_h=\dfrac{\sigma\gamma_h\lambda^{*}_{h}S^{*}_h}{\mu_hk_3k_4},
\end{array}
\end{equation}
and
\begin{equation}
\label{EEvBasic}
\begin{array}{l}
S^{*}_v=\dfrac{\theta P}{(\lambda^{*}_v+k_8)},\,\;\; E^{*}_v=\dfrac{\theta P\lambda^{*}_v}{k_9(\lambda^{*}_v+k_8)},\;\;I^{*}_v=\dfrac{\gamma_v\theta P\lambda^{*}_v}{k_8k_9(\lambda^{*}_v+k_8)},\\
E=\dfrac{\mu_b\theta \Gamma_E P}{(k_5k_8\Gamma_E+\mu_b\theta P)},\;\;
L=\dfrac{\mu_b\theta s\Gamma_E\Gamma_LP}{k_6\Gamma_L(k_5k_8\Gamma_E+\mu_b\theta P)+s\mu_b\theta \Gamma_E P},
\end{array}
\end{equation}
where $P$ is solution of the following equation
\begin{equation}
\label{eggs}
f(P)=-k_7P\left[\mu_b\theta(s\Gamma_E+k_6\Gamma_L)P-k_5k_6k_8\Gamma_E\Gamma_L(\mathcal{N}-1)\right]=0
\end{equation}
A direct resolution of the above equation give $P=0$ or $P=\dfrac{k_5k_6k_8\Gamma_E\Gamma_L(\mathcal{N}-1)}{\mu_b\theta(s\Gamma_E+k_6\Gamma_L)}$.

Note that $P=0$ corresponds to the trivial equilibrium $\mathcal{E}_{0}$. Now we consider $P>0$ i.e. $\mathcal{N}>1$. Substituting \eqref{EEhBasic} and \eqref{EEvBasic} into the expression of $\lambda^{*}_h$ and $\lambda^{*}_v$ and simplifying, lead the non-zero equilibria of the basic model~\eqref{BasicModel} satisfy the quadratic equation
\begin{equation}
\label{eqEEsansBasic}
k_{9}\mu_{b}\Lambda_{h}(s\Gamma_E+k_6\Gamma_L)\left[ d_2(\lambda^{*}_h)^{2}+d_1\lambda^{*}_h+d_0\right] =0,
\end{equation}
where $d_i$, $i=0,1,2$, are given by 
\begin{subequations}
	\begin{align}
	%\mathcal R_{c}&=\sqrt{\dfrac{\left[ 2k_{8}(k_{3}k_{4}-\delta\gamma_{h})+(\eta_hk_4+\gamma_h)a\mu_{h}\beta_{vh}\right]}{k_{3}k_{4}k_{8}}},\\
	%\nonumber
	d_2&=-k_{2}\left[k_{10}a\mu_{h}\beta_{vh}+k_{2}k_{8}\right]<0, \\
	d_1&=k^{2}_{3}k^{2}_{4}k_{8}\mu_{h}(\mathcal R_{0}^{2}-\mathcal R^{2}_{c}),\\
	d_0 &=k^{2}_{3}k^{2}_{4}k_{8}\mu_{h}^2\left(\mathcal R_{0}^{2}-1\right).
	\end{align}
\end{subequations}
and $\mathcal R_0$ and $\mathcal R_c$ are given by \eqref{R0ar3} and \eqref{R_car3opc}, respectively.

This equation may be simply analyzed through the Descartes' rule of signs. First of all,
note that $d_2$ is negative. Therefore the following cases are possible:\\
\begin{enumerate}%[label=(\emph{\alph*})]
	\item There is a unique endemic equilibrium if $d_0>0$;
	\item There is a unique endemic equilibrium if
	\begin{equation}
	\label{oneee1ar3opc} \left(d_1>0\;\; \text{ and }\;\; d_0=0\right)\;\; \text{ or }\;\; (d_1 > 0\;\; \text{ and } \;\; d_0 <0\;\; \text{ and }\;\; d^{2}_1-4d_2d_0=0);
	\end{equation}
	\item There are two endemic equilibria if
	\begin{equation}
	\label{oneee2} d_1>0\;\; \text{ and }\;\; d_0<0\;\; \text{ and }\;\; d^{2}_1-4d_2d_0>0;
	\end{equation}
	\item There are no endemic equilibria otherwise.
\end{enumerate}

We observe $d_0>0$ is equivalent to $\mathcal R_0>1$ so statement (i) of Theorem~\ref{EEBasic} is equivalent to point (\emph{a}).

When $\mathcal R_0=1$, $d_0=0$. We observe that $d_1>0$ is equivalent to $\mathcal R_c<\mathcal R_0$. Therefore, when $\mathcal R_0=1$ and $\mathcal R_c<1$, $d_0=0$ and $d_1>0$,  so statement (ii) \emph{a}) of Theorem~\ref{EEBasic} follows from statement (\emph{b}) above. Since the condition $d_0 = 0$ does not appear elsewhere in statements (\emph{a}), (\emph{b}), or (\emph{c}) above, statement (ii) \emph{b}) of Theorem~\ref{EEBasic} follows from statement (\emph{d}) above.

When $\mathcal R_0<1$, $d_0<0$, and when $\mathcal R_c<\mathcal R_0$, $d_1>0$. We also note that for $\mathcal R_0<1$, when $d_1>0$, $\psi\leq 0$ because $\psi>0$ is equivalent to $d_1<0$. Indeed,
{\scriptsize
\begin{equation}
\begin{split}
\psi>0&\Longleftrightarrow k_{10}a\mu_{h}\beta_{vh}>\delta\gamma_hk_{8}\\
&\Longleftrightarrow k_{10}a\mu_{h}\beta_{vh}k_9+2k_{8}k_{9}k_{2}>\delta\gamma_hk_{8}k_9+2k_{8}k_{9}k_{2}\\
&\Longleftrightarrow a^{2}\beta_{hv}\beta_{vh}k_{3}k_{4}k_{10}k_{11}\mu_{h}N^{0}_{v} -k_{3}k_{4}\mu_{h}N^{0}_{h}(k_{10}a\mu_{h}\beta_{vh}k_9+2k_{8}k_{9}k_{2})<
k_{3}k_{4}k_{8}k_9\mu_{h}N^{0}_{h}\left[ (\mathcal R^{2}_0-1)k_3k_4 -k_3k_4+\delta\gamma_h)\right] \\
&\Longleftrightarrow a^{2}\beta_{hv}\beta_{vh}k_{3}k_{4}k_{10}k_{11}\mu_{h}N^{0}_{v} -k_{3}k_{4}\mu_{h}N^{0}_{h}(k_{10}a\mu_{h}\beta_{vh}k_9+2k_{8}k_{9}k_{2})<
k_{3}k_{4}k_{8}k_9\mu_{h}N^{0}_{h}\left[ (\mathcal R^{2}_0-1)k_3k_4 -k_{2})\right] \\
&\Longleftrightarrow d_1<k_{3}k_{4}k_{8}\mu_{h}\left[ (\mathcal R^{2}_0-1)k_3k_4 -k_{2})\right]<0,\,{\rm since}\, \mathcal R_0<1.\\
\end{split}
\end{equation}
}

Consequently, we show that $d^{2}_1-4d_2d_0=0$ is equivalent to,
\begin{equation}
\label{Delta_Rar3opc} \rho_2R^{4}_0+\rho_1R^{2}_0+\rho_0=0,
\end{equation}
where
\begin{subequations}
	\begin{align}
	\rho_2 &=k_{3}^4k_{4}^4k_{8}^2\mu_{h}^2 , \\
	\label{B0}
	\rho_1 &=2k_{3}^2k_{4}^2k_{8}\mu_{h}^2
	\left[k_{2}(k_{10}a\mu_{h}\beta_{vh}-k_{8}\delta\gamma_h)
	-(k_{10}a\mu_{h}\beta_{vh}+k_{8}k_{2})\delta\gamma_h\right] , \\
	\rho_0 &=k_{3}^2k_{4}^2k_{10}^2a^2\mu_{h}^4\beta_{vh}^2 .
	\end{align}
\end{subequations}

We again use Descartes' rule of signs to analyse equation \eqref{Delta_Rar3opc}. The discriminant of \eqref{Delta_Rar3opc} is $\Delta_r=\rho^{2}_1-4\rho_2\rho_0$, and can be written
\begin{equation*}
\Delta_r=-16k_{3}^4k_{4}^4k_{8}^2k_{2}\mu_{h}^4\delta\gamma_h
\left(k_{10}a\mu_{h}\beta_{vh}+k_{8}k_{2}\right)\left(k_{10}a\mu_{h}\beta_{vh}
-k_{8}\delta\gamma_h\right)
\end{equation*}

Since $\rho_2>0$ and $\rho_0>0$, equation \eqref{Delta_Rar3opc} allows real positive solutions if and only if $\rho_1<0$ and $\Delta_r\geq 0$. Now, we express the obtained inequalities in terms of the quantities \eqref{psiar3opc}--\eqref{R_2bar3opc}. To this aim, we note that $\Delta_r\geq 0$ is equivalent to $\psi\leq 0$. From the definition of $\psi$ \eqref{psiar3opc} and $\rho_1$ \eqref{B0}, this implies that $\rho_1<0$. Therefore, equation \eqref{Delta_Rar3opc} has exactly two positive solutions %if and only if $\Delta>0$, i.e ($\psi<0$),
given by \eqref{R_1bar3opc} and \eqref{R_2bar3opc}. Therefore, statement (iii) \emph{b}) follows from statement (\emph{b}) above. 

Similarly, $d^{2}_1-4d_2d_0>0$, with $d_1>0$ and $d_0 < 0$ written in terms of the
basic reproduction number, is equivalent to
\begin{equation}
\label{conditionOf2EEar3opc} \mathcal R_0<\mathcal R_{1b} \quad \text{ or } \quad \mathcal R_0>\mathcal R_{2b},
\end{equation}
so statement (iii) \emph{a}) follows from statement (\emph{c}) above. 

Finally, statement (iii) \emph{c}) then follows from statement (\emph{d}) above.  Thus Theorem \ref{EEBasic} is established.\hfill
\end{proof}

\section{Derivation of formula \eqref{ccsBasicModel2} }
\label{ccBasicModeldetails}
The local bifurcation analysis near the bifurcation point ($\beta_{hv}=\beta^{*}_{hv}$) is then determined by the signs of two associated constants, denoted by $\mathcal{A}_1$ and $\mathcal{A}_2$, defined by
\begin{equation}
%\label{ccsBasicModel}
\mathcal{A}_1=\sum\limits_{k,i,j=1}^{10}v_kw_iw_j\dfrac{\partial^{2}f_k(0,0)}{\partial x_i\partial x_j}\qquad and\qquad
\mathcal{A}_2=\sum\limits_{k,i=1}^{10}v_kw_i\dfrac{\partial^{2}f_k(0,0)}{\partial x_i\partial \phi}
\end{equation}
with $\phi=\beta_{hv}-\beta^{*}_{hv}$. It is important to note that in $f_k(0,0)$, the first zero corresponds to the disease--free equilibrium, $\mathcal{E}_{1}$, for the system \eqref{BasicModel}. Since $\beta_{hv}=\beta^{*}_{hv}$ is the bifurcation parameter, it follows from $\phi=\beta_{hv}-\beta^{*}_{hv}$ that $\phi=0$ when $\beta_{hv}=\beta^{*}_{hv}$ which is the second component in $f_k(0,0)$.

Using Eqs.~\eqref{LeftVectorBasicModel} and \eqref{RightVectorBasicModel} in Eq.~\eqref{ccsBasicModel}, we obtain	
{\footnotesize
	\begin{equation}
	%\label{ccsBasicModel1}
	%\begin{array}{l}
	\mathcal{A}_1=v_2\sum\limits_{i,j=1}^{10}w_iw_j\dfrac{\partial^{2}f_2(0,0)}{\partial x_i\partial x_j}
	+v_6\sum\limits_{i,j=1}^{10}w_iw_j\dfrac{\partial^{2}f_6(0,0)}{\partial x_i\partial x_j}\quad and \quad
	\mathcal{A}_2=v_2\sum\limits_{i=1}^{10}w_i\dfrac{\partial^{2}f_2(0,0)}{\partial x_i\partial \phi}.
	%\end{array}
	\end{equation}
}
Let
{\footnotesize
	\[
	\begin{split}
	\mathcal{A}^{(1)}_1&=\sum\limits_{i,j=1}^{10}w_iw_j\dfrac{\partial^{2}f_2(0,0)}{\partial x_i\partial x_j}\\
	&=w_1\sum\limits_{i,j=1}^{10}w_j\dfrac{\partial^{2}f_2(0,0)}{\partial x_1\partial x_j}
	+w_2\sum\limits_{i,j=1}^{10}w_j\dfrac{\partial^{2}f_2(0,0)}{\partial x_2\partial x_j}
	+w_3\sum\limits_{i,j=1}^{10}w_j\dfrac{\partial^{2}f_2(0,0)}{\partial x_3\partial x_j}
	+w_4\sum\limits_{i,j=1}^{10}w_j\dfrac{\partial^{2}f_2(0,0)}{\partial x_4\partial x_j}\\
	&+w_5\sum\limits_{i,j=1}^{10}w_j\dfrac{\partial^{2}f_2(0,0)}{\partial x_5\partial x_j}
	+w_6\sum\limits_{i,j=1}^{10}w_j\dfrac{\partial^{2}f_2(0,0)}{\partial x_6\partial x_j}
	+w_7\sum\limits_{i,j=1}^{10}w_j\dfrac{\partial^{2}f_2(0,0)}{\partial x_7\partial x_j}\\
	&=w_2\left(w_6\dfrac{\partial^{2}f_2}{\partial E_h\partial E_v}(0,0)
	+w_7\dfrac{\partial^{2}f_2}{\partial E_h\partial I_v}(0,0)\right) 
	+w_3\left(w_6\dfrac{\partial^{2}f_2}{\partial I_h\partial E_v}(0,0)
	+w_7\dfrac{\partial^{2}f_2}{\partial I_h\partial I_v}(0,0)\right)\\
	&+w_4\left(w_6\dfrac{\partial^{2}f_2}{\partial R_h\partial E_v}(0,0)
	+w_7\dfrac{\partial^{2}f_2}{\partial R_h\partial I_v}(0,0)\right)
	+w_6\left(w_2\dfrac{\partial^{2} f_2}{\partial E_v\partial E_h}
	+w_3\dfrac{\partial^{2} f_2}{\partial E_v\partial I_h}
	+w_4\dfrac{\partial^{2} f_2}{\partial E_v\partial R_h} \right) \\
	&+w_7\left(w_2\dfrac{\partial^{2} f_2}{\partial I_v\partial E_h}
	+w_3\dfrac{\partial^{2} f_2}{\partial I_v\partial I_h}
	+w_4\dfrac{\partial^{2} f_2}{\partial I_v\partial R_h} \right)\\
	&=w_2\left(-\dfrac{a\beta^{*}_{hv}\eta_v}{N^{0}_h}w_6
	-\dfrac{a\beta^{*}_{hv}}{N^{0}_h}w_7\right) 
	+w_3\left(-\dfrac{a\beta^{*}_{hv}\eta_v}{N^{0}_h}w_6
	-\dfrac{a\beta^{*}_{hv}}{N^{0}_h}w_7\right)\\
	&+w_4\left(-\dfrac{a\beta^{*}_{hv}\eta_v}{N^{0}_h}w_6
	-\dfrac{a\beta^{*}_{hv}}{N^{0}_h}w_7\right)
	+w_6\left(-\dfrac{a\beta^{*}_{hv}\eta_v}{N^{0}_h}w_2
	-\dfrac{a\beta^{*}_{hv}\eta_v}{N^{0}_h}w_3
	-\dfrac{a\beta^{*}_{hv}\eta_v}{N^{0}_h}w_4 \right) \\
	&+w_7\left(-\dfrac{a\beta^{*}_{hv}}{N^{0}_h}w_2
	-\dfrac{a\beta^{*}_{hv}}{N^{0}_h}w_3
	-\dfrac{a\beta^{*}_{hv}}{N^{0}_h}w_4\right)\\
	&=-\dfrac{a\beta^{*}_{hv}}{N^{0}_h}\left(\eta_vw_6+w_7\right)(w_2+w_3+w_4)
	-\dfrac{a\beta^{*}_{hv}}{N^{0}_h}\left(w_2+w_3+w_4 \right)(\eta_vw_6+w_7)\\
	&=-2\dfrac{a\beta^{*}_{hv}}{N^{0}_h}\left(\eta_vw_6+w_7\right)(w_2+w_3+w_4),
	\end{split}
	\]
}
and
{\scriptsize
	
	\[
	\begin{split}
	\mathcal{A}^{(2)}_1&=\sum\limits_{i,j=1}^{10}w_iw_j\dfrac{\partial^{2}f_6(0,0)}{\partial x_1\partial x_j}\\
	&=w_1\sum\limits_{i,j=1}^{10}w_j\dfrac{\partial^{2}f_6(0,0)}{\partial x_1\partial x_j}
	+w_2\sum\limits_{i,j=1}^{10}w_j\dfrac{\partial^{2}f_6(0,0)}{\partial x_2\partial x_j}
	+w_3\sum\limits_{i,j=1}^{10}w_j\dfrac{\partial^{2}f_6(0,0)}{\partial x_3\partial x_j}
	+w_4\sum\limits_{i,j=1}^{10}w_j\dfrac{\partial^{2}f_6(0,0)}{\partial x_4\partial x_j}\\
	&+w_5\sum\limits_{i,j=1}^{10}w_j\dfrac{\partial^{2}f_6(0,0)}{\partial x_5\partial x_j}
	+w_6\sum\limits_{i,j=1}^{10}w_j\dfrac{\partial^{2}f_6(0,0)}{\partial x_6\partial x_j}
	+w_7\sum\limits_{i,j=1}^{10}w_j\dfrac{\partial^{2}f_6(0,0)}{\partial x_7\partial x_j}\\
	&=w_1\left(w_2\dfrac{\partial f_6}{\partial S_h\partial E_h}(0,0)+ 
	w_3\dfrac{\partial f_6}{\partial S_h\partial I_h}(0,0)\right) \\
	&+w_2\left(w_1\dfrac{\partial f_6}{\partial E_h\partial S_h}(0,0)+ 
	w_2\dfrac{\partial f_6}{\partial E^{2}_h}(0,0)
	+w_3\dfrac{\partial f_6}{\partial E_h\partial I_h}(0,0)
	+w_4\dfrac{\partial f_6}{\partial E_h\partial R_h}(0,0)
	+w_5\dfrac{\partial f_6}{\partial E_h\partial S_v}(0,0)\right)\\
	&+w_3\left(w_1\dfrac{\partial f_6}{\partial I_h\partial S_h}(0,0)+ 
	w_2\dfrac{\partial f_6}{\partial I_h\partial E_h}(0,0)
	+w_3\dfrac{\partial f_6}{\partial I^{2}_h}(0,0)
	+w_4\dfrac{\partial f_6}{\partial I_h\partial R_h}(0,0)
	+w_5\dfrac{\partial f_6}{\partial I_h\partial S_v}(0,0)\right)\\
	&+w_4\left(w_2\dfrac{\partial f_6}{\partial R_h\partial E_h}(0,0)+ 
	w_3\dfrac{\partial f_6}{\partial R_h\partial I_h}(0,0)\right)
	+w_5\left(w_2\dfrac{\partial f_6}{\partial S_v\partial E_h}(0,0)+ 
	w_3\dfrac{\partial f_6}{\partial S_v\partial I_h}(0,0)\right)\\
	&=w_1\left(-\dfrac{a\beta_{vh}\eta_hS^{0}_v}{(N^{0}_h)^{2}}w_2
	-\dfrac{a\beta_{vh}S^{0}_v}{(N^{0}_h)^{2}}w_3\right) \\
	&+w_2\left(-\dfrac{a\beta_{vh}\eta_hS^{0}_v}{(N^{0}_h)^{2}}w_1
	-2\dfrac{a\beta_{vh}\eta_hS^{0}_v}{(N^{0}_h)^{2}}w_2
	-\dfrac{a\beta_{vh}S^{0}_v}{(N^{0}_h)^{2}}(\eta_h+1)w_3
	-\dfrac{a\beta_{vh}\eta_hS^{0}_v}{(N^{0}_h)^{2}}w_4
	+\dfrac{a\beta_{vh}\eta_h}{N^{0}_h}w_5\right)\\
	&+w_3\left(-\dfrac{a\beta_{vh}S^{0}_v}{(N^{0}_h)^{2}}w_1
	-\dfrac{a\beta_{vh}S^{0}_v}{(N^{0}_h)^{2}}(\eta_h+1)w_2
	-2\dfrac{a\beta_{vh}\eta_hS^{0}_v}{(N^{0}_h)^{2}}w_3
	-\dfrac{a\beta_{vh}S^{0}_v}{(N^{0}_h)^{2}}w_4
	+\dfrac{a\beta_{vh}}{N^{0}_h}w_5\right)\\
	&+w_4\left(-\dfrac{a\beta_{vh}\eta_hS^{0}_v}{(N^{0}_h)^{2}}w_2 
	-\dfrac{a\beta_{vh}S^{0}_v}{(N^{0}_h)^{2}}w_3\right)
	+w_5\left(\dfrac{a\beta_{vh}\eta_h}{N^{0}_h}w_2+ 
	\dfrac{a\beta_{vh}}{N^{0}_h}w_3\right)\\
	&=-\dfrac{a\beta_{vh}\eta_hS^{0}_v}{(N^{0}_h)^{2}}w_1w_2
	-\dfrac{a\beta_{vh}S^{0}_v}{(N^{0}_h)^{2}}w_1w_3\\
	&-\dfrac{a\beta_{vh}\eta_hS^{0}_v}{(N^{0}_h)^{2}}w_1w_2
	-2\dfrac{a\beta_{vh}\eta_hS^{0}_v}{(N^{0}_h)^{2}}w^{2}_2
	-\dfrac{a\beta_{vh}S^{0}_v}{(N^{0}_h)^{2}}(\eta_h+1)w_2w_3
	-\dfrac{a\beta_{vh}\eta_hS^{0}_v}{(N^{0}_h)^{2}}w_2w_4
	+\dfrac{a\beta_{vh}\eta_h}{N^{0}_h}w_2w_5\\
	&-\dfrac{a\beta_{vh}S^{0}_v}{(N^{0}_h)^{2}}w_1w_3
	-\dfrac{a\beta_{vh}S^{0}_v}{(N^{0}_h)^{2}}(\eta_h+1)w_2w_3
	-2\dfrac{a\beta_{vh}\eta_hS^{0}_v}{(N^{0}_h)^{2}}w^{2}_3
	-\dfrac{a\beta_{vh}S^{0}_v}{(N^{0}_h)^{2}}w_3w_4
	+\dfrac{a\beta_{vh}}{N^{0}_h}w_3w_5\\
	&-\dfrac{a\beta_{vh}\eta_hS^{0}_v}{(N^{0}_h)^{2}}w_2w_4
	-\dfrac{a\beta_{vh}S^{0}_v}{(N^{0}_h)^{2}}w_3w_4
	+\dfrac{a\beta_{vh}\eta_h}{N^{0}_h}w_2w_5
	+\dfrac{a\beta_{vh}}{N^{0}_h}w_3w_5\\
	&=-2\dfrac{a\beta_{vh}\eta_hS^{0}_v}{(N^{0}_h)^{2}}w_1w_2
	-2\dfrac{a\beta_{vh}S^{0}_v}{(N^{0}_h)^{2}}w_1w_3
	-2\dfrac{a\beta_{vh}\eta_hS^{0}_v}{(N^{0}_h)^{2}}w^{2}_2
	-2\dfrac{a\beta_{vh}S^{0}_v}{(N^{0}_h)^{2}}(\eta_h+1)w_2w_3
	-2\dfrac{a\beta_{vh}\eta_hS^{0}_v}{(N^{0}_h)^{2}}w_2w_4\\
	&-2\dfrac{a\beta_{vh}\eta_hS^{0}_v}{(N^{0}_h)^{2}}w^{2}_3
	-2\dfrac{a\beta_{vh}S^{0}_v}{(N^{0}_h)^{2}}w_3w_4
	+2\dfrac{a\beta_{vh}\eta_h}{N^{0}_h}w_2w_5
	+2\dfrac{a\beta_{vh}}{N^{0}_h}w_3w_5\\
	&=-2\dfrac{a\beta_{vh}\eta_hS^{0}_v}{(N^{0}_h)^{2}}w^{2}_2
	-2\dfrac{a\beta_{vh}S^{0}_v}{(N^{0}_h)^{2}}(\eta_h+1)w_2w_3
	-2\dfrac{a\beta_{vh}\eta_hS^{0}_v}{(N^{0}_h)^{2}}w_2w_4
	-2\dfrac{a\beta_{vh}\eta_hS^{0}_v}{(N^{0}_h)^{2}}w^{2}_3
	-2\dfrac{a\beta_{vh}S^{0}_v}{(N^{0}_h)^{2}}w_3w_4\\
	&+2\dfrac{a\beta_{vh}}{N^{0}_h}\eta_hw_2w_5
	+2\dfrac{a\beta_{vh}}{N^{0}_h}w_3w_5
	-2\dfrac{a\beta_{vh}S^{0}_v}{(N^{0}_h)^{2}}\eta_hw_1w_2
	-2\dfrac{a\beta_{vh}S^{0}_v}{(N^{0}_h)^{2}}w_1w_3\\
	&=-2\dfrac{a\beta_{vh}\eta_hS^{0}_v}{(N^{0}_h)^{2}}w^{2}_2
	-2\dfrac{a\beta_{vh}S^{0}_v}{(N^{0}_h)^{2}}(\eta_h+1)w_2w_3
	-2\dfrac{a\beta_{vh}\eta_hS^{0}_v}{(N^{0}_h)^{2}}w_2w_4
	-2\dfrac{a\beta_{vh}\eta_hS^{0}_v}{(N^{0}_h)^{2}}w^{2}_3
	-2\dfrac{a\beta_{vh}S^{0}_v}{(N^{0}_h)^{2}}w_3w_4\\
	&+2\dfrac{a\beta_{vh}}{N^{0}_h}\left( \eta_hw_2+w_3\right)w_5 
	-2\dfrac{a\beta_{vh}S^{0}_v}{(N^{0}_h)^{2}}\eta_hw_1w_2
	-2\dfrac{a\beta_{vh}S^{0}_v}{(N^{0}_h)^{2}}w_1w_3\\
	&=-2\dfrac{a\beta_{vh}\eta_hS^{0}_v}{(N^{0}_h)^{2}}w^{2}_2
	-2\dfrac{a\beta_{vh}S^{0}_v}{(N^{0}_h)^{2}}(\eta_h+1)w_2w_3
	-2\dfrac{a\beta_{vh}\eta_hS^{0}_v}{(N^{0}_h)^{2}}w_2w_4
	-2\dfrac{a\beta_{vh}\eta_hS^{0}_v}{(N^{0}_h)^{2}}w^{2}_3
	-2\dfrac{a\beta_{vh}S^{0}_v}{(N^{0}_h)^{2}}w_3w_4\\
	&+2\dfrac{a\beta_{vh}}{N^{0}_h}\left( \eta_hw_2+w_3\right)
	\left(-\dfrac{k_8+\gamma_v}{\gamma_v}w_7+\dfrac{k_7K_2K_4}{lK_1K_3}w_{10}\right) 
	-2\dfrac{a\beta_{vh}S^{0}_v}{(N^{0}_h)^{2}}\eta_hw_1w_2
	-2\dfrac{a\beta_{vh}S^{0}_v}{(N^{0}_h)^{2}}w_1w_3\\
	&=-2\dfrac{a\beta_{vh}S^{0}_v}{(N^{0}_h)^{2}}\left( \eta_hw^{2}_2
	+(\eta_h+1)w_2w_3+\eta_hw_2w_4+\eta_hw^{2}_3+w_3w_4\right)
	-2\dfrac{a\beta_{vh}(k_8+\gamma_v)}{\gamma_vN^{0}_h}\left( \eta_hw_2+w_3\right)w_7 \\
	&+2\dfrac{k_7K_2K_4}{lK_1K_3}\dfrac{a\beta_{vh}}{N^{0}_h}\left( \eta_hw_2+w_3\right)w_{10} 
	-2\dfrac{a\beta_{vh}S^{0}_v}{(N^{0}_h)^{2}}\left( \eta_hw_2+w_3\right)w_1 \\
	\end{split}
	\]
}
It follows then, after some algebraic computations, that	

{\footnotesize
	\begin{equation}
	%\label{ccsBasicModel2}
	\begin{split}
	\mathcal{A}_1&=v_2\left\lbrace -2\dfrac{a\beta^{*}_{hv}}{N^{0}_h}\left(\eta_vw_6+w_7\right)(w_2+w_3+w_4)\right\rbrace  \\
	&+v_6\left\lbrace -2\dfrac{a\beta_{vh}S^{0}_v}{(N^{0}_h)^{2}}\left( \eta_hw^{2}_2
	+(\eta_h+1)w_2w_3+\eta_hw_2w_4+\eta_hw^{2}_3+w_3w_4\right)
	-2\dfrac{a\beta_{vh}(k_8+\gamma_v)}{\gamma_vN^{0}_h}\left( \eta_hw_2+w_3\right)w_7\right.\\ 
	&\left.\qquad\qquad+2\dfrac{k_7K_2K_4}{lK_1K_3}\dfrac{a\beta_{vh}}{N^{0}_h}\left( \eta_hw_2+w_3\right)w_{10} 
	-2\dfrac{a\beta_{vh}S^{0}_v}{(N^{0}_h)^{2}}\left( \eta_hw_2+w_3\right)w_1\right\rbrace\\
	&=v_2\left\lbrace -2\dfrac{a\beta^{*}_{hv}}{N^{0}_h}\left(\eta_vw_6+w_7\right)(w_2+w_3+w_4)\right\rbrace  \\
	&+v_6\left\lbrace -2\dfrac{a\beta_{vh}S^{0}_v}{(N^{0}_h)^{2}}\left( \eta_hw^{2}_2
	+(\eta_h+1)w_2w_3+\eta_hw_2w_4+\eta_hw^{2}_3+w_3w_4\right)
	-2\dfrac{a\beta_{vh}(k_8+\gamma_v)}{\gamma_vN^{0}_h}\left( \eta_hw_2+w_3\right)w_7\right\rbrace \\ 
	&+v_6\left\lbrace 2\dfrac{k_7K_2K_4}{lK_1K_3}\dfrac{a\beta_{vh}}{N^{0}_h}\left( \eta_hw_2+w_3\right)w_{10} 
	-2\dfrac{a\beta_{vh}S^{0}_v}{(N^{0}_h)^{2}}\left( \eta_hw_2+w_3\right)w_1\right\rbrace \\
	&=v_6\left\lbrace 2\dfrac{k_7K_2K_4}{lK_1K_3}\dfrac{a\beta_{vh}}{N^{0}_h}\left( \eta_hw_2+w_3\right)w_{10} 
	-2\dfrac{a\beta_{vh}S^{0}_v}{(N^{0}_h)^{2}}\left( \eta_hw_2+w_3\right)w_1\right\rbrace \\
	&+v_2\left\lbrace -2\dfrac{a\beta^{*}_{hv}}{N^{0}_h}\left(\eta_vw_6+w_7\right)(w_2+w_3+w_4)\right\rbrace  \\
	&+v_6\left\lbrace -2\dfrac{a\beta_{vh}S^{0}_v}{(N^{0}_h)^{2}}\left( \eta_hw^{2}_2
	+(\eta_h+1)w_2w_3+\eta_hw_2w_4+\eta_hw^{2}_3+w_3w_4\right)
	-2\dfrac{a\beta_{vh}(k_8+\gamma_v)}{\gamma_vN^{0}_h}\left( \eta_hw_2+w_3\right)w_7\right\rbrace\\
	&=v_6\left\lbrace 2\dfrac{k_7K_2K_4}{lK_1K_3}\dfrac{a\beta_{vh}}{N^{0}_h}\left( \eta_hw_2+w_3\right)w_{10} 
	-2\dfrac{a\beta_{vh}S^{0}_v}{(N^{0}_h)^{2}}\left( \eta_hw_2+w_3\right)w_1\right\rbrace \\
	&-v_2\left\lbrace 2\dfrac{a\beta^{*}_{hv}}{N^{0}_h}\left(\eta_vw_6+w_7\right)(w_2+w_3+w_4)\right\rbrace  \\
	&-v_6\left\lbrace 2\dfrac{a\beta_{vh}S^{0}_v}{(N^{0}_h)^{2}}\left( \eta_hw^{2}_2
	+(\eta_h+1)w_2w_3+\eta_hw_2w_4+\eta_hw^{2}_3+w_3w_4\right)
	+2\dfrac{a\beta_{vh}(k_8+\gamma_v)}{\gamma_vN^{0}_h}\left( \eta_hw_2+w_3\right)w_7\right\rbrace\\
	&=\zeta_1-\zeta_2, \\ 
	\end{split}
	\end{equation}
}	
where we have set
\begin{equation}
\label{}
\begin{array}{l}
\zeta_1=v_6\left\lbrace 2\dfrac{k_7K_2K_4}{lK_1K_3}\dfrac{a\beta_{vh}}{N^{0}_h}\left( \eta_hw_2+w_3\right)w_{10} 
-2\dfrac{a\beta_{vh}S^{0}_v}{(N^{0}_h)^{2}}\left( \eta_hw_2+w_3\right)w_1\right\rbrace,\\
\begin{split}
\zeta_2&=v_2\left\lbrace 2\dfrac{a\beta^{*}_{hv}}{N^{0}_h}\left(\eta_vw_6+w_7\right)(w_2+w_3+w_4)\right\rbrace  \\
&+v_6\left\lbrace 2\dfrac{a\beta_{vh}S^{0}_v}{(N^{0}_h)^{2}}\left( \eta_hw^{2}_2
+(\eta_h+1)w_2w_3+\eta_hw_2w_4+\eta_hw^{2}_3+w_3w_4\right)\right.\\
&\left.\qquad\qquad 2\dfrac{a\beta_{vh}(k_8+\gamma_v)}{\gamma_vN^{0}_h}\left( \eta_hw_2+w_3\right)w_7\right\rbrace.\\
\end{split}
\end{array}
\end{equation}
According to \eqref{LeftVectorBasicModel} and \eqref{RightVectorBasicModel}, we have $\zeta_1>0$ and $\zeta_2>0$.

We then have
\[
\begin{split}
\mathcal{A}_2&=v_2\sum\limits_{i=1}^{10}w_i\dfrac{\partial^{2}f_2(0,0)}{\partial x_i\partial \phi}\\
&=v_2\left(w_6\dfrac{\partial^{2}f_2}{\partial E_v\partial \phi}(0,0)
+w_7\dfrac{\partial^{2}f_2}{\partial E_v\partial \phi}(0,0)\right) \\
&=v_2\left(\eta_hw_6\dfrac{aS^{0}_v}{N^{0}_h}
+w_7\dfrac{aS^{0}_v}{N^{0}_h}\right) \\
&=\dfrac{aS^{0}_v}{N^{0}_h}\left(\eta_hw_6+w_7\right)v_2.
\end{split}
\]

\section{Proof of Theorem~\ref{sdlenodeBasic}}
\label{sadleAr3opc}
We follow the approach given in~\cite{saetal}. At this aim, note that equation~\eqref{eqEEsansBasic} may be written as
\begin{equation}
\label{cvsn23Basic}
F(\beta_{hv},\lambda_{h}):=d_2(\lambda^{*}_h)^{2}+d_1\lambda^{*}_h+d_0=0,
\end{equation}
where $d_2$, $d_1$ and $d_0$ are the same coefficients as in \eqref{eqEEsansBasic}.
Thus, the positive endemic equilibria of model~\eqref{BasicModel} are obtained by solving (\ref{cvsn23Basic}) for positive $\lambda^{*}_{h}$ and substituting the results into \eqref{EEhBasic}. Clearly, the coefficient $d_2$, of \eqref{cvsn23Basic}, is always negative while $d_1$ and $d_0$ may change sign. Therefore, there is a single endemic equilibrium if and only if $d_0>0$, which correspond to $\mathcal R_0>1$. There are two endemic  equilibria if and only if $d_0<0$, $d_1>0$ and $d^{2}_1-4d_2d_0>0$.\\
Now, first remember that $d_0<0$ (i. e. $\mathcal R_0<1$) is equivalent to $\beta_{hv}<\beta^{*}_{hv}$, where $\beta^{*}_{hv}$ is given at Eq.~\eqref{bifparam}.

Then, inequality  $d_1>0$, is equivalent to
\begin{equation}
\label{betacrit1Basic}
\beta_{hv}>\bar{\beta}%=\dfrac{\left[a\mu_h\beta_{vh}(\eta_hk_4+\gamma_h)+2k_8(k_3k_4-\delta\gamma_h) \right]N^{0}_h }{a^2\beta_{vh}(\eta_hk_4+\gamma_h)(\eta_vk_8+\gamma_v)N^{0}_v}.
\end{equation}
where $\bar{\beta}$ is given by \eqref{betacrit1Basic1}.

Finally, equation  $d^{2}_1-4d_2d_0=0$, in terms of $\beta_{hv}$, is equivalent to
\begin{equation}
\label{f3Basic}
\alpha_2\beta^{2}_{hv}+\alpha_1\beta_{hv}+\alpha_0=0,
\end{equation}
where $\alpha_2=k_{3}^2k_{4}^2k_{10}^2k_{11}^2a^4\mu_{h}^2(N^{0}_{v})^2\beta_{vh}^2$,
$
\alpha_0=k_{3}^2\,k_{4}^2\,k_{10}^2\,a^2\,\mu_{h}^4\,(N^{0}_{h})^2\beta_{vh}^2,
$
and\\
$
\alpha_1=2k_{3}k_{4}k_{10}k_{11}a^2\mu_{h}^2N^{0}_{h}N^{0}_{v}\beta_{vh}
\left[2k_{2}(k_{10}a\mu_{h}\beta_{vh}-k_{8}\delta\gamma_h)-k_{3}k_{4}k_{10}a\mu_{h}\beta_{vh}
\right].
$\\
%with $k_{2}=k_3k_4-\delta\gamma_h$, $k_{10}=\eta_hk_4+\gamma_h$ and $k_{11}=\eta_vk_8+\gamma_v$.

Now we compute the discriminant $\Delta:=\alpha^{2}_{1}-4\alpha_2\alpha_0$, to obtain:
\[
\begin{split}
\Delta=-16k_{3}^2k_{4}^2k_{2}
k_{10}^2k_{11}^2\delta\gamma_ha^4\mu_{h}^4(N^{0}_{h})^2(N^{0}_{v})^2\beta_{vh}^2
\left(k_{10}a\mu_{h}\beta_{vh}+k_{8}k_{2}\right)\left(k
_{10}a\mu_{h}\beta_{vh}-\delta\gamma_hk_{8}\right).
\end{split}
\]
Equation \eqref{f3Basic} admits a real solution if and only if $\Delta\geq 0$. This condition is equivalent to
\begin{equation}
\label{f5Basic}
\psi:=k_{10}a\mu_{h}\beta_{vh}-\delta\gamma_hk_{8}\leq 0
\end{equation}
Under condition \eqref{f5Basic}, we conclude that $\alpha_1<0$.  Thus, Eq.~\eqref{f3Basic} admits exactly two positive solutions which are given by
%\tiny
\[
\begin{split}
\beta_{\pm}&=\dfrac{-\alpha_1\pm\sqrt{\Delta}}{2\alpha_2}\\
&=\dfrac{N^{0}_{h}}{k_{3}k_{4}k_{10}k_{11}a^2N^{0}_{v}\beta_{vh}}
\left\lbrace \sqrt{\delta\gamma_h(a\mu_{h}\beta_{vh}k_{10}+k_{2}k_{8})}
\pm\sqrt{(-k_{2}\psi)}\right\rbrace^{2} 
\end{split}
\]
Thus, condition $d^{2}_1-4d_2d_0>0$ written in the terms of $\beta_{hv}$ is equivalent to
\[
\beta_{hv}<\beta_{-}\quad or\quad \beta_{hv}>\beta_{+}.
\]
and the inequalities \eqref{Betasn2Basic} then follow.

\begin{remark}
	Note that condition \eqref{Betasn2Basic} is equivalent to condition \eqref{conditionOf2EEar3opc}. Therefore, Theorem~\ref{sdlenodeBasic} is equivalent to Theorem~\ref{EEBasic}.
\end{remark}

\section{Proof of Theorem~\ref{BasicdeltaNULL}}
\label{appBasic4}

Considering the model \eqref{BasicModel} without disease--induced death in human, and applying the same procedure as appendix~\ref{EEBasic}, we obtain that the non-zero equilibria of the basic model~\eqref{BasicModel} satisfy the linear	equation
\[
(s\Gamma_E + k_6\Gamma_L )(p_1\lambda^{*}_h+p_0)=0, 
\]
where $p_1=\mu_{b}\Lambda_{h}k_{9}\left(k_{2}a\mu_{h}\beta_{vh}
+k_{3}k_{8}(\mu_h+\sigma)\right) $ and 	$p_0=-\mu_{h}k_{3}k_{4}k_{8}k_{9}\mu_{b}\Lambda_{h}
\left(\mathcal R^{2}_{0,\delta=0}-1\right)$.

Clearly, $p_1>0$ and $p_0\geq 0$ whenever $R_{0,\delta=0}\leq 1$, so that $\lambda^{*}_h=-\dfrac{p_0}{p_1}\leq 0$. Therefore, the model~\eqref{BasicModel} without disease--induced death in human, has no endemic equilibrium whenever $\mathcal R_{0,\delta=0}\leq 1$.
The above result suggests the impossibility of backward bifurcation in the model \eqref{BasicModel} without disease--induced death, since no endemic equilibrium exists when $\mathcal R_{0,\delta=0}<1$ (and backward bifurcation requires the presence of at least two endemic equilibria when $\mathcal R_{0,\delta=0}<1$)~\cite{gaguab,SharomietAl2007}. 

To completely rule out backward bifurcation in model~\eqref{BasicModel}, we use the direct Lyapunov method to prove the global stability of the DFE.
%\paragraph{\underline{\bf Global stability of the DFE of \eqref{AR3withoutVac} for $\delta=0$}.}$\;$
Define the positively-invariant and attracting region
%{\footnotesize
\[
\begin{split}
\mathcal{D}_{1}&=\left\lbrace
(S_h,E_h,I_h,R_h,S_v,E_v,I_v,E,L,P)\in\mathcal{D}:
S_h\leq N^{0}_h; S_v\leq N^{0}_v\right\rbrace.\\
\end{split}
\]
Consider the Lyapunov function
\[
\mathcal{G}=q_1E_h+q_2I_h+q_3E_v+q_4I_v,
\]
where
\[
\begin{array}{l}
q_1=\dfrac{1}{k_3};\,\,
q_3=\dfrac{\tau_1S^{0}_h}{k_3k_8}\dfrac{k_{11}}{k_9},\;\;
q_2=\dfrac{\tau_1S^{0}_h}{k_3k_8}\dfrac{k_{11}\zeta_2S^{0}_v}{k_4k_9},\;\;
q_4=\dfrac{\tau_1S^{0}_h}{k_3k_8}.
\end{array}
\]
and we have set $\tau_1=\dfrac{\mu_h\beta_{hv}}{\Lambda_h}$ and $\tau_2=\dfrac{\mu_h\beta_{vh}}{\Lambda_h}$.
The derivative of $\mathcal{G}$ is given by 
{\footnotesize
	\[
	\begin{split}
	\dot{\mathcal{G}}&=q_1\dot{E_h}+q_2\dot{I_h}+q_3\dot{E_v}+q_4\dot{I_v}\\
	&=q_1(\lambda_hS_h-k_3E_h)+q_2(\gamma_hE_h-k_4I_h)+q_3(\lambda_vS_v-k_9E_v)+q_4(\gamma_vE_v-k_8I_v)\\
	&=q_1\tau_1S_h(\eta_vE_v+I_v)-q_3k_9E_v+q_4\gamma_vE_v-q_4k_8I_v
	+q_3\tau_2S_v(\eta_hE_h+I_h)-q_1k_3E_h+q_2\gamma_hE_h-q_2k_4I_h\\
	&=(q_1\tau_1S_h\eta_v+q_4\gamma_v-q_3k_9)E_v+(q_1\tau_1S_h-q_4k_8)I_v
	+(q_3\tau_2S_v\eta_h+q_2\gamma_h-q_1k_3)E_h+(q_3\tau_2S_v-q_2k_4)I_h\\
	&\leq (q_1\tau_1S^{0}_h\eta_v+q_4\gamma_v-q_3k_9)E_v+(q_1\tau_1S^{0}_h-q_4k_8)I_v
	+(q_3\tau_2S^{0}_v\eta_h+q_2\gamma_h-q_1k_3)E_h+(q_3\zeta_2S^{0}_v-q_2k_4)I_h,\\
	&\,\, \text{since}\,\,
	S_h\leq S^{0}_h,\,\,S_v\leq S^{0}_v\,\,\, in\,\,\,\mathcal{D}_1.\\
	\end{split}
	\]
}
Replacing $q_i$, $i=1,\hdots,4$, by their value gives after straightforward simplifications
\[
\begin{split}
\dot{\mathcal{G}}
&\leq \left(\mathcal R^{2}_{0,\delta=0}-1\right) E_h\\
\end{split}
\]\normalsize
We have $\dot{\mathcal{G}}\leq 0$ if $\mathcal R_{0,\delta=0}\leq 1$, with $\dot{\mathcal{G}}=0$ if $\mathcal{R}_{0,\delta=0}=1$ or $E_h=0$. Whenever $E_h=0$, we also have $I_h=0$, $E_v=0$ and $I_v=0$. Substituting  $E_h=I_h=E_v=I_v=0$ in the first, fourth and fifth equation of Eq.~\eqref{BasicModel} with $\delta=0$ gives $S_h(t)\rightarrow S^{0}_h=N^{0}_h$, $R_h(t)\rightarrow 0$, and $S_v(t)\rightarrow S^{0}_v=N^{0}_v$ as
$t\rightarrow \infty$. Thus 
\small
\[
\begin{split}
&\left[S_h(t),E_h(t),I_h(t),R_h(t),S_v(t),E_v(t),I_v(t),E(t),L(t),P(t)\right]
\rightarrow (N^{0}_h,0,0,0,N^{0}_v,0,0,E,L,P)\\
&\text{as} \,\,\, t\rightarrow \infty.
\end{split}
\]\normalsize
It follows from the LaSalle's invariance principle \cite{invariance,14,13}
that every solution of \eqref{BasicModel} (when $\mathcal{R}_{0,\delta=0}\leq 1$), with
initial conditions in $\mathcal{D}_1$ converges to $\mathcal{E}_{1}$, as
$t\rightarrow \infty$. Hence, the DFE, $\mathcal{E}_{1}$, of model \eqref{BasicModel} without disease--induced death, is GAS in $\mathcal{D}_1$  if $\mathcal{R}_{0,\delta=0}\leq 1$.

%\small
%\newpage
\bibliographystyle{siam}
\bibliography{mybibfileOPC}
%\end{linenumbers}
\end{document}